\documentclass[12pt,a4paper]{article}
 
\usepackage[latin1]{inputenc}
\usepackage[english]{babel}
\usepackage[T1]{fontenc}
\usepackage{amsmath} 
\usepackage{amsfonts}
\usepackage{amssymb}
\usepackage{amsthm}
\usepackage{ dsfont }
\usepackage{ stmaryrd }
\usepackage{tikz}
\usetikzlibrary{matrix,arrows.meta}
\usepackage[title]{appendix}
\usepackage{todonotes}

\allowdisplaybreaks

\usepackage{hyperref}

\usepackage{caption}

\usepackage{graphicx,xcolor} %for color
\usepackage[a4paper,top=2.1cm,bottom=2.60cm,left=2.1cm,right=2.1cm]{geometry} %toglierlo

\theoremstyle{plain}
\newtheorem{theorem}{Theorem}[section]
\newtheorem{lemma}[theorem]{Lemma}
\newtheorem{proposition}[theorem]{Proposition} 
\newtheorem{corollary}[theorem]{Corollary}

\theoremstyle{definition} 
\newtheorem{remark}[theorem]{Remark}
\newtheorem{definition}[theorem]{Definition}

\begin{document}

\title{Shape perturbation of Grushin eigenvalues}

\date{September 2, 2020 - 20200902\_shapegrushin}

\author{Pier Domenico Lamberti\thanks{Dipartimento di Matematica ``Tullio Levi Civita'', Universit\`a degli Studi di Padova, via Trieste 63, 35121 Padova , Italy. Email: lamberti@math.unipd.it}, Paolo Luzzini\thanks{Dipartimento di Matematica ``Tullio Levi Civita'', Universit\`a degli Studi di Padova, via Trieste 63, 35121 Padova , Italy. Email: pluzzini@math.unipd.it}, Paolo Musolino\thanks{Dipartimento di Scienze Molecolari e Nanosistemi, Universit\`a Ca' Foscari Venezia, via Torino 155, 30172 Venezia Mestre, Italy. Email: paolo.musolino@unive.it}}

\maketitle

\noindent
{\bf Abstract:} We consider the spectral problem for 
the Grushin Laplacian subject to homogeneous Dirichlet boundary conditions on a 
bounded open subset of $\mathbb{R}^N$.  We prove that the symmetric functions of the eigenvalues depend
real analytically upon domain perturbations and we prove an  Hadamard-type formula for their shape differential. 
In the case of perturbations depending on a single scalar parameter, we prove a Rellich-Nagy-type theorem which describes the bifurcation phenomenon of multiple eigenvalues.
As corollaries, we characterize the critical shapes under isovolumetric and isoperimetric perturbations in terms of overdetermined problems and we deduce a new proof of the Rellich-Pohozaev identity for the  Grushin eigenvalues.

\vspace{9pt}

\noindent
{\bf Keywords:}   Grushin operator; eigenvalues; domain perturbation; shape sensitivity analysis; real analyticity;  Hadamard formula; Rellich-Pohozaev identity\par
\vspace{9pt}

\noindent   
{{\bf 2010 Mathematics Subject Classification:}} 35J70; 35B20; 35P05; 47A10; 49K40

%\tableofcontents

\section{Introduction}

In this paper we consider the following degenerate elliptic operator in $\mathbb{R}^N$:
\begin{equation*}\label{Gdef}
\Delta_{G} := \Delta_x +|x|^{2s}\Delta_y, \qquad s \in \mathbb{N}. 
\end{equation*}
Here and throughout the paper  $N\in \mathbb{N}$, $N \geq 2$, $h, k \in \mathbb{N}$, 
$h+k=N$, $x \in \mathbb{R}^h$, $y \in \mathbb{R}^k$, where $\mathbb{N}$ denotes the set of positive integers. The vector $x$ denotes the first $h$ components 
of a vector $z \in \mathbb{R}^N$, and similarly $y$ denotes the last $k$ ones, {\it i.e.} $z = (x,y) \in \mathbb{R}^h\times\mathbb{R}^k=\mathbb{R}^N.$ 
By $\Delta_x$ and $\Delta_y$ we denote the standard Laplacians with respect to the $x$ and $y$ variables, respectively. 
The operator $\Delta_G$ is nowadays known as the Grushin Laplacian, and has been introduced in a preliminary version by Baouendi \cite{Ba67} and Grushin  \cite{Gr70, Gr71}.  In  \cite{Ba67}, Baouendi 
has studied the regularity of the solutions of a boundary value problem for an elliptic operator, whose coefficients may vanish on the boundary of the open set where the problem is considered. In  \cite{Gr70, Gr71}, Grushin has considered a class of operators that degenerate on a submanifold. Later on, a more general notion of the Grushin Laplacian has been introduced and studied by
Franchi and Lanconelli \cite{FrLa82, FrLa83, FrLa84}. 
In recent years, these operators have been studied under several points of view.  Here we mention just a few contributions, without the aim of completeness. For example, inequalities and estimates related to the Grushin operator have been investigated by many authors. D'Ambrosio \cite{Da04} has studied Hardy inequalities related to Grushin-type operators.   Garofalo and Shen \cite{GaSh94} have obtained Carleman estimates  and unique continuation for the Grushin operator. Symmetry, existence and uniqueness properties of extremal functions for the weighted Sobolev inequality are obtained in Monti \cite{Mo06}. Monticelli, Payne, and Punzo \cite{MoPaPu19} have obtained Poincaré inequalities for Sobolev spaces with matrix-valued weights with applications to the existence and uniqueness of solutions to linear elliptic and parabolic degenerate partial differential equations. Furthermore, several authors have investigated issues related to the solutions to problems for degenerate equations.  Kogoj and Lanconelli  have proved in \cite{KoLa09} a Liouville theorem for a class of linear degenerate elliptic operators, whereas in \cite{KoLa12} they have obtained some existence, nonexistence and regularity results for boundary value problems for semilinear degenerate equations. Monticelli \cite{Mo10}  has obtained a maximum principle for a class of linear degenerate elliptic differential operators of the second order. Thuy and Tri \cite{ThTr02, ThTr12} and Tri \cite{Tr98A, Tr09} have analyzed boundary value problems for linear or semilinear degenerate elliptic differential equations. Other references can be found in the survey of Kogoj and Lanconelli \cite{KoLa18}, where the authors have discussed linear and semilinear problems involving the $\Delta_{\lambda}$--Laplacians, which contain, as a particular case, the operator introduced by Baouendi and Grushin.

In our work we are interested in the eigenvalue 
problem
\begin{equation}\label{Geigp}
-\Delta_{G} u = \lambda  u,
\end{equation}
with  zero Dirichlet boundary conditions on a  variable bounded open subset $\Omega$ 
of $\mathbb{R}^N$.
It is well known that problem \eqref{Geigp} admits a divergent 
sequence of domain dependent eigenvalues of finite multiplicity:
\[
0 < \lambda_1[\Omega] \leq \cdots \leq \lambda_n[\Omega] \leq \cdots \to +\infty.
\]
Our main aim is to understand the dependence of the eigenvalues $\lambda_n[\Omega]$,  both simple and multiple, upon perturbation of the domain $\Omega$. In particular, 
we plan to extend the results of Lamberti and Lanza de Cristoforis    \cite{LaLa04} for 
the Laplacian and of Buoso and Lamberti \cite{BuLa13, BuLa15} for polyharmonic operators and systems
 to the case of the Grushin Laplacian $\Delta_G$.

Shape sensitivity analysis and shape optimization of quantities and functionals related to partial differential equations   are vast topics which have been investigated  by several authors with different techniques.  We 
 mention, for example,  the monographs by Bucur and Buttazzo \cite{BuBu05},  Daners \cite{Da08}, Delfour and Zol\'esio \cite{DeZo11}, Henrot \cite{He06}, Henrot and Pierre \cite{HePi05},  Novotny and Soko\l owski \cite{NoSo13},   Novotny, Soko\l owski, and \.{Z}ochowski \cite{NoSoZo19}, Pironneau \cite{Pi84}, and    Soko\l owski and Zol\'esio \cite{SoZo92}. One of the central problems concerns the 
 analysis of the dependence of the eigenvalues of partial differential operators upon domain perturbations. 
  Many authors studied the qualitative behavior of the eigenvalues 
 of various partial  differential operators with respect to shape perturbations proving, for example, continuity, smoothness or even analyticity results. In addition to the above monographs,
 we mention in this direction the works of  Arendt and Daners \cite{ArDa08}, Arrieta \cite{Ar95}, Arrieta and Carvalho
 \cite{ArCa04},  Buoso and Lamberti \cite{BuLa13, BuLa15},  Buoso and Provenzano \cite{BuPr15}, Fall and Weth \cite{FaWe19},
  Lamberti and Lanza de Cristoforis   \cite{LaLa04}, and Prodi \cite{Pr94}.  These issues are closely related to the shape optimization of  
  eigenvalues. Indeed, a first step towards the maximization or minimization of an eigenvalue under suitable constraints (such as fixed volume or perimeter) is to study critical shapes. Accordingly, a detailed analysis of the 
  regularity upon shape perturbations and of the shape differential is crucial for this kind of optimization problems. 
The problem of minimizing the first eigenvalue of the Dirichlet Laplacian has been solved by  
Faber \cite{Fa23} and Krahn  \cite{Kr24}, and later on other authors have generalized their result to different operators (see, 
{\it e.g.}, Ashbaugh and Benguria \cite{AsBe95} and Nadirashvili \cite{Na95}). However,  in general, finding the shapes which optimize a certain eigenvalue is a hard problem which remains open for several well-studied operators, including  the Grushin Laplacian.
  
Another point of view in spectral shape sensitivity analysis is proving quantitative stability estimates 
  for the eigenvalues in terms of some notion of vicinity of sets. For this topic, which is outside the scope of the present work, we refer to the survey of Burenkov, Lamberti and Lanza de Cristoforis \cite{BuLaLa06}.

This paper is in the spirit of studying the qualitative behavior of the eigenvalues of the Grushin Laplacian upon shape perturbations. Namely, in contrast with other approaches in the literature which  address only continuity and differentiability issues, in Theorem \ref{thm:realanalsym} we prove that the symmetric functions of the eigenvalues 
depend real analytically upon shape perturbations. We note that considering the symmetric functions of the eigenvalues, and not the eigenvalues themselves, is a  natural choice. Indeed, a perturbation of the domain can 
split a multiple eigenvalue into different eigenvalues of lower multiplicity and thus the corresponding
branches can have a corner at the splitting point.   Furthermore, we obtain the Grushin analog of the Hadamard formula for the shape differential (see formula \eqref{Nhadf1} of Theorem \ref{Nhadf}).  In the case of perturbations depending real 
analytically on a single scalar parameter $\varepsilon$, we prove a Rellich-Nagy-type theorem which describes 
the bifurcation phenomenon of multiple eigenvalues that we mentioned before. More precisely, given 
 an eigenvalue $\lambda$ of multiplicity $m$ on $\Omega$ and a family of  perturbations
$\{\phi_\varepsilon\}_{\varepsilon \in \mathbb{R}}$ of $\Omega$ depending real analytically on $\varepsilon$
and such that $\phi_0$ is the identity, our 
result guarantees that all the branches splitting from $\lambda$ at $\varepsilon = 0$ are described by $m$ real 
analytic functions of $\varepsilon$.  Moreover, the right derivatives at $\varepsilon = 0$ of the branches splitting 
from $\lambda$ coincide with the eigenvalues of the matrix
\begin{equation}\label{form:eigenmatrix}
\left( -\int_{\partial\Omega}  \left(\frac{d }{d\varepsilon}\phi_\varepsilon\Bigr|_{\varepsilon = 0} \mathbf{n}^t\right)  \frac{\partial v_i}{\partial \mathbf{n}}\frac{\partial v_j}{\partial \mathbf{n}}|\mathbf{n}_G|^2\,d\sigma \right)_{i,j= 1,\ldots, m},
\end{equation}
where $\{v_i\}_{j=1,\ldots,m}$ is an orthonormal basis in $L^2(\Omega)$ of the eigenspace corresponding to 
$\lambda$, $\mathbf{n}$ is the outer unit normal field to $\partial\Omega$, and 
$\mathbf{n}_G := (\mathbf{n}_x, |x|^s\mathbf{n}_y)$ (see formula \eqref{RN1B}). 
We note that formula \eqref{form:eigenmatrix} and the analogous formulas of the paper based on surface integrals are obtained by assuming that the eigenfunctions are of class $W_0^{1,2}(\Omega)\cap W^{2,2}(\Omega)$ or at least of class  $W_0^{1,2}(U)\cap W^{2,2}(U)$ for some neighborhood $U$ of the support of the perturbation, here $\frac{d }{d\varepsilon}\phi_\varepsilon\Bigr|_{\varepsilon = 0}$. This assumption is clearly automatically guaranteed by classical regularity theory when  $\overline{U}$ does not intersect the set $\{x=0\}$ (see also Remark 5.8). In any case, our formulas are also presented  in an alternative  form involving only volume integrals, in which case extra regularity assumptions are not required.     
Although we do not enter in regularity issues for the solutions of Grushin-type equations,
 we note that the $W_0^{1,2}(\Omega)\cap W^{2,2}(\Omega)$ regularity assumption is satisfied for suitable classes of domains (for instance, if the domain is smooth and 
 has no characteristic points). In this regard, we refer to Kohn and Nirenberg \cite{KoNi65} and Jerison \cite{Je81}. 

Finally, we show two consequences of our analysis. First, motivated by shape optimization problems, 
 we characterize the critical shapes under isovolumetric and isoperimetric perturbations.  In Theorem \ref{criticcond},
 we prove that if a domain $\Omega$ is a critical set  under the volume constraint $\mathrm{Vol}(\Omega) = \mathrm{const.}$ for the symmetric functions of the eigenvalues bifurcating from an eigenvalue $\lambda$ of multiplicity $m$, then 
 \[
 \sum_{l = 1}^m \left(\frac{\partial v_l}{\partial \mathbf{n}}\right)^2|\mathbf{n}_G|^2 = c \qquad \mbox{ on } 
 \partial \Omega \setminus \{x=0\},
 \]
 for some constant $c$. 
Next, we consider the same problem under the perimeter  constraint  $\mathrm{Per}(\Omega) = \mathrm{const.}$ and 
in Theorem \ref{criticcondp} we  obtain the  additional condition
 \[
 \sum_{l = 1}^m \left(\frac{\partial v_l}{\partial \mathbf{n}}\right)^2|\mathbf{n}_G|^2 = c 
 \mathcal{H} \qquad \mbox{ on }   \partial \Omega \setminus \{x=0\},
 \]
 where $\mathcal{H}$ is the mean curvature of $\partial\Omega$. 
 Under suitable regularity assumptions on the eigenfunctions, 
  the above extra conditions  are also sufficient for $\Omega$ to be critical.
 As a second consequence, we obtain a new simple proof of the Rellich-Pohozaev identity for the Grushin 
 eigenvalues, {\it i.e.}
 \[
\lambda = \frac{1}{2}\int_{\partial\Omega}
  \left(\frac{\partial v}{\partial \mathbf{n}}\right)^2|\mathbf{n}_G|^2((x,(1+s)y) \cdot \mathbf{n})\,d\sigma_z,
\]
where $v$ is an eigenfunction  normalized in $L^2(\Omega)$.

The paper is organized as follows. In Section \ref{sec:not} we introduce some notation and preliminaries. Section 
  \ref{sec:tep} is devoted to the eigenvalue problem for the Dirichlet Grushin Laplacian and to 
  some well-known basic results about it. In Section  \ref{sec:dom} we define the set of admissible domain perturbations
  $\phi$'s and we prove that the $\phi$-pullback is a linear homeomorphism. Section \ref{sec:analit} contains our 
  main results, namely, we show that the symmetric functions of the eigenvalues depend real analytically upon shape 
  perturbations and we prove the Hadamard formula and the
  Rellich-Nagy-type theorem.  In Section \ref{sec:critical} 
  we characterize the critical shapes under isovolumetric and isoperimetric perturbations and 
  we formulate the corresponding overdetermined problems. 
  Finally, in Section \ref{sec:rellich} we provide a new proof of the  Relllich-Pohozaev 
  formula for the Grushin eigenvalues.

\section{Notation and preliminaries}\label{sec:not}
In order to deal with the Grushin Laplacian  $\Delta_{G}$, we need to introduce a well-known class
of associated weighted Sobolev spaces. Let $U$ be a bounded open subset of $\mathbb{R}^N$. 
We denote by 
$W_G^{1,2}(U)$  the space of real-valued functions in $L^2(U)$ such that 
$\partial_{x_i}u\in L^2(U)$  for all $i \in \{1,\ldots,h\}$ and 
 $ |x|^s \partial_{y_j}u\in L^2(U)$ for all  $j \in \{1,\ldots,k\}$.
The space $W_G^{1,2}(U)$ can be endowed with the following scalar product:
\begin{align*}
\langle u,v\rangle_{W_G^{1,2}(U)} : = \langle u,v\rangle_{L^2(U)} 
+ \sum_{i=1}^h\langle \partial_{x_i}u,\partial_{x_i} v\rangle_{L^2(U)}
 +\sum_{j=1}^k\langle  |x|^{s}\partial_{y_j}u,  |x|^s\partial_{y_j} v\rangle_{L^2(U)},
\end{align*}
for all $u,v \in W_G^{1,2}(U)$. Here $ \langle \cdot,\cdot\rangle_{L^2(U)}$ denotes the 
standard scalar product in $L^2(U)$. It is well known that the space $W_G^{1,2}(U)$ endowed with 
the scalar product $\langle \cdot,\cdot\rangle_{W_G^{1,2}(U)}$ is a Hilbert space.
The norm induced by the scalar product $\langle \cdot,\cdot\rangle_{W_G^{1,2}(U)}$ is 
\[
\|u\|_{W_G^{1,2}(U)} := \left(\|u\|_{L^2(U)}^2 +  \sum_{i=1}^h\|\partial_{x_i}u\|_{L^2(U)}^2
+  \sum_{j=1}^k\||x|^{2s}\partial_{y_j}u\|_{L^2(U)}^2\right)^{1/2} 
\]
for all  $u \in W_G^{1,2}(U)$.
Throughout the paper we use the following notation:
\begin{align*}
I_G(z) := 
 \begin{pmatrix} 
 I_{h \times h} & \mathbf{0}_{h \times k} \\
  \mathbf{0}_{k \times h}  & |x|^{s} I_{k \times k}   \\
 \end{pmatrix} \qquad \forall z=(x,y) \in \mathbb{R}^N,
 \end{align*}
 where $ I_{h \times h}$ and $ I_{k \times k}$ denote the $h \times h$ and $k \times k$ identity matrices, respectively, 
 whereas  $ \mathbf{0}_{h \times k}$ and $ \mathbf{0}_{k \times h}$ denote the  $h \times k$ and  $k \times h$ null matrices, respectively. Moreover, if $u \in W_G^{1,2}(U)$ we set
\begin{equation}\label{defnablaG}
\nabla_G u : = (\partial_{x_1}u,\ldots, \partial_{x_h}u, |x|^s\partial_{y_1}u, \ldots,  |x|^s\partial_{y_m}u) =  \nabla u \, I_G,
\end{equation}
and we refer to $\nabla_G u$ as the Grushin gradient of $u$.  We note that if $u$ is in $W_G^{1,2}(U)$, in general its gradient $\nabla u$ is a distribution. However, if by $\nabla u$ we mean the function defined a.e. in 
$U$ as the distributional gradient of $u$ in $U \setminus \{x=0\}$,  then the last equality of \eqref{defnablaG} is not only formal but holds almost everywhere.  The norm 
$\|\cdot\|_{W_G^{1,2}(U)}$ is equivalent to the norm $\|\cdot\|_{W_G^{1,2}(U)}'$ defined by
\[
\|u\|_{W_G^{1,2}(U)}' \equiv \|u\|_{L^2(U)} + \||\nabla_Gu|\|_{L^2(U)} \qquad \forall u \in W_G^{1,2}(U).
\]
\begin{remark}\label{eqnorms}
If $\overline U \cap \{x=0\} = \varnothing$,  then the norm in $W^{1,2}_G(U)$ is equivalent to the standard Sobolev 
norm of $W^{1,2}(U)$. 
\end{remark}
We denote by $W_{G,0}^{1,2}(U)$ the closure of $C_c^\infty(U)$ in $W_G^{1,2}(U)$.
In Theorem \ref{RK} below, we present an analog of the Rellich-Kondrachov embedding theorem that holds for the Sobolev 
space $W_{G,0}^{1,2}(U)$. For a proof we refer to the works of Franchi and Serapioni \cite[Theorem 4.6]{FrSe87} 
and of Kogoj and Lanconelli \cite[Proposition 3.2]{KoLa12}, which consider a more general class of weighted Sobolev 
spaces.
\begin{theorem}[Rellich-Kondrachov]\label{RK}
Let $U$ be a bounded open subset of $\mathbb{R}^N$.  Then the space $W_{G,0}^{1,2}(U)$ 
is compactly embedded in $L^2(U)$.
\end{theorem}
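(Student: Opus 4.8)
The plan is to deduce the Grushin Rellich--Kondrachov theorem from the classical one by localizing away from the degeneracy set $\{x=0\}$ and separately controlling the contribution near it. Fix a bounded sequence $(u_n)_n$ in $W^{1,2}_{G,0}(U)$; we want a subsequence converging in $L^2(U)$. The key structural fact is that on the open set $U\setminus\{x=0\}$ the weight $|x|^{2s}$ is bounded below by a positive constant on any compact subset, so by Remark~\ref{eqnorms} the $W^{1,2}_G$-norm is locally equivalent to the ordinary $W^{1,2}$-norm there, and the classical Rellich--Kondrachov theorem applies on such compact pieces.

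Concretely, I would introduce for $\delta>0$ the open sets $U_\delta:=\{z=(x,y)\in U:\ |x|>\delta\}$ and the ``collar'' $V_\delta:=\{z\in U:\ |x|<2\delta\}$, with a smooth cutoff $\chi_\delta$ equal to $1$ on $\{|x|\ge 2\delta\}$ and $0$ on $\{|x|\le\delta\}$. The first step is to show that the mass of $u_n$ near the degeneracy set is uniformly small, uniformly in $n$: I expect an estimate of the form $\|u_n\|^2_{L^2(V_\delta)}\le C\,\delta^{?}\,\|u_n\|^2_{W^{1,2}_{G,0}(U)}$. This is the one genuinely non-classical ingredient. It should follow from a Hardy/Poincar\'e-type inequality in the $x$-variable: since $u_n\in W^{1,2}_{G,0}(U)$ is an $L^2$-limit of $C^\infty_c(U)$ functions and $|x|^{s}\partial_{y_j}u_n\in L^2$ while $\partial_{x_i}u_n\in L^2$ with \emph{no} weight, one can bound the $L^2$-norm over the thin slab $|x|<2\delta$ by writing $u_n(x,y)$ as an integral of $\partial_{x}u_n$ along segments reaching the boundary region and applying Cauchy--Schwarz; the slab has width $O(\delta)$ in the $x$-directions, which produces the small factor. (If one wishes to avoid even this, an alternative is to invoke the cited embedding results of Franchi--Serapioni \cite{FrSe87} and Kogoj--Lanconelli \cite{KoLa12} directly, as the excerpt already does — but the self-contained route is cleaner to sketch.)

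The second step is the diagonal argument. For each fixed $\delta$, the sequence $(\chi_\delta u_n)_n$ is bounded in $W^{1,2}(U_\delta)$ (using $|\nabla(\chi_\delta u_n)|\le |\nabla\chi_\delta|\,|u_n|+\chi_\delta|\nabla u_n|$ and that $|\nabla u_n|\le \delta^{-s}|\nabla_G u_n|$ on $U_\delta$), so by classical Rellich--Kondrachov it has a subsequence converging in $L^2(U_\delta)\subset L^2(U)$. Taking $\delta=1/j$, $j\in\mathbb{N}$, and extracting successively nested subsequences, the diagonal subsequence $(u_{n_p})_p$ has the property that $(\chi_{1/j}u_{n_p})_p$ converges in $L^2(U)$ for every $j$.

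Finally I combine the two steps to show $(u_{n_p})_p$ is Cauchy in $L^2(U)$. Given $\varepsilon>0$, choose $\delta=1/j$ so small that the collar estimate of Step~1 gives $\|u_{n_p}-\chi_\delta u_{n_p}\|_{L^2(U)}=\|(1-\chi_\delta)u_{n_p}\|_{L^2(U)}<\varepsilon$ for all $p$ (using the uniform bound on $\|u_n\|_{W^{1,2}_{G,0}(U)}$); then for this fixed $j$ use the convergence of $(\chi_\delta u_{n_p})_p$ to make $\|\chi_\delta u_{n_p}-\chi_\delta u_{n_q}\|_{L^2(U)}<\varepsilon$ for $p,q$ large; a triangle inequality yields $\|u_{n_p}-u_{n_q}\|_{L^2(U)}<3\varepsilon$. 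Hence $(u_{n_p})_p$ converges in $L^2(U)$, which is the desired compactness. The main obstacle is Step~1 — establishing the quantitative smallness of the $L^2$-mass in the collar around $\{x=0\}$ uniformly over the bounded sequence; everything else is a standard cutoff-plus-diagonalization packaging of the classical theorem.
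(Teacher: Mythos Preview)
The paper does not actually prove Theorem~\ref{RK}; it simply refers the reader to Franchi--Serapioni \cite[Theorem~4.6]{FrSe87} and Kogoj--Lanconelli \cite[Proposition~3.2]{KoLa12} for a proof in a more general weighted setting. Your proposal therefore goes beyond what the paper does by supplying a self-contained argument, and the argument you sketch is correct.

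In particular, your Step~1, which you rightly identify as the only non-classical ingredient, does go through: for $u\in C^\infty_c(U)$ extended by zero, writing $u(x_1,x',y)=-\int_{x_1}^{R}\partial_{x_1}u(t,x',y)\,dt$ with $R$ bounding the $x$-diameter of $U$, Cauchy--Schwarz gives $|u(x,y)|^2\le 2R\int_{-R}^{R}|\partial_{x_1}u(t,x',y)|^2\,dt$, and integrating over $\{|x|<2\delta\}\times\mathbb{R}^k$ (which is contained in $\{|x_1|<2\delta,\ |x'|<2\delta\}$) yields
\[
\|u\|_{L^2(V_\delta)}^2\le 8R\delta\,\|\partial_{x_1}u\|_{L^2(U)}^2\le C\delta\,\|u\|_{W^{1,2}_{G,0}(U)}^2,
\]
so the missing exponent in your $\delta^{?}$ is simply $1$. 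This uses only that the $x$-derivatives carry no weight. Your Step~2 is also fine once you observe that $\chi_\delta u_n$ actually lies in $W^{1,2}_0(U)$ (not merely $W^{1,2}(U_\delta)$), so the classical Rellich--Kondrachov theorem applies on the bounded open set $U$ without any boundary regularity hypothesis. The diagonal extraction and $3\varepsilon$-argument in Step~3 are standard.

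In short, the paper outsources the proof to the literature, while you give an elementary localize-and-diagonalize argument tailored to the specific structure of $\Delta_G$ (unweighted $x$-derivatives); your approach is less general than the cited references but entirely adequate here and arguably more transparent for this particular operator.
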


It is also known that an analog of the Poincar\'e inequality holds in the space  $W_{G,0}^{1,2}(U)$. Namely, the following theorem holds (for a proof, see, {\it e.g.}, D'Ambrosio \cite[Theorem 3.7]{Da04}, Monticelli and Payne \cite[Theorem 2.1]{MoPa09} and Monticelli, Payne and Punzo \cite{MoPaPu19}).  
\begin{theorem}[Poincar\'e inequality]\label{PI}
Let $U$ be a bounded open subset of $\mathbb{R}^N$.  Then there exists $C>0$ such that
\begin{equation*}
\|u\|_{L^2(U)} \leq C \||\nabla_G u|\|_{L^2(U)} \qquad \forall u \in W_{G,0}^{1,2}(U).
\end{equation*}
\end{theorem}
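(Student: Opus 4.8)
The plan is to reduce by density to smooth compactly supported functions and then to observe that the degeneracy of the weight $|x|^{s}$ plays no role here: in $\nabla_G u$ the $x$-derivatives of $u$ appear \emph{without} weight, and since $U$ is bounded this alone suffices to run the classical one-dimensional Poincar\'e argument in one of the $x$-variables. Note that $h\geq 1$, since $N\geq 2$ and $h,k$ are positive integers with $h+k=N$, so such a variable is always available.

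First I would take $u\in C_c^\infty(U)$ and extend it by $0$ to the whole of $\mathbb{R}^N$, obtaining a function in $C_c^\infty(\mathbb{R}^N)$. Since $U$ is bounded, there is $R>0$ such that $U\subseteq\{z=(x,y)\in\mathbb{R}^h\times\mathbb{R}^k : |x_1|<R\}$. As $u$ vanishes for $x_1\leq -R$, the fundamental theorem of calculus gives
\[
u(x_1,x_2,\dots,x_h,y)=\int_{-R}^{x_1}\partial_{x_1}u(t,x_2,\dots,x_h,y)\,dt ,
\]
whence, by the Cauchy--Schwarz inequality,
\[
|u(z)|^{2}\leq 2R\int_{-R}^{R}|\partial_{x_1}u(t,x_2,\dots,x_h,y)|^{2}\,dt \qquad \forall z=(x,y)\in\mathbb{R}^N .
\]
Integrating this inequality over $z\in U$, restricting the $x_1$-integration to $(-R,R)$ (which is legitimate since $u=0$ outside $U$) and using Fubini's theorem, one obtains $\|u\|_{L^2(U)}^{2}\leq 4R^{2}\,\|\partial_{x_1}u\|_{L^2(U)}^{2}$. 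Since $\partial_{x_1}u$ is one of the components of $\nabla_G u$, we have $|\partial_{x_1}u|^2\leq|\nabla_G u|^2$ pointwise, and therefore
\[
\|u\|_{L^2(U)}\leq 2R\,\||\nabla_G u|\|_{L^2(U)} \qquad \forall u\in C_c^\infty(U).
\]

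Finally I would remove the smoothness assumption. By definition $W_{G,0}^{1,2}(U)$ is the closure of $C_c^\infty(U)$ in $W_G^{1,2}(U)$, so given $u\in W_{G,0}^{1,2}(U)$ there exist $u_n\in C_c^\infty(U)$ with $u_n\to u$ in $W_G^{1,2}(U)$; in particular $\|u_n\|_{L^2(U)}\to\|u\|_{L^2(U)}$ and $\||\nabla_G u_n|\|_{L^2(U)}\to\||\nabla_G u|\|_{L^2(U)}$, and passing to the limit in the inequality proved for the $u_n$ yields the claim with $C=2R$. There is no genuine obstacle in this argument; the only point worth emphasizing is that one must \emph{not} attempt to use the degenerate $y$-derivatives, and that the resulting constant depends only on the width of $U$ in an $x$-direction (in particular, it is independent of $s$).
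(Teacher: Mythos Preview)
Your argument is correct. The key observation---that $\partial_{x_1}u$ appears unweighted in $\nabla_G u$, so the standard one-dimensional Poincar\'e estimate in the $x_1$-direction already gives the result---is sound, and the density step is routine since $W_{G,0}^{1,2}(U)$ is by definition the closure of $C_c^\infty(U)$.

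As for comparison with the paper: the paper does not actually prove this theorem. It states the result and refers to D'Ambrosio \cite{Da04}, Monticelli and Payne \cite{MoPa09}, and Monticelli, Payne and Punzo \cite{MoPaPu19}, which treat more general weighted settings. Your approach is therefore not merely different but strictly more self-contained; it exploits the specific structure of the Grushin gradient (namely the presence of at least one unweighted derivative, guaranteed by $h\geq 1$) to bypass the general machinery those references develop. The trade-off is that your constant $2R$ depends only on the width of $U$ in an $x$-direction and is independent of $s$, which is a pleasant bonus, whereas the cited references yield the inequality as a special case of results that remain valid even when every direction carries a degenerate weight.
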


\section{The eigenvalue problem}\label{sec:tep}
Here we introduce the precise formulation of the eigenvalue problem. We fix  $U$ to be a bounded open subset of $\mathbb{R}^N$.  The classical spectral problem is  
\begin{equation}\label{ssp}
\begin{cases}
-\Delta_{G} u = \lambda  u \qquad &\mbox{ in } U,\\
u=0 \qquad &\mbox{ on } \partial U,
\end{cases}
\end{equation}
in the unknowns $\lambda$ (the eigenvalues) and $u$ (the eigenfunctions). Actually, in order to reduce 
the regularity assumptions, we consider the weak formulation of  problem \eqref{ssp}. Namely, 
\begin{equation}\label{wsp}
\int_{U}\nabla_G u \cdot\nabla_G\varphi\,dz = \lambda \int_{U} u\varphi\,dz \qquad \forall 
\varphi \in W_{G,0}^{1,2}(U),
\end{equation}
in the unknowns $\lambda \in \mathbb{R}$ and $u \in W_{G,0}^{1,2}(U)$. We use a standard procedure which enables us to reduce the study of the
eigenvalues of \eqref{wsp} to an eigenvalue problem for a compact self-adjoint operator in a Hilbert space.
With a slight abuse of notation, we consider the Grushin Laplacian $\Delta_{G}$ as the operator from $W_{G,0}^{1,2}(U)$ to 
its dual $(W_{G,0}^{1,2}(U))'$ defined by  
\begin{equation}\label{DGdef}
\Delta_{G}[u][\varphi] := - \int_{U}\nabla_G u \cdot\nabla_G\varphi\,dz \qquad \forall u,\varphi \in W_{G,0}^{1,2}(U).
\end{equation}
%For the sake of completeness we verify that $\Delta_G[u] \in  (W_{G,0}^{1,2}(U))'$ for all 
%$u \in W_{G,0}^{1,2}(U)$. We have that 
%\begin{align}\label{contgrl}
%|\Delta_G[u][\varphi]| = \left|\int_{U}\nabla_G u \cdot\nabla_G\varphi\,dz\right| &\leq \|\nabla_Gu\|_{L^2(U)}
%\|\nabla_G\varphi\|_{L^2(U)}\\ \nonumber
%&\leq c_1\|u\|_{W^{1,2}_G(U)}
%\|\varphi\|_{W^{1,2}_G(U)} \qquad  \forall u,\varphi \in W_{G,0}^{1,2}(U),
%\end{align}
%for some $c_1>0$ and thus $\Delta_G[u] \in  (W_{G,0}^{1,2}(U))'$ for all $u \in W_{G,0}^{1,2}(U)$. 
Next, we define 
the following bilinear form 
\[
Q_G[u,v] := - \Delta_{G}[u][v]  \qquad  \forall u,v \in W_{G,0}^{1,2}(U).
\]
It is easily seen that the bilinear form $Q_G$ is continuous. Moreover, by the Poincar\'e inequality  of 
Theorem \ref{PI}, we have that
\[
Q_G[u,u] = \int_{U}|\nabla_G u|^2\,dz \geq c_2 \|u\|_{W^{1,2}_G(U)}^2
 \qquad  \forall u \in W_{G,0}^{1,2}(U),
\]
for some $c_2>0$ and thus the bilinear form $Q_G$ is coercive. In other words, $Q_G$ 
is a scalar product on $W_{G,0}^{1,2}(U)$ which induces a norm equivalent to the standard one. 
Thus, we can apply the Riesz representation theorem 
to deduce that  $\Delta_{G}$ is a linear  homeomorphism from $W_{G,0}^{1,2}(U)$ onto 
$(W_{G,0}^{1,2}(U))'$. 
We denote by $J$ the map from $L^2(U)$ to $(W_{G,0}^{1,2}(U))'$ defined by
\begin{equation}\label{jdef}
J[u][\psi] := \langle u, \psi \rangle_{L^2(U)}= \int_{U} u\psi\,dz \qquad \forall u \in L^2(U), \, 
\forall\psi \in W_{G,0}^{1,2}(U).
\end{equation}
Clearly $J$ is continuous and injective. Equation \eqref{wsp} can be rewritten as 
\begin{equation}\label{wspr}
- \Delta_{G}^{(-1)} \circ J \circ i[u] = \mu u,
\end{equation}
where $\mu = \lambda^{-1}$ and $i$ is the embedding of $ W_{G,0}^{1,2}(U)$ in $L^2(U)$.
Accordingly, it is natural to consider the operator $T_G$ from  $W_{G,0}^{1,2}(U)$ to itself 
defined by 
\begin{equation*}
T_G[u] := - \Delta_{G}^{(-1)} \circ J \circ i[u] \qquad \forall u \in W_{G,0}^{1,2}(U).
\end{equation*}
 Since the embedding $i$ is 
compact by  Theorem \ref{RK}, $T_G$ is compact in  $W_{G,0}^{1,2}(U)$. 
Moreover, $T_G$ is self-adjoint in $W_{G,0}^{1,2}(U)$ endowed with the scalar product $Q_G$. Indeed,
\begin{align*}
Q_G[T_Gu,v] =-\Delta_{G}[T_Gu][v]=\Delta_{G}\left[\Delta_{G}^{(-1)}\circ J\circ i[u]\right][v]= \int_{U}uv\,dz\,
\qquad \forall u,v \in W_{G,0}^{1,2}(U).
\end{align*}
Since $Q_G$ is symmetric, we have that $Q_G[T_Gu,v]= Q_G[u,T_Gv]$. 
In addition,  $T_G$ is injective because it is the composition of injective maps. 
It follows that the spectrum of   $T_G$ is discrete and consists of a sequence of positive eigenvalues
$\mu_j[U]$ of finite multiplicity converging to zero. More precisely, by classical spectral theory, by the min-max principle (see, {\it e.g.}, Davies \cite[\S 4.5]{Da95}), and by the equivalence of the formulations \eqref{wsp} and \eqref{wspr} 
 we have the following.

\begin{theorem}
The eigenvalues of equation \eqref{wsp} have finite multiplicity and can be represented by means of a divergent sequence 
\[ 
0 < \lambda_1[U] \leq \lambda_2[U] \leq \ldots \leq\lambda_j[U]\leq \ldots \to +\infty.
\]
Moreover, they coincide with the inverse of the eigenvalues $\mu_j[U]$ of $T_G$, and
\begin{equation*}\label{min-max}
\lambda_j[U] = \min_{\substack{E \subseteq W_{G,0}^{1,2}(U)\\ \mathrm{dim} E = j}} 
\max_{\substack{u \in E \\ u\neq 0}} \frac{\int_{U}|\nabla_G u|^2\,dz}{\int_{U} u^2 \, dz} 
\qquad \forall j \in \mathbb{N}.
\end{equation*}
\end{theorem}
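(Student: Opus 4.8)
The statement is a direct application of the spectral theory of compact self-adjoint operators to $T_G$, combined with the algebraic identity $Q_G[T_Gu,v]=\int_U uv\,dz$ already established above. First I would record that $W_{G,0}^{1,2}(U)$, endowed with the scalar product $Q_G$ (which induces a norm equivalent to the standard one by coercivity of $Q_G$), is an infinite-dimensional Hilbert space, and that on it $T_G$ is compact (Theorem~\ref{RK}), self-adjoint, injective, and positive, the last property following from $Q_G[T_Gu,u]=\int_U u^2\,dz>0$ for every $u\neq 0$ since the embedding $i$ is injective. By the Hilbert--Schmidt spectral theorem, $T_G$ then admits an orthonormal basis of eigenvectors, and its spectrum apart from $0$ consists of a sequence of positive eigenvalues of finite multiplicity accumulating only at $0$; injectivity of $T_G$ excludes $0$ as an eigenvalue, while infinite-dimensionality of the space forces this eigenvalue sequence to be infinite. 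Writing it, with multiplicities, as $\mu_1[U]\geq\mu_2[U]\geq\cdots>0$, $\mu_j[U]\to 0$, gives the $\mu_j[U]$ in the statement.

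Next I would invoke the equivalence of the formulations \eqref{wsp} and \eqref{wspr}: unwinding the definitions of $J$, $i$, and $\Delta_G^{(-1)}$, a pair $(\lambda,u)$ with $u\neq 0$ solves \eqref{wsp} if and only if $(\mu,u)$ with $\mu=\lambda^{-1}$ solves $T_G u=\mu u$, with the \emph{same} eigenfunction. Hence the eigenvalues of \eqref{wsp} are exactly the numbers $\lambda_j[U]=\mu_j[U]^{-1}$, they inherit the finite multiplicities of the $\mu_j[U]$, and, since $\mu_j[U]>0$ and $\mu_j[U]\to 0$, they form a divergent sequence $0<\lambda_1[U]\leq\lambda_2[U]\leq\cdots\to+\infty$. (Strict positivity of $\lambda_1[U]$ also follows at once from the Poincar\'e inequality of Theorem~\ref{PI}.)

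Finally, for the min--max characterization I would apply the Courant--Fischer principle for compact self-adjoint operators (see, {\it e.g.}, Davies \cite[\S 4.5]{Da95}) to $T_G$ on $(W_{G,0}^{1,2}(U),Q_G)$, obtaining
\begin{align*}
\mu_j[U] &= \max_{\substack{E\subseteq W_{G,0}^{1,2}(U)\\ \dim E = j}}\ \min_{\substack{u\in E\\ u\neq 0}} \frac{Q_G[T_Gu,u]}{Q_G[u,u]} \\
&= \max_{\substack{E\subseteq W_{G,0}^{1,2}(U)\\ \dim E = j}}\ \min_{\substack{u\in E\\ u\neq 0}} \frac{\int_U u^2\,dz}{\int_U |\nabla_G u|^2\,dz}\,,
\end{align*}
where I used $Q_G[T_Gu,u]=\int_U u^2\,dz$ and $Q_G[u,u]=\int_U |\nabla_G u|^2\,dz$. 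Taking reciprocals turns the inner minimum into a maximum and the outer maximum over $j$-dimensional subspaces into a minimum, which yields the asserted formula for $\lambda_j[U]=\mu_j[U]^{-1}$.

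I do not expect a genuine obstacle here: all functional-analytic ingredients (compactness, self-adjointness, coercivity, injectivity) have already been assembled. The only points needing a little care are checking that $W_{G,0}^{1,2}(U)$ is truly infinite-dimensional, so that the eigenvalue sequence does not terminate, and the bookkeeping involved in passing from the min--max for $T_G$ to the reciprocal min--max for $\lambda_j[U]$; both are routine.
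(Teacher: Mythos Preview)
Your proposal is correct and follows exactly the route indicated by the paper, which does not give a detailed proof but simply appeals to classical spectral theory for compact self-adjoint operators, the min--max principle (Davies \cite[\S 4.5]{Da95}), and the equivalence of formulations \eqref{wsp} and \eqref{wspr}. You have merely unpacked these references in full, including the positivity $Q_G[T_Gu,u]=\int_U u^2\,dz$ and the reciprocal passage from the Courant--Fischer formula for $\mu_j[U]$ to that for $\lambda_j[U]$.
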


\section{Admissible domain perturbations}\label{sec:dom}
Since we plan to consider the eigenvalue problem \eqref{wsp} on a variable domain, the first step is to define 
what we mean by variable domain. Our point of view is to consider a fixed domain and  a family of open sets parametrized by a suitable homeomorphism defined on the fixed domain.  Accordingly, we fix 
\begin{equation}\label{Omega_def}
\begin{split}
&\text{a bounded open subset $\Omega$ of $\mathbb{R}^N$ and a bounded open subset  $O$ of $\mathbb{R}^N$}
\\
&\text{such that $\varnothing\neq  \overline \Omega \cap \{x=0\} \subseteq O$.}
\end{split}
\end{equation}
From now on if $\phi$ is a map with values in $\mathbb{R}^N$, we denote by $\phi_x = (\phi_{x_1}, \ldots, \phi_{x_h})$ and by $\phi_y= (\phi_{y_1}, \ldots, \phi_{y_k})$ the 
 first $h$ and the last $k$ components of $\phi$, respectively. Moreover, we denote by $\pi_x$ and by $\pi_y$  the projections of $\mathbb{R}^N$ to $\mathbb{R}^h$ and  $\mathbb{R}^k$ which  take 
$z=(x,y)$ to $x$ and $y$, respectively.  We set 
\begin{align*}
 L_{\Omega,O} := \Big\{ \phi   \in   \mathrm{Lip}(\Omega)^N:\,\, 
 &\exists\, \tilde \phi_x \in\mathrm{Lip}(\pi_x(\Omega \cap O))^h ,\,
 \tilde \phi_y \in\mathrm{Lip}(\pi_y(\Omega \cap O))^k \\
 &\mbox{ s.t. } \phi =( \tilde \phi_x \circ \pi_x ,  \tilde \phi_y \circ \pi_y) \, \mbox{ in } \Omega \cap O, \,\,\tilde \phi_x(0) =0 \Big\}.
 \end{align*}
It is 
 easily seen that ${L}_{\Omega, O}$ is a closed linear subspace of the Banach space $\mathrm{Lip}(\Omega)^N$,  where  $\mathrm{Lip}(\Omega)$ denotes the Banach space of Lipschitz functions in $\Omega$ endowed with the norm $\sup_\Omega|f| +\sup_{\substack{z_1,z_2 \in \Omega \\ z_1 \neq z_2}} \frac{|f(z_1)-f(z_2)|}{|z_1-z_2|}$.  Therefore, ${L}_{\Omega, O}$ is a Banach space itself. 
 We define the space of admissible shape perturbations as 
\begin{align*}
\mathcal{A}_{\Omega, O} := \bigg\{ \phi    \in    L_{\Omega,O}: &
 \inf_{\substack{ z_1,z_2 \in  \Omega\\ z_1 \neq z_2} }
\frac{|\phi(z_1)-\phi(z_2)|}{|z_1-z_2|}>0\,\bigg\}.
\end{align*}
 By Lamberti and Lanza de Cristoforis \cite[Lemma 3.11]{LaLa04}, if $\phi \in \mathcal{A}_{\Omega,O}$ then $\phi$ is injective  and 
 $\inf_{\Omega}|\det D\phi| >0$. Moreover, $\phi(\Omega)$ is a bounded open set and the inverse map $\phi^{(-1)}$ belongs to $\mathcal{A}_{\phi(\Omega), O'}$ for some open set $O'$ containing 
 $\overline{\phi(\Omega)} \cap \{x=0\}$.
 \begin{remark}
 If  $\phi \in \mathcal{A}_{\Omega,O}$, then $\phi$ is a bi-Lipschitz homeomorphism from 
 $\Omega$ into his image that near the degenerate set, {\it i.e.} inside $\Omega \cap O$,  deforms separately the $x$-direction  and the $y$-direction. Moreover, if a point belongs to the degenerate set $\{x=0\}$, then its image through $\phi$ has to remain on the degenerate set. Since $\phi$ is bi-Lipschitz, it is easily seen that there exists $C>0$ such that $\frac{1}{C}|x|\leq \phi_x(z) \leq C|x|$ for all $z \in \Omega$. 
 Finally, it is worth noting that our setting includes both the case in which the degenerate set 
 $\{x=0\}$  intersects $\Omega$ and the case in which part of the boundary of $\Omega$ lies on the degenerate set.
 \end{remark}
 \begin{figure}[htb]
 \captionsetup{justification=centering}

\centering
\includegraphics[width=6in]{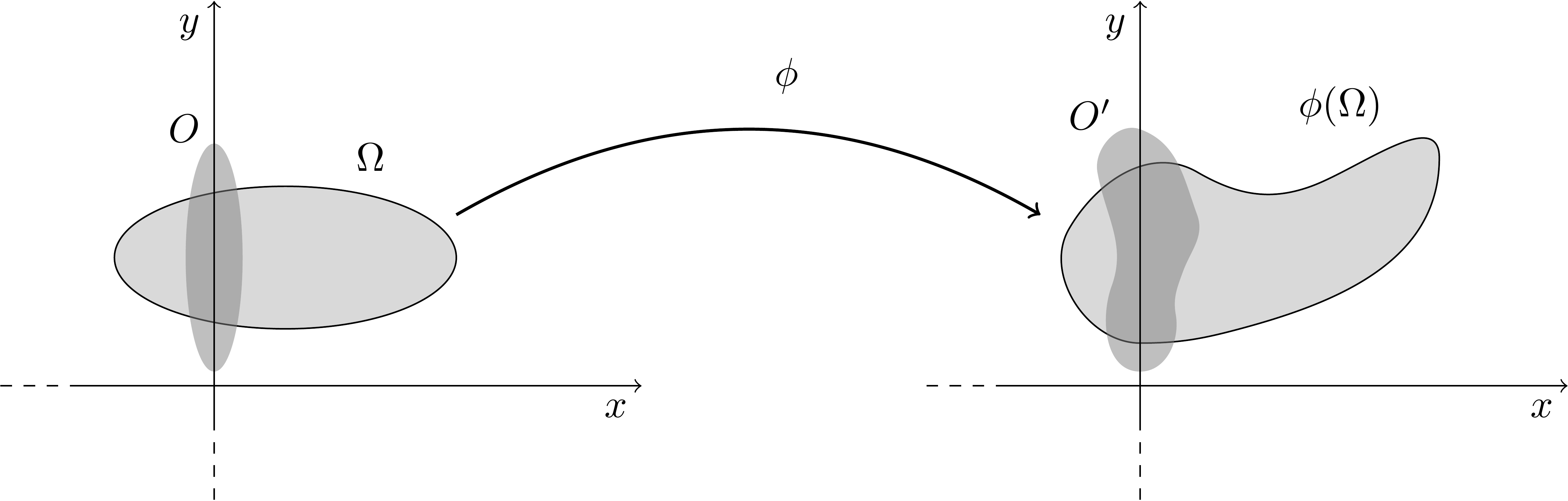}
\begin{center}
\caption{{\it An example, in dimension $N=2$, of a homeomorphism $\phi \in \mathcal{A}_{\Omega,O}$ 
 for a  possible choice of fixed $\Omega$ and $O$.  In the intersection between $\Omega$ and $O$, the 
 homeomorphism $\phi$  deforms separately the $x$ and $y$ directions.}}\label{fig:DiffeoOm}
\end{center}
\end{figure} 
 For a  transformation $\phi \in \mathcal{A}_{\Omega,O}$, we are able to prove that the $\phi$-pushforward (or equivalently the $\phi$-pullback), that we will use to transplant the problem to the fixed domain $\Omega$, is a linear homeomorphism.
 \begin{lemma}\label{comphomeo}
 Let $\Omega$ and $O$ be as in \eqref{Omega_def}. Let  $\phi \in \mathcal{A}_{\Omega,O}$. 
 Then the operator $C_{\phi^{(-1)}}$ defined by 
 \[
 C_{\phi^{(-1)}}[u] := u \circ \phi^{(-1)} \qquad \forall u \in L^2(\Omega),
 \]
 is a linear homeomorphism  from  $L^2(\Omega)$ to $L^2(\phi(\Omega))$ which restricts  a linear homeomorphism from $W^{1,2}_{G}(\Omega)$ onto $W^{1,2}_{G}(\phi(\Omega))$ and from $W^{1,2}_{G,0}(\Omega)$ onto $W^{1,2}_{G,0}(\phi(\Omega))$. Moreover 
 $C_{\phi^{(-1)}}^{(-1)} = C_{\phi}$. 
 \end{lemma}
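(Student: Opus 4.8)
The plan is to prove the three assertions in turn, deducing each from the change–of–variables (area) formula for bi-Lipschitz maps together with the structural features of $\mathcal{A}_{\Omega,O}$: the splitting of the $x$- and $y$-variables near $\{x=0\}$ and the comparability $\tfrac1C|x|\le|\phi_x(z)|\le C|x|$ recorded above. For the $L^2$ statement: by \cite[Lemma 3.11]{LaLa04} and the Lipschitz continuity of $\phi$, the Jacobian $|\det D\phi|$ is bounded above and below by positive constants on $\Omega$, so the area formula gives $\int_{\phi(\Omega)}|u\circ\phi^{(-1)}|^2\,dw=\int_{\Omega}|u|^2|\det D\phi|\,dz$ for every measurable $u$; hence $C_{\phi^{(-1)}}$ is a linear homeomorphism from $L^2(\Omega)$ onto $L^2(\phi(\Omega))$, and since $(u\circ\phi^{(-1)})\circ\phi=u$ while, by \cite[Lemma 3.11]{LaLa04}, $\phi^{(-1)}\in\mathcal{A}_{\phi(\Omega),O'}$, the same reasoning applied to $\phi^{(-1)}$ shows that $C_\phi$ is bounded and that $C_{\phi^{(-1)}}^{(-1)}=C_\phi$.

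For the $W_G^{1,2}$ statement, which is the heart of the matter, set $\psi:=\phi^{(-1)}$. Using $\tilde\phi_x(0)=0$, the analogous property of $\psi$, the injectivity of $\phi$, and $\overline\Omega\cap\{x=0\}\subseteq O$, one first checks that $\phi(\Omega\cap\{x=0\})=\phi(\Omega)\cap\{x=0\}$ and that $\phi$ restricts to a bi-Lipschitz homeomorphism of $\Omega\setminus\{x=0\}$ onto $\phi(\Omega)\setminus\{x=0\}$. On $\Omega\setminus\{x=0\}$ the function $u$ lies in $W^{1,2}_{\mathrm{loc}}$ by Remark \ref{eqnorms}, so the classical chain rule for Lipschitz changes of variables yields, a.e. on $\phi(\Omega)\setminus\{x=0\}$,
\[
\nabla(u\circ\psi)=(\nabla u\circ\psi)\,D\psi,\qquad\text{hence}\qquad \nabla_G(u\circ\psi)=(\nabla_G u\circ\psi)\,M,\qquad M:=I_G(\psi)^{-1}\,D\psi\,I_G,
\]
where we used \eqref{defnablaG} and the a.e. invertibility of $I_G$ off $\{x=0\}$.

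The decisive point is that $\|M\|\in L^\infty(\phi(\Omega))$. On $\phi(\Omega)\setminus O'$, where $|x|$ stays bounded away from $0$ (recall $\overline{\phi(\Omega)}\cap\{x=0\}\subseteq O'$) and so $I_G$ and $I_G\circ\psi$ are uniformly invertible while $D\psi$ is bounded, this is immediate. On $\phi(\Omega)\cap O'$, where $\psi=(\tilde\psi_x\circ\pi_x,\tilde\psi_y\circ\pi_y)$ has block–diagonal differential, one computes $M=\mathrm{diag}\bigl(D\tilde\psi_x,\,(|x|/|\psi_x|)^{s}D\tilde\psi_y\bigr)$, and $(|x|/|\psi_x|)^{s}$ is bounded by the comparability $\tfrac1C|x|\le|\psi_x|\le C|x|$, while $D\tilde\psi_x,D\tilde\psi_y$ are bounded because $\tilde\psi_x,\tilde\psi_y$ are Lipschitz. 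Granting $\|M\|\in L^\infty$, the pointwise bound $|\nabla_G(u\circ\psi)|\le\|M\|_{L^\infty}|\nabla_G u\circ\psi|$, the area formula of the first step, and the equivalence of $\|\cdot\|_{W_G^{1,2}}$ with $\|\cdot\|_{L^2}+\||\nabla_G\cdot|\|_{L^2}$ give $\|u\circ\psi\|_{W_G^{1,2}(\phi(\Omega))}\le C\|u\|_{W_G^{1,2}(\Omega)}$; the same argument applied to $\psi\in\mathcal{A}_{\phi(\Omega),O'}$ shows $C_\phi$ maps $W_G^{1,2}(\phi(\Omega))$ boundedly into $W_G^{1,2}(\Omega)$, and since it is already the inverse of $C_{\phi^{(-1)}}$ on $L^2$ we conclude that $C_{\phi^{(-1)}}$ is a homeomorphism of the $W_G^{1,2}$-spaces.

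For the $W_{G,0}^{1,2}$ statement: if $u\in C_c^\infty(\Omega)$ then $u\circ\phi^{(-1)}$ has compact support in $\phi(\Omega)$ and bounded gradient a.e. (the only possible difficulty, near $\{x=0\}$, being ruled out by the block structure of $\phi^{(-1)}$ and the smoothness of $u$), so a standard mollification — using that on a bounded set $\|\cdot\|_{W^{1,2}}$ dominates $\|\cdot\|_{W_G^{1,2}}$ — puts $u\circ\phi^{(-1)}$ in $W_{G,0}^{1,2}(\phi(\Omega))$; by boundedness of $C_{\phi^{(-1)}}$ on $W_G^{1,2}$ and density of $C_c^\infty(\Omega)$ in $W_{G,0}^{1,2}(\Omega)$, together with the symmetric statement for $C_\phi$, we obtain that $C_{\phi^{(-1)}}$ restricts to a homeomorphism from $W_{G,0}^{1,2}(\Omega)$ onto $W_{G,0}^{1,2}(\phi(\Omega))$. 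The main obstacle throughout is the $L^\infty$-bound on $M$ across $\{x=0\}$: this is precisely where the defining properties of $\mathcal{A}_{\Omega,O}$ (decoupling of $x$ and $y$ near $\{x=0\}$, and $|\phi_x(z)|\asymp|x|$) are indispensable, since for a general bi-Lipschitz map the Grushin weight $|x|^s$ would not transform in a controlled way. A minor secondary point — immaterial under the a.e.-gradient reading of the definition of $W_G^{1,2}$ — is to verify that the a.e. chain-rule identity is the correct distributional statement on all of $\phi(\Omega)$, which follows from a short cut-off argument using $|\{|x|<\varepsilon\}\cap K|=O(\varepsilon^h)$ for compact $K$.
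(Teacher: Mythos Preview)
Your proof is correct and follows essentially the same approach as the paper: both arguments hinge on splitting the domain into the region near $\{x=0\}$ (where the block-diagonal structure of $D\phi$ and the comparability $|\phi_x|\asymp|x|$ control the Grushin weight) and the region away from $\{x=0\}$ (where the $W^{1,2}_G$- and $W^{1,2}$-norms are equivalent). The only cosmetic differences are that you package the key estimate via the transition matrix $M=I_G(\psi)^{-1}D\psi\,I_G$ and work directly with $u\in W^{1,2}_G$ using the a.e.-gradient interpretation, whereas the paper first passes through $u\in C^1\cap W^{1,2}_G$ and invokes the Meyers--Serrin-type density result of Franchi, Serapioni and Serra Cassano~\cite{FrSeSe96}.
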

 \begin{proof}
 Let $u \in  L^2(\Omega)$. There exists  $c_1>0$ such that 
 \begin{align*}
 \| C_{\phi^{(-1)}}[u]\|_{L^2(\phi(\Omega))}^2 &=  \int_{\phi(\Omega)}(u \circ \phi^{(-1)}(z))^2 \, dz  
 = \int_{\Omega}(u(z))^2 |\det D\phi|\, dz \\
 & \leq 
  c_1  \int_{\Omega}(u(z))^2\, dz= c_1\|u\|_{L^2(\Omega)}^2 .
 \end{align*}
 Thus, $C_{\phi^{(-1)}}$ is continuous from $L^2(\Omega)$ to 
$L^2(\phi(\Omega))$. Since  $C_{\phi^{(-1)}}$ is clearly surjective, the Open Mapping Theorem implies that it  is a linear homeomorphism from $L^2(\Omega)$ to 
$L^2(\phi(\Omega))$. 

Next we fix $u \in C^1(\Omega) \cap W^{1,2}_{G}(\Omega)$.
Since $\phi$ is invertible,  we have that $\phi(\Omega \cap O) \cap \phi(\Omega \setminus O)= \varnothing$ and  that $\phi(\Omega \cap O) \cup \phi(\Omega \setminus O)= \phi(\Omega)$ and thus
 \begin{equation}\label{eq:comphomeo:1}
 \begin{split}
 \| |\nabla_GC_{\phi^{(-1)}}[u]|\|_{L^2(\phi(\Omega))}^2&= \int_{\phi(\Omega)}|\nabla_G(u \circ \phi^{(-1)})(z)|^2 \, dz \\
 =&  \int_{\phi(\Omega\cap O)}|\nabla_G(u \circ \phi^{(-1)})(z)|^2 \, dz \\
 &+ \int_{\phi(\Omega \setminus O)}|\nabla_G(u \circ \phi^{(-1)})(z)|^2 \, dz.
   \end{split}
   \end{equation}
We now consider the first summand $\int_{\phi(\Omega\cap O)}|\nabla_G(u \circ \phi^{(-1)})(z)|^2 \, dz$ in the right hand side of \eqref{eq:comphomeo:1}. We have
  \begin{align*}
    \int_{\phi(\Omega\cap O)}|\nabla_G(u \circ \phi^{(-1)})(z)|^2 \, dz 
    =& \int_{\phi(\Omega\cap O)}|\nabla u(\phi^{(-1)}(z))(D\phi^{(-1)}(z))I_G(z)|^2 \, dz \\
    =&\int_{\Omega \cap O}|\nabla u(z)(D\phi(z))^{-1}I_G(\phi(z))|^2 |\det D\phi(z)|\, dz .
  \end{align*}
  We now observe that for a.a. $z \in \Omega \cap O$ we have 
    \begin{align*}
  (D\phi(z))^{-1}I_G(\phi(z)) =  &\begin{pmatrix} 
D_x\phi_x(z) & \mathbf{0}_{h \times k} \\
  \mathbf{0}_{k \times h}  & D_y\phi_y(z)  \\
 \end{pmatrix}^{-1}
  \begin{pmatrix} 
 I_{h \times h} & \mathbf{0}_{h \times k} \\
  \mathbf{0}_{k \times h}  & |\phi_x(z)|^{s} I_{k \times k}   \\
 \end{pmatrix}\\
 &=\begin{pmatrix} 
 (D_x\phi_x(z))^{-1} & \mathbf{0}_{h \times k} \\
  \mathbf{0}_{k \times h}  & (D_y\phi_y(z))^{-1}  \\
 \end{pmatrix}
 \begin{pmatrix} 
 I_{h \times h} & \mathbf{0}_{h \times k} \\
  \mathbf{0}_{k \times h}  & |\phi_x(z)|^{s} I_{k \times k}   \\
 \end{pmatrix}\\
 &= \begin{pmatrix} 
 I_{h \times h} & \mathbf{0}_{h \times k} \\
  \mathbf{0}_{k \times h}  & |\phi_x(z)|^{s} I_{k \times k}   \\
 \end{pmatrix}
 \begin{pmatrix} 
 (D_x\phi_x(z))^{-1} & \mathbf{0}_{h \times k} \\
  \mathbf{0}_{k \times h}  & (D_y\phi_y(z))^{-1} \\
 \end{pmatrix}\\
 &= I_G(\phi(z)) (D\phi(z))^{-1} \, .
  \end{align*}
Thus,
\[
\begin{split}
\int_{\Omega \cap O}|\nabla u(z)(D\phi(z))^{-1}I_G(\phi(z))|^2& |\det D\phi(z)|\, dz\\&=  \int_{\Omega \cap O}|\nabla u(z)I_G(\phi(z))(D\phi(z))^{-1}|^2|\det D\phi(z)| \, dz\, .
\end{split}
\]
Then we note that
\[
\nabla u(z)I_G(\phi(z))=(\nabla_xu(z),|\phi_x(z)|^s\nabla_yu(z)) \qquad \mbox{ for a.a. } z \in \Omega, 
\] 
 and  that there exists a constant $C>0$ such that $\frac{1}{C}|x|\leq|\phi_x(z)| \leq C |x|$ for all $z \in \Omega$. As a consequence, since $\phi \in \mathrm{Lip}(\Omega)^N$, we deduce the existence of $c_2>0$ such that 
\[
 \int_{\Omega \cap O}|\nabla u(z)I_G(\phi(z))(D\phi(z))^{-1}|^2|\det D\phi(z)| \, dz\leq c_2 \int_{\Omega \cap O}|\nabla_G u(z)|^2 \, dz\, ,
\]
and accordingly that 
\begin{equation}\label{eq:comphomeo:2}
 \int_{\phi(\Omega\cap O)}|\nabla_G(u \circ \phi^{(-1)})(z)|^2 \, dz\leq c_2 \int_{\Omega \cap O}|\nabla_G u(z)|^2 \, dz\, .
\end{equation}
We now turn to the second summand $\int_{\phi(\Omega \setminus O)}|\nabla_G(u \circ \phi^{(-1)})(z)|^2 \, dz$ in the right hand side of \eqref{eq:comphomeo:1}.    By Remark \ref{eqnorms},  the $W^{1,2}_G$-norm  is equivalent to the standard Sobolev 
norm of $W^{1,2}$ if we are far from the degenerate set $\{x=0\}$ and thus there exist $c_3,c_4,c_5 >0$ such that
  \begin{equation}\label{eq:comphomeo:3}
  \begin{split}
 \int_{\phi(\Omega \setminus O)}|\nabla_G(u \circ \phi^{(-1)})(z)|^2 \, dz 
  \leq&\,c_3\int_{\phi(\Omega \setminus O)}|\nabla(u \circ \phi^{(-1)})(z)|^2 \, dz\\
   =&\,c_3\int_{\Omega \setminus O}|\nabla u(z)(D\phi(z))^{-1}|^2 |\det D\phi(z)|\, dz\\
   \leq &\,c_4\int_{\Omega \setminus O}|\nabla u(z)|^2 \, dz\\
   \leq &\,c_5\int_{\Omega \setminus O}|\nabla_G u(z)|^2 \, dz\, .
  \end{split}
  \end{equation}
 Thus by \eqref{eq:comphomeo:1} and by summing up the inequalities in \eqref{eq:comphomeo:2} and \eqref{eq:comphomeo:3}, there exists $c_6>0$ such that
    \begin{align*}
 \| |\nabla_GC_{\phi^{(-1)}}[u]|\|_{L^2(\phi(\Omega))}^2
  \leq c_6   \| |\nabla_Gu|\|_{L^2(\Omega)}^2.
  \end{align*}
Since $C_{\phi^{(-1)}}$ is continuous from $L^2(\Omega)$ to  $L^2(\phi(\Omega))$, since $W^{1,2}_{G}(\Omega)$  is continuously embedded in $L^2(\Omega)$, and since $C^1(\Omega) \cap W^{1,2}_{G}(\Omega)$ is dense in  $W^{1,2}_{G}(\Omega)$ (see Franchi, Serapioni and Serra Cassano \cite{FrSeSe96}), one can realize that  $C_{\phi^{(-1)}}$ is continuous from $W^{1,2}_G(\Omega)$ to 
$W^{1,2}_G(\phi(\Omega))$. To show the surjectivity, we take $v \in   W^{1,2}_G(\phi(\Omega))$. Following the same argument  as above together with the inequality $\frac{1}{C}|x| \leq |\phi_x(z)|$  
  one can realize that $v \circ \phi \in W^{1,2}_{G}(\Omega)$ and, clearly, $C_{\phi^{(-1)}}[v \circ \phi]=v$. By the Open Mapping Theorem  $C_{\phi^{(-1)}}$ is a linear homeomorphism from $W^{1,2}_G(\Omega)$ to 
$W^{1,2}_G(\phi(\Omega))$.  

Finally, by a standard mollification argument  $C_{\phi^{(-1)}}[u]  \in W^{1,2}_{G,0}(\phi(\Omega))$ for all $u \in C^\infty_c(\phi(\Omega))$. Therefore,
 since $W^{1,2}_{G,0}(\phi(\Omega))$ is a closed subspace of $W^{1,2}_G(\phi(\Omega))$, we have that 
$C_{\phi^{(-1)}}[u]  \in W^{1,2}_{G,0}(\phi(\Omega))$ for all $u \in W^{1,2}_{G,0}(\Omega)$ and thus 
$C_{\phi^{(-1)}}$ restricts a linear homeomorphism from $W^{1,2}_{G,0}(\Omega)$ onto
 $W^{1,2}_{G,0}(\phi(\Omega))$.
  The last part of the statement is obvious. 
 \end{proof}

 \section{Analyticity results and Hadamard formula}\label{sec:analit}
In this section we perform the shape sensitivity analysis of the Grushin eigenvalue problem.  As in the previous section,  we fix 
$\Omega$ and 
$O$ as in \eqref{Omega_def} and  $\phi \in \mathcal{A}_{\Omega,O}$. We consider
\begin{equation}\label{phiwsp}
\int_{\phi(\Omega)}\nabla_G v \cdot\nabla_G\psi\,dz = \lambda \int_{\phi(\Omega)} v\psi\,dz \qquad \forall 
\psi \in W_{G,0}^{1,2}(\phi(\Omega)),
\end{equation} 
in the unknowns $v \in W^{1,2}_{G,0}(\phi(\Omega))$ and $\lambda \in \mathbb{R}$.
By the results of  Section \ref{sec:tep}, the eigenvalues of equation \eqref{phiwsp} have finite multiplicity and can be represented by means of a divergent sequence 
\[
0 < \lambda_1[\phi] \leq \lambda_2[\phi] \leq \ldots \leq\lambda_j[\phi]\leq \ldots \to +\infty,
\]
where we have set 
\[
 \lambda_j[\phi] :=  \lambda_j[\phi(\Omega)] \qquad \forall j \in \mathbb{N}.
\]
In general, if we want to study the regularity of an eigenvalue upon a parameter, 
which in our case  is  $\phi$, we face a first 
problem. Namely, we cannot expect to prove smooth dependence of the eigenvalues themselves upon the parameter, when the eigenvalues are not simple. This is due to bifurcation phenomena of splitting from a multiple eigenvalue to different eigenvalues of lower multiplicity (cf.~Rellich \cite[p.~37]{Re69}). Hence, to circumvent this problem, we consider the 
 elementary symmetric functions of the eigenvalues. 
 This is the point of view introduced by Lamberti and Lanza de Cristoforis
 \cite{LaLa04} and later adopted in many other works (see, {\it e.g.}, \cite{BuLa13, BuLa15, La09, LaPr13}). 
Clearly, when a certain eigenvalue is simple, for example in the case of the first Grushin eigenvalue under the 
 assumption that $\Omega \setminus \{x=0\}$ is connected (see Monticelli and Payne \cite[Theorem 6.4]{MoPa09}), 
 our regularity result for the symmetric functions of the eigenvalues implies that the same regularity is valid for the eigenvalue.

 To perform this strategy, we need to introduce two subspaces of $\mathcal{A}_{\Omega,O}$.  Let 
 $F \subseteq \mathbb{N}$ be a finite set of indexes and we consider the subset of 
 $\mathcal{A}_{\Omega,O}$ of those maps  for which the eigenvalues with index in $F$ do not coincide with the eigenvalues with index outside $F$. That is
 \begin{align*}
 \mathcal{A}^F_{\Omega,O} := \big\{\phi \in  \mathcal{A}_{\Omega,O} :  \lambda_n[\phi] \neq \lambda_m[\phi], \,
 \forall n \in F, \,\forall m \in \mathbb{N} \setminus F\big\}.
 \end{align*}
 We find also convenient to consider the set  $\Theta^F_{\Omega,O}$ of those maps in  
 $\mathcal{A}^F_{\Omega,O}$
 such that all the eigenvalues with index in $F$ coincide. Namely, 
 \begin{align*}
 \Theta^F_{\Omega,O} := \big\{\phi \in  \mathcal{A}^F_{\Omega,O} :  \lambda_n[\phi] = \lambda_m[\phi], \,
 \forall n,m \in F\big\}.
 \end{align*}
 For $\phi \in \mathcal{A}_{\Omega,O}$ we introduce the following two operators. 
 \begin{itemize}
 \item[i)] $J_\phi$ is the map from $L^2(\Omega)$ to $(W^{1,2}_{G,0}(\Omega))'$ defined by
 \[
 J_\phi[u][v] := \int_{\Omega}uv |\det D\phi| \,dz \qquad \forall u \in L^2(\Omega),\, \forall v \in W^{1,2}_{G,0}(\Omega).
 \]
  \item[ii)]  $\Delta_{G,\phi}$ is the map from $W^{1,2}_{G,0}(\Omega)$ to $(W^{1,2}_{G,0}(\Omega))'$  defined by
 \[
\Delta_{G,\phi}[u][v] := -\int_{\Omega}\nabla u  (D\phi)^{-1} I_G(\phi) (\nabla v(D\phi)^{-1} I_G(\phi))^t|\det D\phi| \,dz \quad \forall u,v \in W^{1,2}_{G,0}(\Omega).
 \]
 \end{itemize}
\begin{remark}
Let $\phi \in \mathcal{A}_{\Omega,O}$. By performing a change of variable, one can readily verify that
\[
J_\phi = C_{\phi^{(-1)}}^t \circ J \circ C_{\phi^{(-1)}} \qquad 
\Delta_{G,\phi}= C_{\phi^{(-1)}}^t \circ \Delta_{G} \circ C_{\phi^{(-1)}},
\]
being $J$ and $\Delta_{G}$ the operators defined in \eqref{jdef} and \eqref{DGdef} with $U= \phi(\Omega)$, 
respectively, and $C_{\phi^{(-1)}}$ the $\phi$-pushforward operator introduced in Lemma \ref{comphomeo}.
\end{remark}
Since $\Delta_{G}$ is a linear homeomorphism from $W^{1,2}_{G,0}(\phi(\Omega))$ onto 
$(W^{1,2}_{G,0}(\phi(\Omega)))'$ and since $J$ is a linear and continuous injection 
from $L^2(\phi(\Omega))$ to $(W^{1,2}_{G,0}(\phi(\Omega)))'$, Lemma \ref{comphomeo} immediately implies the following.
\begin{corollary}
 Let $\Omega$
and $O$ be as in \eqref{Omega_def} and  $\phi \in \mathcal{A}_{\Omega,O}$.
 Then the operator $\Delta_{G,\phi}$ is a linear homeomorphism 
 $W^{1,2}_{G,0}(\Omega)$ onto 
$(W^{1,2}_{G,0}(\Omega))'$ and the operator $J_\phi$ is a linear and 
continuous injection 
from $L^2(\Omega)$ to $(W^{1,2}_{G,0}(\Omega))'$.
\end{corollary}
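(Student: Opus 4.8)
The plan is to obtain the statement as an immediate functional-analytic corollary of Lemma~\ref{comphomeo}, exploiting the factorizations recorded in the Remark just above, namely $J_\phi = C_{\phi^{(-1)}}^t \circ J \circ C_{\phi^{(-1)}}$ and $\Delta_{G,\phi} = C_{\phi^{(-1)}}^t \circ \Delta_{G} \circ C_{\phi^{(-1)}}$, where on the right-hand sides $J$ and $\Delta_G$ are the operators of \eqref{jdef} and \eqref{DGdef} built with $U=\phi(\Omega)$. Should one not wish to rely on the Remark, these identities are verified directly by the change of variables $z=\phi(w)$ in the defining integrals on $\phi(\Omega)$; this is licit for $W^{1,2}_{G,0}$-functions since, by Lamberti and Lanza de Cristoforis \cite[Lemma~3.11]{LaLa04}, $\phi \in \mathcal{A}_{\Omega,O}$ is bi-Lipschitz with $\inf_\Omega|\det D\phi|>0$, and one computes $\nabla_G(u\circ\phi^{(-1)})(\phi(w)) = \nabla u(w)(D\phi(w))^{-1}I_G(\phi(w))$ exactly as in the proof of Lemma~\ref{comphomeo}.

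First I would collect the mapping properties of the three building blocks. By Lemma~\ref{comphomeo}, $C_{\phi^{(-1)}}$ is a linear homeomorphism of $L^2(\Omega)$ onto $L^2(\phi(\Omega))$ which restricts to a linear homeomorphism of $W^{1,2}_{G,0}(\Omega)$ onto $W^{1,2}_{G,0}(\phi(\Omega))$, with inverse $C_\phi$; consequently its transpose $C_{\phi^{(-1)}}^t$ is a linear homeomorphism of $(W^{1,2}_{G,0}(\phi(\Omega)))'$ onto $(W^{1,2}_{G,0}(\Omega))'$ (the transpose of a Banach-space isomorphism is an isomorphism, with inverse $(C_\phi)^t$). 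By the analysis of Section~\ref{sec:tep} applied with $U=\phi(\Omega)$, the operator $\Delta_G$ is a linear homeomorphism of $W^{1,2}_{G,0}(\phi(\Omega))$ onto $(W^{1,2}_{G,0}(\phi(\Omega)))'$, and $J$ is a linear continuous injection of $L^2(\phi(\Omega))$ into $(W^{1,2}_{G,0}(\phi(\Omega)))'$.

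Then the conclusion follows by composing. For $\Delta_{G,\phi}$: it equals the composite $W^{1,2}_{G,0}(\Omega)\to W^{1,2}_{G,0}(\phi(\Omega))\to (W^{1,2}_{G,0}(\phi(\Omega)))'\to (W^{1,2}_{G,0}(\Omega))'$, whose three arrows are $C_{\phi^{(-1)}}$, $\Delta_G$, $C_{\phi^{(-1)}}^t$, each a linear homeomorphism; hence $\Delta_{G,\phi}$ is a linear homeomorphism of $W^{1,2}_{G,0}(\Omega)$ onto $(W^{1,2}_{G,0}(\Omega))'$, with inverse $C_\phi\circ\Delta_G^{(-1)}\circ(C_\phi)^t$. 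For $J_\phi$: it equals the composite $L^2(\Omega)\to L^2(\phi(\Omega))\to (W^{1,2}_{G,0}(\phi(\Omega)))'\to (W^{1,2}_{G,0}(\Omega))'$, with arrows $C_{\phi^{(-1)}}$, $J$, $C_{\phi^{(-1)}}^t$; the outer two are homeomorphisms and $J$ is a continuous injection, so $J_\phi$ is linear, continuous and injective, being a composition of continuous maps two of which are bijective and the middle one injective.

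The only point demanding a little care is the dualization step, i.e. checking that $C_{\phi^{(-1)}}^t$ is indeed a homeomorphism between the correct dual spaces so that $\Delta_{G,\phi}$ and $J_\phi$ genuinely take values in $(W^{1,2}_{G,0}(\Omega))'$; this amounts to recalling that transposition is functorial and carries isomorphisms to isomorphisms. Everything else is bookkeeping, and I expect no real obstacle: the substantive work was done in Lemma~\ref{comphomeo}, and the present statement is its routine corollary.
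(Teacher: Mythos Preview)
Your proposal is correct and follows exactly the same approach as the paper: the paper states that the corollary follows immediately from Lemma~\ref{comphomeo} together with the factorizations $J_\phi = C_{\phi^{(-1)}}^t \circ J \circ C_{\phi^{(-1)}}$ and $\Delta_{G,\phi}= C_{\phi^{(-1)}}^t \circ \Delta_{G} \circ C_{\phi^{(-1)}}$ from the preceding Remark and the known properties of $\Delta_G$ and $J$ on $\phi(\Omega)$. If anything, you have spelled out the dualization step more carefully than the paper does.
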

Next, in order to reformulate problem \eqref{phiwsp} into a spectral problem for a compact self-adjoint operator, we set $T_{G,\phi}$  to be the map from $W^{1,2}_{G,0}(\Omega)$ to itself defined by
 \begin{equation}\label{def:Tphi}
 T_{G,\phi}[u] := - \Delta_{G,\phi}^{(-1)} \circ J_\phi \circ i[u] \qquad \forall u \in W^{1,2}_{G,0}(\Omega).
 \end{equation}
Here, $i$ denotes the embedding of $W^{1,2}_{G,0}(\Omega)$ in $L^2(\Omega)$.
 Clearly, equation \eqref{phiwsp}  is equivalent to 
 \[
 T_{G,\phi}[u] = \mu u
 \]
 with $u = v \circ \phi$ and $\mu = \lambda^{-1}$. Furthermore, we set
 \begin{equation}\label{Qdefphi}
 Q_{G,\phi}[u,v] := -\Delta_{G,\phi}[u][v] \qquad \forall u,v \in   W^{1,2}_{G,0}(\Omega).
 \end{equation}
 Adapting the same computations of the proof of Lemma \ref{comphomeo}, it is easily seen that $Q_{G,\phi}$ is a scalar product on $W^{1,2}_{G,0}(\Omega)$ 
 which induces a norm equivalent to the standard one in $W^{1,2}_{G,0}(\Omega)$. 
 
We now consider the operator $T_{G,\phi}$  acting on  $\left(W^{1,2}_{G,0}(\Omega),Q_{G,\phi}\right)$ and we prove that it is a compact self-adjoint operator and that it depends real analytically on $\phi$.
Before doing this, we need nome notation.
If $\mathcal{X}$, $\mathcal{Y}$ are two Banach spaces, we denote by $\mathcal{L}(\mathcal{X},\mathcal{Y})$ the space of linear and continuous operators 
  from $\mathcal{X}$ to $\mathcal{Y}$, we set $\mathcal{L}(\mathcal{X}) := \mathcal{L}(\mathcal{X},\mathcal{X})$ and we denote by $\mathcal{B}_s(\mathcal{X})$ the space 
  of bilinear symmetric forms on $\mathcal{X}$.  These spaces are endowed with their standard norms.
 \begin{proposition}\label{propTp}
  Let  $\Omega$ and
$O$ be as in \eqref{Omega_def} and  $\phi \in \mathcal{A}_{\Omega,O}$.
  Then 
  \begin{itemize}
  \item[(i)] $T_{G,\phi}$ is a compact self-adjoint operator in 
  $\left(W^{1,2}_{G,0}(\Omega),Q_{G,\phi}\right)$.
  \item[(ii)] The map from $\mathcal{A}_{\Omega,O}$ to 
  $\mathcal{L}\left(W^{1,2}_{G,0}(\Omega)\right) \times \mathcal{B}_s\left(W^{1,2}_{G,0}(\Omega)\right)
  $ which takes $\phi$ to $(T_{G,\phi}, Q_{G,\phi})$ is real analytic.
  \end{itemize}
 \end{proposition}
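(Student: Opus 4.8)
The plan is to prove the two assertions in turn, the second being the substantive one. For (i), compactness of $T_{G,\phi}$ is immediate from \eqref{def:Tphi}: the embedding $i$ is compact by Theorem \ref{RK}, while it was recalled above that $\Delta_{G,\phi}$ is a linear homeomorphism onto $(W^{1,2}_{G,0}(\Omega))'$ and that $J_\phi$ is continuous, so $T_{G,\phi}$ is the composition of bounded operators with a compact one. For self-adjointness with respect to $Q_{G,\phi}$, I would compute, for $u,v\in W^{1,2}_{G,0}(\Omega)$, using \eqref{Qdefphi} and \eqref{def:Tphi} (the two minus signs cancelling),
\[
Q_{G,\phi}[T_{G,\phi}u,v]=\Delta_{G,\phi}\bigl[\Delta_{G,\phi}^{(-1)}(J_\phi\circ i[u])\bigr][v]=(J_\phi\circ i[u])[v]=\int_\Omega uv\,|\det D\phi|\,dz,
\]
which is symmetric in $u$ and $v$; since $Q_{G,\phi}$ is itself symmetric, this yields $Q_{G,\phi}[T_{G,\phi}u,v]=Q_{G,\phi}[u,T_{G,\phi}v]$.

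For (ii) I would reduce everything to the real analyticity of the two maps $\phi\mapsto\Delta_{G,\phi}$ and $\phi\mapsto J_\phi$, with values in $\mathcal L\bigl(W^{1,2}_{G,0}(\Omega),(W^{1,2}_{G,0}(\Omega))'\bigr)$ and $\mathcal L\bigl(L^2(\Omega),(W^{1,2}_{G,0}(\Omega))'\bigr)$ respectively. Granting this, the statement follows from the usual stability properties of real analyticity: the inversion map is real analytic on the open subset of invertible operators, and the values of $\phi\mapsto\Delta_{G,\phi}$ are invertible, so $\phi\mapsto\Delta_{G,\phi}^{(-1)}$ is real analytic; composition of bounded operators is continuous and bilinear, hence real analytic; and the canonical identification of $\mathcal B_s(W^{1,2}_{G,0}(\Omega))$ with a closed subspace of $\mathcal L\bigl(W^{1,2}_{G,0}(\Omega),(W^{1,2}_{G,0}(\Omega))'\bigr)$ is a linear homeomorphism. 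Composing, $\phi\mapsto T_{G,\phi}=-\Delta_{G,\phi}^{(-1)}\circ J_\phi\circ i$ and $\phi\mapsto Q_{G,\phi}=-\Delta_{G,\phi}$ are real analytic into $\mathcal L(W^{1,2}_{G,0}(\Omega))$ and $\mathcal B_s(W^{1,2}_{G,0}(\Omega))$. The elementary ingredients are handled as follows. The map $\phi\mapsto D\phi$ is linear and continuous from $\mathrm{Lip}(\Omega)^N$ to $L^\infty(\Omega)^{N\times N}$; since $\det$ and $\mathrm{adj}$ are polynomial in the matrix entries and multiplication is continuous and bilinear on $L^\infty(\Omega)$, the maps $\phi\mapsto\det D\phi$ and $\phi\mapsto\mathrm{adj}\,D\phi$ are real analytic. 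As $\inf_\Omega|\det D\phi|>0$ and $\det D\phi$ has a.e.\ constant sign on $\Omega$ for $\phi\in\mathcal A_{\Omega,O}$ (see \cite{LaLa04}), the functions $t\mapsto1/t$ and $t\mapsto|t|$ are real analytic on a neighbourhood of the essential range of $\det D\phi$, so $\phi\mapsto(D\phi)^{-1}=(\mathrm{adj}\,D\phi)/\det D\phi$ and $\phi\mapsto|\det D\phi|$ are real analytic into $L^\infty(\Omega)^{N\times N}$ and $L^\infty(\Omega)$ respectively (real analyticity being local, one argues near a fixed $\phi$). Since $(u,v)\mapsto\int_\Omega uv\,m\,dz$ is bilinear and continuous on $L^2(\Omega)\times W^{1,2}_{G,0}(\Omega)$ and linear continuous in $m\in L^\infty(\Omega)$, composing with $\phi\mapsto|\det D\phi|$ gives that $\phi\mapsto J_\phi$ is real analytic.

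The main point — and the step I expect to be the main obstacle — is the real analyticity of $\phi\mapsto\Delta_{G,\phi}$, where the degenerate weight must be dealt with. I would split $\Omega=(\Omega\cap O)\cup(\Omega\setminus O)$, noting that $\overline{\Omega\setminus O}\cap\{x=0\}=\varnothing$, hence $\inf_{\Omega\setminus O}|x|>0$ and, by Remark \ref{eqnorms}, the restriction $W^{1,2}_{G,0}(\Omega)\to W^{1,2}(\Omega\setminus O)$ is continuous. Expanding the transpose and using $I_G=I_G^t$, the integrand of $\Delta_{G,\phi}[u][v]$ is $\nabla u\,(D\phi)^{-1}I_G(\phi)^2(D\phi)^{-t}(\nabla v)^t\,|\det D\phi|$, where $I_G(\phi)^2$ is the block-diagonal matrix with blocks $I_{h\times h}$ and $|\phi_x|^{2s}I_{k\times k}$. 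On $\Omega\setminus O$ the entry $|\phi_x|^{2s}=(|\phi_x|^2)^s$ is polynomial in $\phi$ and bounded, so $\phi\mapsto(D\phi)^{-1}I_G(\phi)^2(D\phi)^{-t}|\det D\phi|$ is real analytic into $L^\infty(\Omega\setminus O)^{N\times N}$, and $(C,u,v)\mapsto\int_{\Omega\setminus O}\nabla u\,C\,(\nabla v)^t\,dz$ is linear continuous in $C$ and, after restriction to $\Omega\setminus O$, bilinear continuous on $W^{1,2}_{G,0}(\Omega)^2$. On $\Omega\cap O$, where $\phi=(\tilde\phi_x\circ\pi_x,\tilde\phi_y\circ\pi_y)$, the matrix $D\phi$ is block diagonal a.e., so — exactly as in the computation carried out in the proof of Lemma \ref{comphomeo} — $(D\phi)^{-1}$ commutes with $I_G(\phi)$; moreover, by \eqref{defnablaG}, $\nabla u\,I_G(\phi)=\nabla_G u\,R_\phi$ a.e., where $R_\phi$ is the block-diagonal matrix with blocks $I_{h\times h}$ and $w_\phi I_{k\times k}$, $w_\phi:=|\phi_x|^s/|x|^s$, and $R_\phi$ in turn commutes with the block-diagonal matrix $(D\phi)^{-1}(D\phi)^{-t}$. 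Hence on $\Omega\cap O$ the above integrand equals $\nabla_G u\,B_\phi\,(\nabla_G v)^t$ with $B_\phi:=(D\phi)^{-1}(D\phi)^{-t}R_\phi^2\,|\det D\phi|$, where only $R_\phi^2$ — the block-diagonal matrix with blocks $I_{h\times h}$ and $(|\phi_x|^2/|x|^2)^s I_{k\times k}$ — enters. Now $|\phi_x|^2/|x|^2=\sum_{i=1}^h(\phi_{x_i}/|x|)^2$, and the map $\phi\mapsto\phi_{x_i}/|x|$ is linear and bounded from $L_{\Omega,O}$ into $L^\infty(\Omega\cap O)$, because $\tilde\phi_x(0)=0$ forces $|\phi_{x_i}(z)|\le C|x|$ a.e.\ with $C$ controlled by the Lipschitz constant of $\tilde\phi_x$ (cf.\ the remarks following \eqref{Omega_def}); hence $\phi\mapsto R_\phi^2$ and $\phi\mapsto B_\phi$ are real analytic into the corresponding $L^\infty$ spaces. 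Since $(B,u,v)\mapsto\int_{\Omega\cap O}\nabla_G u\,B\,(\nabla_G v)^t\,dz$ is linear continuous in $B\in L^\infty(\Omega\cap O)^{N\times N}$ and bilinear continuous in $(u,v)\in W^{1,2}_{G,0}(\Omega)^2$ (one has $|\nabla_G u\,B\,(\nabla_G v)^t|\le\|B\|_{L^\infty}|\nabla_G u|\,|\nabla_G v|$), composing these continuous multilinear maps with the real analytic maps above and adding the two contributions yields that $\phi\mapsto\Delta_{G,\phi}$ is real analytic into $\mathcal L\bigl(W^{1,2}_{G,0}(\Omega),(W^{1,2}_{G,0}(\Omega))'\bigr)$, which closes the argument.
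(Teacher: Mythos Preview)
Your proof is correct and follows the same overall approach as the paper: part (i) is argued identically, and for part (ii) both you and the paper reduce to the real analyticity of $\phi\mapsto\Delta_{G,\phi}$, $\phi\mapsto J_\phi$, and $\phi\mapsto Q_{G,\phi}$, then invoke the analyticity of operator inversion and composition. The only difference is that where the paper writes ``it is easily seen'' for the analyticity of these building blocks, you supply the details --- in particular the splitting $\Omega=(\Omega\cap O)\cup(\Omega\setminus O)$ and the factorisation via $R_\phi$ on $\Omega\cap O$ that absorbs the degenerate weight into an $L^\infty$ coefficient --- which is precisely the content that makes the ``easily seen'' honest.
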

 \begin{proof}
 First we consider statement (i).   The compactness of $T_{G,\phi}$ is a
  consequence of the compactness of the embedding of $W^{1,2}_{G,0}(\Omega)$ in  $L^2(\Omega)$. For the 
  self-adjointess we note that
  \begin{align*}
  Q_{G,\phi}[T_{G,\phi} u,v] = -\Delta_{G,\phi}[T_{G,\phi} u][v] =&
  -\Delta_{G,\phi}\left[-\Delta_{G,\phi}^{(-1)} \circ J_\phi \circ i[u]\right][v] \\
  =& \,J_\phi[i[u]][v] \qquad\qquad \forall u,v \in W^{1,2}_{G,0}(\Omega).
  \end{align*}
  
Next, we prove statement (ii). It is easily seen that the maps which take $\phi$ to $\Delta_{G,\phi}$, 
$J_\phi$ and $Q_{G,\phi}$ from $\mathcal{A}_{\Omega,O}$ to 
$\mathcal{L}(W^{1,2}_{G,0}(\Omega), (W^{1,2}_{G,0}(\Omega))')$, 
$\mathcal{L}(L^2(\Omega), (W^{1,2}_{G,0}(\Omega))')$ and $\mathcal{B}_s(W^{1,2}_{G,0}(\Omega))$, respectively, are real analytic. Then, since the map which takes an invertible operator to its inverse 
is real analytic we can conclude that $T_{G,\phi}$ depends real analytically on $\phi$.
 \end{proof}
We are ready to prove that the elementary symmetric functions of the eigenvalues depends real analytically upon the domain's shape $\phi$. 
\begin{theorem}\label{thm:realanalsym}
 Let  $\Omega$ and 
$O$ be as in \eqref{Omega_def}.  Let $F$ be a finite nonempty subset 
 of $\mathbb{N}$. 
 Let $\tau \in \{1,\ldots,|F|\}$. Then  $\mathcal{A}^F_{\Omega,O}$ is open in  
 $L_{\Omega,O}$ and the map $\Lambda_{F,\tau}$ from $\mathcal{A}^F_{\Omega,O}$ to $\mathbb{R}$   
defined by
\[
\Lambda_{F,\tau}[\phi] := \sum_{\substack{j_1,\ldots,j_\tau \in F \\ j_1<\cdots<j_\tau}}\lambda_{j_1}[\phi] \cdots \lambda_{j_\tau}[\phi] \qquad \forall \phi \in \mathcal{A}^F_{\Omega,O}
\]
is real analytic.
\end{theorem}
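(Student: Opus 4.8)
The plan is to transplant the whole spectral problem to the fixed domain $\Omega$ via the $\phi$-pullback, as prepared in Section~\ref{sec:dom}, and then invoke an abstract real-analyticity result for symmetric functions of eigenvalues of a real-analytic family of compact self-adjoint operators. By Proposition~\ref{propTp}, the map $\phi \mapsto (T_{G,\phi}, Q_{G,\phi})$ from $\mathcal{A}_{\Omega,O}$ to $\mathcal{L}(W^{1,2}_{G,0}(\Omega)) \times \mathcal{B}_s(W^{1,2}_{G,0}(\Omega))$ is real analytic, and for each $\phi$ the operator $T_{G,\phi}$ is compact and self-adjoint on the Hilbert space $(W^{1,2}_{G,0}(\Omega), Q_{G,\phi})$, with positive eigenvalues $\mu_j[\phi] = \lambda_j[\phi]^{-1}$ (counted with multiplicity, in decreasing order). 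The eigenvalues with index in $F$ for the Grushin problem correspond exactly to a finite cluster of consecutive eigenvalues of $T_{G,\phi}$ that, on $\mathcal{A}^F_{\Omega,O}$, stays separated from the rest of the spectrum.

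First I would recall the abstract tool: this is precisely the setting of Lamberti and Lanza de Cristoforis \cite{LaLa04} (see also \cite{BuLa13, BuLa15, La09, LaPr13}), where one shows that if $\phi \mapsto (T_\phi, Q_\phi)$ is real analytic with $T_\phi$ compact self-adjoint with respect to the scalar product $Q_\phi$, then the set where the eigenvalues indexed by $F$ do not collide with the others is open, and on that set the elementary symmetric functions of $\{\mu_j[\phi] : j \in F\}$ are real analytic. The openness of $\mathcal{A}^F_{\Omega,O}$ in $L_{\Omega,O}$ follows from continuity of the eigenvalues (a consequence of the continuity of $\phi\mapsto(T_{G,\phi},Q_{G,\phi})$ together with standard spectral perturbation estimates): if for $\phi_0 \in \mathcal{A}^F_{\Omega,O}$ the $F$-eigenvalues are separated by a positive distance from the others, the same holds in a neighbourhood. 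For the analyticity, the key device is to fix $\phi_0 \in \mathcal{A}^F_{\Omega,O}$, take a closed contour $\gamma$ in $\mathbb{C}$ enclosing exactly the eigenvalues $\{\mu_j[\phi_0] : j \in F\}$ of $T_{G,\phi_0}$ and no other point of the spectrum, and consider the Riesz spectral projector
\[
P_\phi := \frac{1}{2\pi i}\int_\gamma (\zeta - T_{G,\phi})^{-1}\, d\zeta,
\]
which is well-defined and real analytic in $\phi$ near $\phi_0$ because resolvents depend analytically on the operator and the contour can be kept fixed. The finite-dimensional range of $P_\phi$ has constant dimension $|F|$, and the restriction of $T_{G,\phi}$ to this range is a real-analytic family of operators on a (moving, but analytically trivializable) finite-dimensional space; the elementary symmetric functions of its eigenvalues are polynomial in its matrix entries (coefficients of the characteristic polynomial), hence real analytic in $\phi$. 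Translating back through $\mu_j = \lambda_j^{-1}$, the functions $\Lambda_{F,\tau}[\phi]$ are real-analytic combinations of these, so they too are real analytic on $\mathcal{A}^F_{\Omega,O}$.

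The step requiring the most care is the bookkeeping linking the abstract symmetric functions of the $\mu_j$'s to the stated $\Lambda_{F,\tau}$ in terms of the $\lambda_j$'s: one must check that the elementary symmetric functions of $\{\lambda_j[\phi]^{-1} : j\in F\}$ being real analytic implies the same for the elementary symmetric functions of $\{\lambda_j[\phi] : j\in F\}$. This is elementary once one observes that $\lambda_{j_1}\cdots\lambda_{j_\tau}$ summed over $\tau$-subsets of $F$ equals the product $\prod_{j\in F}\lambda_j$ times the elementary symmetric function of degree $|F|-\tau$ of the $\lambda_j^{-1}$, and $\prod_{j\in F}\lambda_j = \big(\prod_{j\in F}\mu_j\big)^{-1}$ is the reciprocal of the top symmetric function of the $\mu_j$'s, which is nonzero and real analytic; hence $\Lambda_{F,\tau}$ is real analytic. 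Aside from this, the only genuinely non-formal input is making sure the Riesz-projector argument is applied in the Hilbert space whose inner product $Q_{G,\phi}$ itself varies with $\phi$ — this is handled, following \cite{LaLa04}, by noting that $T_{G,\phi}$ is also self-adjoint (indeed bounded) with respect to the fixed inner product $Q_{G,\phi_0}$ up to the analytic perturbation, so the standard resolvent calculus applies verbatim; alternatively one works directly on the dual pair $(W^{1,2}_{G,0}(\Omega),(W^{1,2}_{G,0}(\Omega))')$ with the operators $\Delta_{G,\phi}$ and $J_\phi$, which is exactly the formulation already set up before the statement.
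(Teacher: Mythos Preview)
Your proposal is correct and follows essentially the same route as the paper: both invoke Proposition~\ref{propTp} to get a real-analytic family of compact self-adjoint operators $(T_{G,\phi},Q_{G,\phi})$, apply the abstract result of Lamberti and Lanza de Cristoforis \cite[Theorem~2.30]{LaLa04} to obtain openness of $\mathcal{A}^F_{\Omega,O}$ and real analyticity of the elementary symmetric functions $M_{F,\tau}$ of the $\mu_j$'s, and then pass to the $\lambda_j$'s via the identity $\Lambda_{F,\tau}=M_{F,|F|-\tau}/M_{F,|F|}$. Your sketch of the Riesz-projector mechanism simply unpacks what that cited theorem contains, so there is no substantive difference in approach.
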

 \begin{proof}
We denote by $\{\mu_j[\phi]\}_{j \in \mathbb{N}\setminus\{0\}}$ the set of eigenvalues of $T_{G,\phi}$. As
 we have already pointed out $\mu_j[\phi] = (\lambda_j[\phi])^{-1}$. Hence, the set $\mathcal{A}^F_{\Omega,O}$
 coincides with the set 
 \[
 \big\{\phi \in \mathcal{A}_{\Omega,O}  :  \mu_n[\phi] \neq \mu_m[\phi], \,
 \forall n \in F, \,\forall m \in \mathbb{N} \setminus F \big\}.
 \]
 By Proposition \ref{propTp}, $T_{G,\phi}$ is self-adjoint with respect to the scalar product $Q_{G,\phi}$ 
 and, moreover, both  $T_{G,\phi}$ and $Q_{G,\phi}$  depend analytically on 
 $\phi \in \mathcal{A}^F_{\Omega,O}$. Thus, Lamberti and Lanza de Cristoforis  \cite[Theorem 2.30]{LaLa04} implies 
 that $\mathcal{A}^F_{\Omega,O}$ is open in  $L_{\Omega,O}$ and the map 
 $M_{F,\tau}$ from $\mathcal{A}^F_{\Omega,O}$ to $\mathbb{R}$ defined by
 \begin{equation}\label{Mdef}
 M_{F,\tau}[\phi] :=  \sum_{\substack{j_1,\ldots,j_\tau \in F \\ j_1<\cdots<j_\tau}}\mu_{j_1}[\phi] \cdots \mu_{j_\tau}[\phi] \qquad \forall \phi \in \mathcal{A}^F_{\Omega,O}
 \end{equation}
 is real analytic. If we set $M_{F,0}[\phi] := 1$ for all $\phi \in \mathcal{A}^F_{\Omega,O}$, one can 
 readily verify that 
 \begin{equation}\label{LM}
 \Lambda_{F,\tau}[\phi]  = \frac{M_{F,|F|-\tau}[\phi]}{M_{F,|F|}[\phi]} 
 \qquad \forall \phi \in \mathcal{A}^F_{\Omega,O}.
 \end{equation}
 Accordingly the statement follows.
 \end{proof}
 In view of the applications,  once we have considered the regularity of the elementary symmetric functions, it is important to have an explicit formula for their shape differential. Thus, our next step is to prove an Hadamard-type formula for the shape differential of the elementary symmetric functions.
 \begin{theorem}\label{hadf}
Let  $\Omega$ and  $O$ be as in \eqref{Omega_def}.
   Let $F$ be a finite nonempty subset 
 of $\mathbb{N}$. Let $\tau \in \{1,\ldots,|F|\}$.  
Let $\tilde \phi \in \Theta^F_{\Omega,O}$ and  let
 $\lambda_F[\tilde \phi]$ be the common value of all the eigenvalues $\{\lambda_j[\tilde \phi]\}_{j \in F}$. 
 Let $\{v_l\}_{l \in F}$ be an orthonormal basis in 
 $(W^{1,2}_{G,0}(\tilde \phi(\Omega)),Q_{G})$
  of the eigenspace associated with $\lambda_F[\tilde \phi]$.
Then  the Frech\'et differential of the map  $\Lambda_{F,\tau}$ at the
 point $\tilde \phi$ is delivered by the formula
 \begin{align}\label{hadf1}
 d_{|\phi = \tilde \phi}(\Lambda_{F,\tau})[\psi]= \, &- \lambda_F^{\tau}[\tilde \phi]\binom{|F|-1}{\tau-1}
  \sum_{l \in F}  \bigg\{  \int_{\tilde \phi(\Omega)}(\lambda_F[\tilde \phi]v_l^2- |\nabla_Gv_l|^2 ) \mathrm{div} \left(\psi \circ  {\tilde\phi}^{(-1)}\right) \,dz  \\ \nonumber
  &\qquad\quad+\int_{\tilde \phi(\Omega)}(\nabla v_l)  (D(\psi \circ \tilde \phi^{(-1)})I_G^2 + I_G^2D(\psi \circ \tilde \phi^{(-1)})^t)(\nabla v_l)^t  \,dz\\\nonumber
  &\qquad\quad-\int_{\tilde \phi(\Omega)}2s|x|^{2s-2}|\nabla_y v_l|^2 x \cdot (\psi \circ \tilde \phi^{(-1)})_x \,dz
 \bigg\} \qquad \forall \psi \in L_{\Omega,O}.
  \end{align}
 \end{theorem}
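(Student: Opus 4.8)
The plan is to reduce the computation of the shape differential of $\Lambda_{F,\tau}$ to the already-established real analytic dependence of the pair $(T_{G,\phi},Q_{G,\phi})$ on $\phi$ (Proposition \ref{propTp}) together with the abstract differentiation formula for symmetric functions of eigenvalues of Lamberti and Lanza de Cristoforis \cite[Theorem 2.30]{LaLa04}. Recall from the proof of Theorem \ref{thm:realanalsym} that $\Lambda_{F,\tau}[\phi] = M_{F,|F|-\tau}[\phi]/M_{F,|F|}[\phi]$, where the $M_{F,\sigma}$ are the symmetric functions of the eigenvalues $\mu_j[\phi]$ of $T_{G,\phi}$; at a point $\tilde\phi \in \Theta^F_{\Omega,O}$ the abstract theorem of \cite{LaLa04} gives the derivative of $M_{F,\sigma}$ in terms of the quadratic form $\psi \mapsto (\dot{Q}_{G,\psi} - \mu_F \dot{\text{(mass form)}})[v_l,v_l]$ evaluated on an $L^2$-orthonormal eigenbasis $\{v_l\}_{l\in F}$, where the dot denotes the Fréchet differential at $\tilde\phi$ in direction $\psi$ pulled back to $\Omega$. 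After a change of variables by $\tilde\phi$ one may work on $\tilde\phi(\Omega)$ directly; it is convenient to set $\rho := \psi \circ \tilde\phi^{(-1)}$, defined on $\tilde\phi(\Omega)$, so that differentiating the family of pulled-back problems at $\tilde\phi$ is the same as differentiating, at the identity, the family obtained by composing with $\mathrm{Id} + t\rho$ on $\tilde\phi(\Omega)$. So the first step is to record this reduction and thereby reduce everything to computing two elementary derivatives at $t=0$: that of $t\mapsto \int (\text{det }D(\mathrm{Id}+t\rho))\, v_l^2$ — the mass term — and that of $t \mapsto \int \nabla v_l\, (D(\mathrm{Id}+t\rho))^{-1} I_G(\mathrm{Id}+t\rho)\, (\cdots)^t\, \det D(\mathrm{Id}+t\rho)$ — the Dirichlet (Grushin) form term.

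The second step is to carry out these two derivatives. For the mass term, $\frac{d}{dt}\big|_{0}\det D(\mathrm{Id}+t\rho) = \mathrm{div}\,\rho$, producing the term $\lambda_F \int v_l^2\, \mathrm{div}\,\rho$. For the Dirichlet term there are three contributions, corresponding exactly to the three integrals in \eqref{hadf1}: differentiating $\det D(\mathrm{Id}+t\rho)$ again gives $-\int |\nabla_G v_l|^2 \,\mathrm{div}\,\rho$ (combining with the mass term into the first integral, with the factor $\lambda_F v_l^2 - |\nabla_G v_l|^2$); differentiating $(D(\mathrm{Id}+t\rho))^{-1}$ via $\frac{d}{dt}\big|_0 (I + tD\rho)^{-1} = -D\rho$ and symmetrizing (since the form is evaluated on the same $v_l$ on both slots) produces the term with $D\rho\, I_G^2 + I_G^2\, D\rho^t$; and differentiating $I_G(\mathrm{Id}+t\rho)$, recalling that $I_G(z)$ depends on $z$ only through $|x|^s$ in its lower block, gives $\frac{d}{dt}\big|_0 |x + t\rho_x|^{2s} = 2s|x|^{2s-2}\, x\cdot \rho_x$ in the $y$-block, which yields the third integral $-\int 2s|x|^{2s-2}|\nabla_y v_l|^2\, x\cdot\rho_x$. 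One should be slightly careful here that $\rho = \rho_x$ has the product structure near $\{x=0\}$ and that $|x|^{2s-2}|\nabla_y v_l|^2|x|$ is integrable, so these manipulations are licit; the fact that $s \in \mathbb{N}$ makes $|x|^{2s}$ and $|x|^{2s-2}|x|^2$ polynomial and smooths over the apparent singularity.

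The third step is bookkeeping: the abstract formula of \cite{LaLa04} produces, at a point of $\Theta^F_{\Omega,O}$, the derivative of $M_{F,\sigma}$ as $-\mu_F^{\,?}\binom{|F|-1}{\sigma-1}\sum_{l\in F}(\cdots)$ in the eigenvalues $\mu_j = 1/\lambda_j$; one then uses the identity \eqref{LM}, the fact that $M_{F,|F|}[\tilde\phi] = \lambda_F^{-|F|}$ and $M_{F,|F|-\tau}[\tilde\phi] = \binom{|F|}{\tau}\lambda_F^{-(|F|-\tau)}$, together with the quotient rule and the combinatorial identity $\binom{|F|}{\tau}\frac{\tau}{|F|} = \binom{|F|-1}{\tau-1}$, to convert the derivative in the $\mu$'s into the derivative in the $\lambda$'s and reassemble the prefactor $\lambda_F^{\tau}\binom{|F|-1}{\tau-1}$. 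Substituting the three integral terms computed in step two, and noting $\mathrm{div}(\psi\circ\tilde\phi^{(-1)}) = \mathrm{div}\,\rho$, yields \eqref{hadf1} exactly.

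I expect the main obstacle to be step two in its delicate part: namely justifying the differentiation under the integral sign of the Grushin Dirichlet form along the curve $t\mapsto \mathrm{Id}+t\rho$ near the degenerate set $\{x=0\}$, where $I_G$ is only Lipschitz and the eigenfunctions $v_l$ need not be in $W^{2,2}$. The structural hypothesis $\tilde\phi_x(0)=0$ and the estimate $\frac{1}{C}|x|\le|\phi_x|\le C|x|$ from the Remark after the definition of $\mathcal{A}_{\Omega,O}$, plus $s\in\mathbb{N}$, are precisely what is needed to keep all integrands dominated by fixed $L^1$ functions uniformly in small $t$; this is the same kind of bound already used in the proof of Lemma \ref{comphomeo}, so the argument parallels that one. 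The remaining bookkeeping in step three, while it requires care with binomial coefficients, is purely algebraic.
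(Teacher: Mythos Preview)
Your proposal is correct and follows essentially the same route as the paper: apply the abstract formula of \cite[Theorem~2.30]{LaLa04} to $(T_{G,\phi},Q_{G,\phi})$, differentiate the three ingredients $|\det D\phi|$, $(D\phi)^{-1}$, and $I_G(\phi)$ of the pulled-back forms, and pass from the $M_{F,\sigma}$ to $\Lambda_{F,\tau}$ via \eqref{LM}; the paper differentiates on $\Omega$ and then changes variables, while you equivalently differentiate the family $\mathrm{Id}+t\rho$ on $\tilde\phi(\Omega)$. Two small slips to fix when writing up: the abstract quantity is $Q_{G,\tilde\phi}[\dot T_{G,\phi}u_l,u_l]=\dot J_\phi[u_l][u_l]-\mu_F\,\dot Q_{G,\phi}[u_l,u_l]$ (you wrote the roles of the mass and Dirichlet forms swapped), and the eigenbasis in the hypothesis is $Q_G$-orthonormal, not $L^2$-orthonormal, which matters for the prefactor.
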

 \begin{proof}
 We set $u_l := v_l \circ \tilde \phi$ for all $l \in F$ and we note that $\{u_l\}_{l \in F}$ is an orthonormal basis 
 in $\left(W^{1,2}_{G,0}(\Omega), Q_{G,\tilde \phi}\right)$ for the eigenspace corresponding to the eigenvalue 
 $\lambda_F^{-1}[\tilde \phi]$ of the operator $T_{G,\tilde \phi}$.  We recall that $M_{F,\tau}$ is 
 the operator defined in \eqref{Mdef}. By Lamberti and  Lanza de Cristoforis   \cite[Theorem 2.30]{LaLa04} it follows that
 \[
  d_{|\phi = \tilde \phi}(M_{F,\tau})[\psi]=  \lambda_F^{1-\tau}[\tilde \phi]\binom{|F|-1}{\tau-1}
  \sum_{l \in F} Q_{G,\tilde \phi}\left[ d_{|\phi = \tilde \phi} T_{G,\phi}[\psi][u_l],u_l\right]
  \qquad \forall \psi \in L_{\Omega,O}.
 \]
 Thus, exploiting formula \eqref{LM}, we have that
 \begin{align}\label{hadf2}
 &d_{|\phi = \tilde \phi}(\Lambda_{F,\tau})[\psi]\\\nonumber
  &= \left\{  d_{|\phi = \tilde \phi}M_{F,|F|-\tau}[\psi]M_{F,|F|}[\tilde \phi] - M_{F,|F|-\tau}[\tilde \phi]d_{|\phi = \tilde \phi}M_{F,|F|}[\psi]\right\} \lambda^{2|F|}_{F}[\tilde \phi] \\ \nonumber
  &=\left\{\lambda_F^{1-2|F|+\tau}[\tilde \phi]\binom{|F|-1}{|F|- \tau-1}
  -  \lambda_F^{1-2|F|+\tau}[\tilde \phi]\binom{|F|}{\tau}\right\}  
  \lambda^{2|F|}_{F}[\tilde \phi]  \sum_{l \in F}Q_{G,\tilde \phi}\left[d_{|\phi = \tilde \phi} T_{G,\phi}[\psi][u_l],u_l\right]  \\\nonumber
   &=-\lambda_F^{1+\tau}[\tilde \phi]\binom{|F|-1}{\tau-1}  
   \sum_{l \in F}Q_{G,\tilde \phi}\left[ d_{|\phi = \tilde \phi} T_{G,\phi}[\psi][u_l],u_l\right]
   \qquad\qquad \forall \psi \in L_{\Omega,O}.
 \end{align}
 Thus, we have to compute the term 
 $Q_{G,\tilde \phi}[  d_{|\phi = \tilde \phi} T_{G,\phi}[\psi][u_l],u_l].$
 By standard rules of calculus in Banach spaces, by the definition \eqref{Qdefphi} of $Q_{G,\phi}$, and 
 since every $u_l$ is an eigenfunction corresponding to the eigenvalue $\lambda_F^{-1}[\tilde \phi]$, 
 we have that 
 \begin{align*}
  Q_{G,\tilde \phi}&\left[d_{|\phi = \tilde \phi} T_{G,\phi}[\psi][u_l],u_l\right] \\
  &= Q_{G,\tilde \phi}\left[d_{|\phi = \tilde \phi} (-\Delta_{G,\phi}^{(-1)} \circ J_\phi \circ i)[\psi][u_l],u_l\right]\\
  &= Q_{G,\tilde \phi}\left[ -\Delta_{G,\tilde \phi}^{(-1)} \circ \left(d_{|\phi = \tilde \phi}(J_\phi \circ i)[\psi]\right)[u_l],u_l\right]+ Q_{G,\tilde \phi}\left[\left(d_{|\phi = \tilde \phi} (-\Delta_{G,\phi}^{(-1)})[\psi]\right) \circ J_{\tilde\phi }\circ i[u_l],u_l\right] \\
   &=   \left(d_{|\phi = \tilde \phi}(J_\phi \circ i)[\psi][u_l]\right)[u_l] 
   - \Delta_{G,\tilde \phi}\left[\Delta_{G, \tilde\phi}^{(-1)}\circ\left(d_{|\phi = \tilde \phi} (\Delta_{G,\phi})[\psi]\right)\circ \Delta_{G, \tilde\phi}^{(-1)} \circ J_{\tilde\phi} \circ i[u_l]\right][u_l] 
    \\
   &=   \left(d_{|\phi = \tilde \phi}(J_\phi \circ i)[\psi][u_l]\right)[u_l] 
 + \lambda_F^{-1}[\tilde \phi]\left(d_{|\phi = \tilde \phi} (\Delta_{G,\phi})[\psi]\right)[u_l][u_l] \qquad \forall \psi \in L_{\Omega,O}
 \end{align*}
(cf. Lamberti and Lanza de Cristoforis \cite[Lemma~3.26]{LaLa04}). Hence, in order to have an explicit representation of the differential, we need to compute the terms
 \[
 \left(d_{|\phi = \tilde \phi}(J_\phi \circ i)[\psi][u_l]\right)[u_l] \qquad \mbox{and} \qquad \left(d_{|\phi = \tilde \phi} (\Delta_{G,\phi})[\psi]\right)[u_l][u_l].
 \]
Standard rules of calculus in Banach spaces yield
 \begin{equation}\label{diffDetD}
 \left[\left(d_{|\phi = \tilde \phi} (\det D\phi )[\psi]\right)\circ {\tilde\phi}^{(-1)}\right]
 \det D {\tilde\phi}^{(-1)} = \mathrm{div} \left(\psi \circ  {\tilde\phi}^{(-1)}\right)  \qquad\qquad \forall \psi \in L_{\Omega,O}.
  \end{equation}
We note that the map from $\{f \in L^\infty(\Omega): \mathrm{essinf}_{\Omega} |f|>0\}$ to 
 $ L^\infty(\Omega)$ which takes $f$ to
 $|f|$ is differentiable and its differential at $f$ is the map from $ L^\infty(\Omega)$
to itself which maps $h$ to $\mathrm{sgn}(f)h$. By the above 
equality \eqref{diffDetD} and by a change of variable we obtain
\begin{align*}
 \left(d_{|\phi = \tilde \phi}(J_\phi \circ i)[\psi][u_l]\right)[u_l] &=
 \int_{\Omega}u_l^2d_{|\phi = \tilde \phi} (|\det D\phi|)[\psi]\,dz\\
 &= \int_{\tilde \phi(\Omega)}v_l^2d_{|\phi = \tilde \phi} ((|\det D\phi|)[\psi])\circ {\tilde\phi}^{(-1)} |\det D {\tilde\phi}^{(-1)}|\,dz\\
  &= \int_{\tilde \phi(\Omega)}v_l^2 \mathrm{div} \left(\psi \circ  {\tilde\phi}^{(-1)}\right) \,dz 
  \qquad\qquad \forall \psi \in L_{\Omega,O}.
\end{align*}
 Next, we turn to consider the shape differential of the term $((\Delta_{G,\phi})[\psi])[u_l][u_l]$.  By standard rules of calculus we have
\begin{align*}
d_{|\phi = \tilde \phi} (D\phi)^{-1}[\psi] = -(D\tilde \phi)^{-1}D\psi (D\tilde \phi)^{-1}
\qquad  \forall \psi \in L_{\Omega,O},
\end{align*} 
and 
\begin{align*}
d_{|\phi = \tilde \phi}I_G(\phi)[\psi] =  
\begin{pmatrix} 
 \mathbf{0}_{h \times h} & \mathbf{0}_{h \times k} \\
  \mathbf{0}_{k \times h}  & s|\tilde \phi_x|^{s-2} \tilde\phi_x \cdot \psi_x I_{k \times k}  
 \end{pmatrix}
    \qquad  \forall \psi \in L_{\Omega,O}.
\end{align*}
  Hence, 
 \begin{align*}
 \big(d_{|\phi = \tilde \phi}& (\Delta_{G,\phi})[\psi]\big)[u_l][u_l] \\
 =&\int_{\Omega}\nabla u_l   (D\tilde \phi)^{-1}D\psi (D\tilde \phi)^{-1} I_G(\tilde \phi) (\nabla u_l(D \tilde \phi)^{-1} I_G(\tilde \phi))^t|\det D\tilde \phi| \,dz \\
 &+\int_{\Omega}\nabla u_l (D\tilde \phi)^{-1} I_G(\tilde \phi) (\nabla u_l (D\tilde \phi)^{-1}D\psi (D \tilde \phi)^{-1} I_G(\tilde \phi))^t|\det D\tilde \phi| \,dz \\
  &-\int_{\Omega}\nabla u_l  (D\tilde \phi)^{-1} I_G(\tilde \phi) (\nabla u_l(D \tilde \phi)^{-1} I_G(\tilde \phi))^t(d_{|\phi = \tilde \phi}|\det D\phi|)[\psi] \,dz \\
  &-\int_{\Omega}\nabla u_l  (D\tilde \phi)^{-1} (d_{|\phi = \tilde \phi}I_G(\phi)[\psi]) (\nabla u_l(D \tilde \phi)^{-1} I_G(\tilde \phi))^t|\det D\tilde \phi| \,dz \\
    &-\int_{\Omega}\nabla u_l  (D\tilde \phi)^{-1} I_G(\phi) (\nabla u_l(D \tilde \phi)^{-1}  (d_{|\phi = \tilde \phi}I_G(\phi)[\psi])(\tilde \phi))^t|\det D\tilde \phi| \,dz \\
  =& \int_{\tilde \phi(\Omega)}(\nabla v_l)  (D(\psi \circ \tilde \phi^{(-1)})I_G)(\nabla_Gv_l)^t  \,dz \\
  &+\int_{\tilde \phi(\Omega)}(\nabla_G v_l)  (D(\psi \circ \tilde \phi^{(-1)})I_G)^t(\nabla v_l)^t  \,dz\\
  &-\int_{\tilde \phi(\Omega)}|\nabla_Gv_l|^2     
     \mathrm{div} \left(\psi \circ  {\tilde\phi}^{(-1)}\right)\,dz\\
  &-2s\int_{\tilde \phi(\Omega)}|x|^{2s-2}|\nabla_y v_l|^2 x \cdot (\psi \circ \tilde \phi^{(-1)})_x \,dz\\
  =&\int_{\tilde \phi(\Omega)}(\nabla v_l)  \Big(D(\psi \circ \tilde \phi^{(-1)})I_G^2 + I_G^2D(\psi \circ \tilde \phi^{(-1)})^t\Big)(\nabla v_l)^t  \,dz \\
  &-\int_{\tilde \phi(\Omega)}|\nabla_Gv_l|^2     
     \mathrm{div} \left(\psi \circ  {\tilde\phi}^{(-1)}\right)\,dz\\
  &-\int_{\tilde \phi(\Omega)}2s|x|^{2s-2}|\nabla_y v_l|^2 x \cdot (\psi \circ \tilde \phi^{(-1)})_x \,dz
  \qquad \forall \psi \in  L_{\Omega,O}.
 \end{align*}
 Accordingly, we   have proved that
 \begin{align*}
 Q_{G,\tilde \phi}\Big[  d_{|\phi = \tilde \phi} T_{G,\phi}&[\psi][u_l],u_l\Big]
 \\= &
 \int_{\tilde \phi(\Omega)}v_l^2 \mathrm{div} \left(\psi \circ  {\tilde\phi}^{(-1)}\right) \,dz \\ \nonumber
 &+ \lambda_F^{-1}[\tilde \phi] \int_{\tilde \phi(\Omega)}(\nabla v_l)  \Big(D(\psi \circ \tilde \phi^{(-1)})I_G^2 + I_G^2D(\psi \circ \tilde \phi^{(-1)})^t\Big)(\nabla v_l)^t  \,dz \\ \nonumber
 & - \lambda_F^{-1}[\tilde \phi] \int_{\tilde \phi(\Omega)}|\nabla_Gv_l|^2     
     \mathrm{div} \left(\psi \circ  {\tilde\phi}^{(-1)}\right)\,dz \\ \nonumber
  & - \lambda_F^{-1}[\tilde \phi] \int_{\tilde \phi(\Omega)}2s|x|^{2s-2}|\nabla_y v_l|^2 x \cdot (\psi \circ \tilde \phi^{(-1)})_x \,dz
      \qquad \forall \psi \in L_{\Omega,O}.
 \end{align*}
 Putting together all the above equalities one verifies that formula \eqref{hadf1} holds.
 \end{proof}
 
Now, our aim is to rewrite formula \eqref{hadf1} in a simpler form and obtain a Grushin analog of the classical Hadamard formula. To achieve this goal, we 
 prove an intermediate technical lemma where we provide a suitable representation formula for  $Q_{G,\tilde \phi}\Big[  d_{|\phi = \tilde \phi} T_{G,\phi}[\psi][u_1],u_2\Big]$, where $u_1$, $u_2$ are two eigenfunctions associated 
  with the same eigenvalue.   The following lemma is the analog in the Grushin setting of Lanza de Cristoforis and Lamberti 
 \cite[Lemma 3.26]{LaLa04} for the standard Laplacian. We note that, although the idea behind the proof is the same, 
 the Grushin case requires a careful and not straightforward analysis of several terms which do not appear in the standard case. For this reason we include a detailed proof. 
 \begin{lemma}\label{Qformula}
Let  $\Omega$ and  $O$ be as in \eqref{Omega_def}.
Let $\tilde \phi \in \mathcal{A}_{\Omega,O}$. Suppose that $\tilde \phi(\Omega)$ is of class $C^1$.
 Let $v_1, v_2 \in W^{1,2}_{G,0}(\tilde \phi(\Omega))$  be two eigenfunctions corresponding to an 
 eigenvalue $\lambda[\tilde \phi]$ of \eqref{wsp}.
Suppose that $v_1,v_2 \in W^{1,2}_0(\tilde \phi(\Omega)) \cap W^{2,2}(\tilde\phi(\Omega))$. Let $u_1:=v_1 \circ \tilde \phi$, $u_2:=v_2 \circ \tilde \phi$.
 Then
 \begin{align}\label{Qformula1}
 Q_{G,\tilde \phi}\Big[  d_{|\phi = \tilde \phi} T_{G,\phi}[\psi][u_1],u_2\Big]  = \lambda[\tilde \phi]^{-1}\int_{\partial\tilde \phi(\Omega)}
  (\psi \circ  {\tilde\phi}^{(-1)} \mathbf{n}^t)  \frac{\partial v_1}{\partial \mathbf{n}}\frac{\partial v_2}{\partial \mathbf{n}}|\mathbf{n}_G|^2\,d\sigma \qquad \forall \psi \in L_{\Omega,O},
 \end{align}
  where $\mathbf{n}$ denotes the outward unit normal field to $\partial \tilde \phi (\Omega)$ and 
  \begin{equation}\label{grushinnormal}
 \mathbf{n}_G :=  \mathbf{n}\, I_G=(\mathbf{n}_x,|x|^s\mathbf{n}_y) \qquad \mbox{ on } \partial \tilde \phi (\Omega).
 \end{equation}
 \end{lemma}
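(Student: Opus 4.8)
The plan is to start from the expression for $Q_{G,\tilde\phi}[d_{|\phi=\tilde\phi}T_{G,\phi}[\psi][u_1],u_2]$ that comes out of the differentiation carried out in the proof of Theorem \ref{hadf}, but now with two (possibly different) eigenfunctions $u_1,u_2$ in place of the repeated argument $u_l$. By the same standard rules of calculus in Banach spaces — differentiating the composition $-\Delta_{G,\phi}^{(-1)}\circ J_\phi\circ i$ and using that $u_1,u_2$ are eigenfunctions of $T_{G,\tilde\phi}$ with eigenvalue $\lambda[\tilde\phi]^{-1}$ — one obtains, after pushing everything forward to $\tilde\phi(\Omega)$ via $C_{\phi^{(-1)}}$ and writing $\Psi:=\psi\circ\tilde\phi^{(-1)}$,
\begin{align*}
Q_{G,\tilde\phi}\Big[d_{|\phi=\tilde\phi}T_{G,\phi}[\psi][u_1],u_2\Big]
=&\int_{\tilde\phi(\Omega)}v_1v_2\,\mathrm{div}\,\Psi\,dz
+\lambda[\tilde\phi]^{-1}\int_{\tilde\phi(\Omega)}(\nabla v_1)\Big(D\Psi\, I_G^2+I_G^2 (D\Psi)^t\Big)(\nabla v_2)^t\,dz\\
&-\lambda[\tilde\phi]^{-1}\int_{\tilde\phi(\Omega)}(\nabla_G v_1\cdot\nabla_G v_2)\,\mathrm{div}\,\Psi\,dz
-\lambda[\tilde\phi]^{-1}\int_{\tilde\phi(\Omega)}2s|x|^{2s-2}(\nabla_y v_1\cdot\nabla_y v_2)\,x\cdot\Psi_x\,dz,
\end{align*}
which is the polarized analog of the identity established at the end of the proof of Theorem \ref{hadf}. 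The task is then purely a matter of integration by parts on the fixed (now $C^1$) domain $\tilde\phi(\Omega)$, using the extra regularity $v_1,v_2\in W_0^{1,2}\cap W^{2,2}$ to make all manipulations legitimate and to produce boundary terms.

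Next I would carry out the integrations by parts term by term, with the goal of collecting all interior contributions into something proportional to the weak equation and hence vanishing. Concretely: in the first term write $v_1v_2\,\mathrm{div}\,\Psi=\mathrm{div}(v_1v_2\Psi)-\Psi\cdot\nabla(v_1v_2)$; since $v_1,v_2$ vanish on $\partial\tilde\phi(\Omega)$ the divergence term integrates to zero, leaving $-\int \Psi\cdot(v_1\nabla v_2+v_2\nabla v_1)$. In the third term, similarly, $(\nabla_G v_1\cdot\nabla_G v_2)\,\mathrm{div}\,\Psi=\mathrm{div}\big((\nabla_G v_1\cdot\nabla_G v_2)\Psi\big)-\Psi\cdot\nabla(\nabla_G v_1\cdot\nabla_G v_2)$, and here the divergence term does \emph{not} vanish on the boundary — it yields $\int_{\partial\tilde\phi(\Omega)}(\Psi\cdot\mathbf n)(\nabla_G v_1\cdot\nabla_G v_2)\,d\sigma$, which, using that $v_1=v_2=0$ on the boundary forces the tangential part of $\nabla v_i$ to vanish so that $\nabla v_i=(\partial v_i/\partial\mathbf n)\mathbf n$ there, becomes $\int_{\partial\tilde\phi(\Omega)}(\Psi\cdot\mathbf n)\frac{\partial v_1}{\partial\mathbf n}\frac{\partial v_2}{\partial\mathbf n}|\mathbf n_G|^2\,d\sigma$ after recognizing $\mathbf n_G=\mathbf n\,I_G$. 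The remaining interior pieces — the gradients of $v_1v_2$ and of $\nabla_G v_1\cdot\nabla_G v_2$, the symmetrized $D\Psi$ term, and the weighted term with $2s|x|^{2s-2}$ — have to be regrouped. Expanding $\nabla(\nabla_G v_1\cdot\nabla_G v_2)$ produces second-derivative terms together with exactly the $2s|x|^{2s-2}(\nabla_y v_1\cdot\nabla_y v_2)x$ contribution, which cancels the last integral; and integrating by parts once more in the second-derivative terms and in the $D\Psi\,I_G^2$ term (moving a derivative off $\Psi$ back onto the $v_i$'s) one recovers, after using $-\Delta_G v_i=\lambda[\tilde\phi]v_i$ in the weak sense tested against $v_j$ times components of $\Psi$, precisely $\lambda[\tilde\phi]\int\Psi\cdot(v_1\nabla v_2+v_2\nabla v_1)$ times $\lambda[\tilde\phi]^{-1}$, i.e. $\int\Psi\cdot(v_1\nabla v_2+v_2\nabla v_1)$, which cancels against the leftover of the first term. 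After all cancellations only the boundary integral survives, multiplied by $\lambda[\tilde\phi]^{-1}$, giving \eqref{Qformula1}.

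The main obstacle is the bookkeeping in the degenerate weight: the matrix $I_G(z)$ has $|x|^s$ entries that are only Lipschitz (indeed not differentiable on $\{x=0\}$), so every integration by parts that touches $I_G^2=I_G\,I_G$ generates the anomalous term with $\nabla_x(|x|^{2s})=2s|x|^{2s-2}x$, and one must keep careful track of signs and of the splitting $\nabla v\,I_G=(\nabla_x v,|x|^s\nabla_y v)=\nabla_G v$ versus $\nabla v\,I_G^2=(\nabla_x v,|x|^{2s}\nabla_y v)$. One should justify the boundary integrals and the integrations by parts on a neighborhood away from $\{x=0\}$ and then pass to the limit, or invoke the hypothesis $v_i\in W_0^{1,2}\cap W^{2,2}$ together with the fact that $|x|^{2s}$ multiplied by bounded second derivatives is still integrable; the decay of the weight near $\{x=0\}$ is what makes the would-be extra boundary contributions along $\{x=0\}\cap\partial\tilde\phi(\Omega)$ harmless, since $|\mathbf n_G|^2=|\mathbf n_x|^2+|x|^{2s}|\mathbf n_y|^2$ and the whole boundary integrand carries the factor $|\mathbf n_G|^2$. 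Once this weighted calculus is handled with care, the algebraic cancellation of the interior terms is the same mechanism as in \cite[Lemma 3.26]{LaLa04}.
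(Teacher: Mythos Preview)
Your overall strategy is the same as the paper's --- start from the polarized version of the formula established in Theorem~\ref{hadf}, then integrate by parts on $\tilde\phi(\Omega)$ and use the eigenvalue equation to cancel the interior terms --- so the approach is correct in spirit. However, there is a genuine bookkeeping error in how you account for the boundary contributions, and as written your argument gives the wrong sign.

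You identify only \emph{one} boundary integral, namely the one obtained from rewriting the third term $-\lambda[\tilde\phi]^{-1}\int(\nabla_G v_1\cdot\nabla_G v_2)\,\mathrm{div}\,\Psi$ via the divergence theorem. But that term carries a minus sign, so the boundary contribution you obtain is
\[
-\lambda[\tilde\phi]^{-1}\int_{\partial\tilde\phi(\Omega)}(\Psi\cdot\mathbf n)\,\frac{\partial v_1}{\partial\mathbf n}\frac{\partial v_2}{\partial\mathbf n}\,|\mathbf n_G|^2\,d\sigma,
\]
which is the \emph{negative} of what you need to prove. The missing $+2\lambda[\tilde\phi]^{-1}\int_{\partial}(\cdots)$ comes precisely from the step you describe as ``integrating by parts once more \ldots\ in the $D\Psi\,I_G^2$ term (moving a derivative off $\Psi$ back onto the $v_i$'s)''. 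When you move a derivative from $\Psi$ onto the eigenfunctions in the second term $\lambda[\tilde\phi]^{-1}\int(\nabla v_1)(D\Psi\,I_G^2+I_G^2(D\Psi)^t)(\nabla v_2)^t$, the resulting boundary terms do \emph{not} vanish: they involve $\nabla v_i$ and $\Psi$, not $v_i$, on $\partial\tilde\phi(\Omega)$. In the paper's proof these are exactly the terms collected as $I_3$ (and parts of $I_2$), and after using $\nabla v_i=(\partial v_i/\partial\mathbf n)\mathbf n$ on the boundary they produce
\[
2\lambda[\tilde\phi]^{-1}\int_{\partial\tilde\phi(\Omega)}(\Psi\cdot\mathbf n)\,\frac{\partial v_1}{\partial\mathbf n}\frac{\partial v_2}{\partial\mathbf n}\,|\mathbf n_G|^2\,d\sigma.
\]
Combining this with your $-\lambda[\tilde\phi]^{-1}\int_{\partial}(\cdots)$ gives the correct $+\lambda[\tilde\phi]^{-1}\int_{\partial}(\cdots)$. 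So your sentence ``After all cancellations only the boundary integral survives'' is literally true, but the surviving boundary integral is the sum of \emph{three} pieces (one from the third term, two from the second), not the single one you singled out. You need to track those extra boundary terms explicitly; the interior cancellations you describe (the $2s|x|^{2s-2}$ term and the matching against $\int\Psi\cdot\nabla(v_1v_2)$ via the eigenvalue equation) are otherwise correct.
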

 \begin{proof}
 We fix $\psi \in L_{\Omega,O}$ and for the sake of simplicity we set  $\omega := \psi \circ  {\tilde\phi}^{(-1)}$.
 A minor modification of the proof of Theorem \ref{hadf} shows that 
  \begin{align}\label{Qformula1'}
 Q_{G,\tilde \phi}\Big[d_{|\phi = \tilde \phi} T_{G,\phi}&[\psi][u_1],u_2\Big] 
 \\ \nonumber
 = &\int_{\tilde \phi(\Omega)}v_1v_2 \mathrm{div} \left(\omega\right) \,dz \\ \nonumber
 &+ \lambda^{-1}[\tilde \phi] \int_{\tilde \phi(\Omega)}(\nabla v_1)  \Big((D\omega) I_G^2 + I_G^2(D\omega)^t\Big)(\nabla v_2)^t  \,dz \\ \nonumber
 & - \lambda^{-1}[\tilde \phi] \int_{\tilde \phi(\Omega)}(\nabla_Gv_1)(\nabla_Gv_2)^t    
     \mathrm{div} \left(\omega\right)\,dz \\ \nonumber
  & - \lambda^{-1}[\tilde \phi] \int_{\tilde \phi(\Omega)}2s|x|^{2s-2}(\nabla_y v_1)(\nabla_y v_2)^t x \cdot \omega_x \,dz
      \qquad \forall \psi \in L_{\Omega,O}.
 \end{align}
  We start by considering the second term in the right hand side of formula  \eqref{Qformula1'}. 
 A simple computation shows that
 \begin{align} \label{hadform1}
\int_{\tilde \phi(\Omega)}\nabla v_1
  \Big((D\omega)& I_G^2 + I_G^2 (D\omega)^t \Big)
  \left( \nabla v_2 \right)^t \,dz \\ \nonumber
  =&\int_{\tilde \phi(\Omega)}\nabla_x v_1
  \Big(D_x\omega_x  + (D_x\omega_x)^t \Big)
  \left( \nabla_x v_2 \right)^t \,dz \\ \nonumber
    &+\int_{\tilde \phi(\Omega)}|x|^{2s}\nabla_y v_1
  \Big(D_y\omega_y + (D_y\omega_y)^t \Big)
  \left( \nabla_y v_2 \right)^t \,dz \\\nonumber
  &+\int_{\tilde \phi(\Omega)} \nabla_y v_1
  \Big(D_x\omega_y + |x|^{2s}(D_y\omega_x)^t \Big)
  \left( \nabla_x v_2 \right)^t \,dz\\\nonumber
  &+\int_{\tilde \phi(\Omega)} \nabla_y v_2
  \Big(D_x\omega_y + |x|^{2s}(D_y\omega_x)^t \Big)
  \left( \nabla_x v_1 \right)^t \,dz.
 \end{align}
 We consider the second term in the right hand side of equation \eqref{hadform1}. By the Divergence Theorem we have that
\begin{align*}
\int_{\tilde \phi(\Omega)}|x|^{2s}\nabla_y v_1&
  \Big(D_y\omega_y + (D_y\omega_y)^t \Big)
  \left( \nabla_y v_2 \right)^t \,dz\\
  =&\sum_{i,j=1}^{k}\int_{\tilde \phi(\Omega)}\left(|x|^{2s} 
  \frac{\partial \omega_{y_i}}{\partial y_j}  \frac{\partial v_1}{\partial y_i}  \frac{\partial v_2}{\partial y_j}
  +|x|^{2s} 
  \frac{\partial \omega_{y_j}}{\partial y_i}  \frac{\partial v_1}{\partial y_i}  \frac{\partial v_2}{\partial y_j}\right)\,dz  \\
  =&-\sum_{i,j=1}^{k}\int_{\tilde \phi(\Omega)}\left(|x|^{2s} 
 \omega_{y_i} \frac{\partial^2 v_1}{\partial y_j\partial y_i}  \frac{\partial v_2}{\partial y_j}
 +|x|^{2s} 
 \omega_{y_i} \frac{\partial v_1}{\partial y_i}  \frac{\partial^2 v_2}{\partial y_j^2}
\right)\,dz\\
&-\sum_{i,j=1}^{k}\int_{\tilde \phi(\Omega)}\left(|x|^{2s} 
 \omega_{y_j} \frac{\partial^2 v_1}{\partial y_i^2}  \frac{\partial v_2}{\partial y_j}
 +|x|^{2s} 
 \omega_{y_j} \frac{\partial v_1}{\partial y_i}  \frac{\partial^2 v_2}{\partial y_i\partial y_j}
\right)\,dz\\
&+\sum_{i,j=1}^{k}\int_{\partial\tilde \phi(\Omega)}\left(|x|^{2s} 
  \omega_{y_i} \mathbf{n}_{y_j} \frac{\partial v_1}{\partial y_i}  \frac{\partial v_2}{\partial y_j}
  +|x|^{2s} 
   \omega_{y_j} \mathbf{n}_{y_i} \frac{\partial v_1}{\partial y_i}  \frac{\partial v_2}{\partial y_j}\right)\,d\sigma_z.
\end{align*}
Clearly 
\begin{align*}
\sum_{i,j=1}^{k}\int_{\tilde \phi(\Omega)}|x|^{2s} 
 \omega_{y_i} \frac{\partial v_1}{\partial y_i}  \frac{\partial^2 v_2}{\partial y_j^2}
\,dz
= \int_{\tilde \phi(\Omega)}|x|^{2s}\Delta_yv_2 (\nabla_y v_1)\omega_y^t\,dz,
\end{align*}
and 
\begin{align*}
\sum_{i,j=1}^{k}\int_{\tilde \phi(\Omega)}|x|^{2s} 
 \omega_{y_j} \frac{\partial^2 v_1}{\partial y_i^2}  \frac{\partial v_2}{\partial y_j}
\,dz= \int_{\tilde \phi(\Omega)}|x|^{2s}\Delta_yv_1 (\nabla_y v_2)\omega_y^t\,dz.
\end{align*}
Moreover, 
 \begin{align*}
\sum_{i,j=1}^{k}\int_{\tilde \phi(\Omega)}|x|^{2s}& \omega_{y_i} \frac{\partial^2 v_1}{\partial y_j\partial y_i}  \frac{\partial v_2}{\partial y_j}\,dz +
\sum_{i,j=1}^{k}\int_{\tilde \phi(\Omega)}|x|^{2s} 
 \omega_{y_j} \frac{\partial v_1}{\partial y_i}  \frac{\partial^2 v_2}{\partial y_i\partial y_j}\,dz\\
 =&\sum_{i,j=1}^{k}\int_{\tilde \phi(\Omega)}|x|^{2s} \omega_{y_i} \frac{\partial^2 v_1}{\partial y_j\partial y_i}  \frac{\partial v_2}{\partial y_j}\,dz +
\sum_{i,j=1}^{k}\int_{\tilde \phi(\Omega)}|x|^{2s} 
 \omega_{y_i} \frac{\partial v_1}{\partial y_j}  \frac{\partial^2 v_2}{\partial y_j\partial y_i}\,dz\\
 =&\sum_{i,j=1}^{k}\int_{\tilde \phi(\Omega)}|x|^{2s} \omega_{y_i}
 \frac{\partial}{\partial y_i}\left( \frac{\partial v_1}{\partial y_j}  \frac{\partial v_2}{\partial y_j}\right)\,dz\\
 =&-\sum_{i,j=1}^{k}\int_{\tilde \phi(\Omega)}|x|^{2s} 
 \frac{\partial\omega_{y_i}}{\partial y_i} \frac{\partial v_1}{\partial y_j}  \frac{\partial v_2}{\partial y_j}\,dz
 +\sum_{i,j=1}^{k}\int_{\partial\tilde \phi(\Omega)}|x|^{2s} 
  \omega_{y_i}\mathbf{n}_{y_i}
 \frac{\partial v_1}{\partial y_j}  \frac{\partial v_2}{\partial y_j}\,d\sigma_z\\
  =&-\int_{\tilde \phi(\Omega)}|x|^{2s} (\nabla_yv_1)(\nabla_yv_2)^t\mathrm{div}_y\omega_y\,dz
  +\int_{\partial\tilde \phi(\Omega)}|x|^{2s} (\omega_y \mathbf{n}_y^t)((\nabla_yv_1)(\nabla_yv_2)^t)\,d\sigma_z,
\end{align*}
and 
\begin{align*}
\sum_{i,j=1}^{k}\int_{\partial\tilde \phi(\Omega)}&\left(|x|^{2s} 
  \omega_{y_i} \mathbf{n}_{y_j} \frac{\partial v_1}{\partial y_i}  \frac{\partial v_2}{\partial y_j}
  +|x|^{2s} 
   \omega_{y_j} \mathbf{n}_{y_i} \frac{\partial v_1}{\partial y_i}  \frac{\partial v_2}{\partial y_j}\right)\,d\sigma_z\\
   =&\int_{\partial\tilde \phi(\Omega)}\left(|x|^{2s} 
  (\omega_y\nabla_yv_1^t)(\mathbf{n}_y\nabla_yv_2^t)+|x|^{2s} 
  (\omega_y\nabla_yv_2^t)(\mathbf{n}_y\nabla_yv_1^t)\right) \,d\sigma_z.
\end{align*}
Accordingly, the second term in the right hand side of equation \eqref{hadform1} equals
\begin{align*}
\int_{\tilde \phi(\Omega)}|x|^{2s}\nabla_y v_1
  \Big(&D_y\omega_y + (D_y\omega_y)^t \Big)
  \left( \nabla_y v_2 \right)^t \,dz\\
  =&- \int_{\tilde \phi(\Omega)}|x|^{2s}\Delta_yv_2 (\nabla_y v_1)\omega_y^t\,dz\\
  &- \int_{\tilde \phi(\Omega)}|x|^{2s}\Delta_yv_1 (\nabla_y v_2)\omega_y^t\,dz\\
  &+\int_{\tilde \phi(\Omega)}|x|^{2s} (\nabla_yv_1)(\nabla_yv_2)^t\mathrm{div}_y\omega_y\,dz\\
  &-\int_{\partial\tilde \phi(\Omega)}|x|^{2s} (\omega_y \mathbf{n}_y^t)((\nabla_yv_1)(\nabla_yv_2)^t)\,d\sigma_z\\
  &+\int_{\partial\tilde \phi(\Omega)}\left(|x|^{2s} 
  (\omega_y\nabla_yv_1^t)(\mathbf{n}_y\nabla_yv_2^t)+|x|^{2s} 
  (\omega_y\nabla_yv_2^t)(\mathbf{n}_y\nabla_yv_1^t)\right) \,d\sigma_z.
\end{align*}
Similarly,  the first term in the right hand side of equation \eqref{hadform1} equals
\begin{align*}
\int_{\tilde \phi(\Omega)}\nabla_x v_1 
  \Big(D_x\omega_x& + (D_x\omega_x)^t \Big)
  \left( \nabla_x v_2 \right)^t \,dz\\
  =&- \int_{\tilde \phi(\Omega)}\Delta_xv_2(\nabla_x v_1)\omega_x^t\,dz\\
  &- \int_{\tilde \phi(\Omega)}\Delta_xv_1 (\nabla_x v_2)\omega_x^t\,dz\\
  &+\int_{\tilde \phi(\Omega)} (\nabla_xv_1)(\nabla_xv_2)^t\mathrm{div}_x\omega_x\,dz\\
  &-\int_{\partial\tilde \phi(\Omega)} (\omega_x \mathbf{n}_x^t)((\nabla_xv_1)(\nabla_xv_2)^t)\,d\sigma_z\\
  &+\int_{\partial\tilde \phi(\Omega)}\left(
  (\omega_x\nabla_xv_1^t)(\mathbf{n}_x\nabla_xv_2^t)+
  (\omega_x\nabla_xv_2^t)(\mathbf{n}_x\nabla_xv_1^t)\right) \,d\sigma_z.
\end{align*}
Since $v_1,v_2$ are eigenfunctions we  have
\begin{align*}
-\int_{\tilde \phi(\Omega)}\Delta_xv_1 (\nabla_x v_2)\omega_x^t\,dz &  -\int_{\tilde \phi(\Omega)}|x|^{2s}\Delta_yv_1(\nabla_y v_2)\omega_y^t\,dz\\
=& -\; \int_{\tilde \phi(\Omega)}\Delta_{G}v_1 (\nabla v_2)\omega^t\,dz +
   \int_{\tilde \phi(\Omega)}\Delta_xv_1 (\nabla_y v_2)\omega_y^t\,dz\\
 &+\int_{\tilde \phi(\Omega)}|x|^{2s}\Delta_yv_1 (\nabla_x v_2)\omega_x^t\,dz\\
 =&\,\lambda[\tilde\phi] \int_{\tilde \phi(\Omega)}v_1 (\nabla v_2)\omega^t\,dz
 +  \int_{\tilde \phi(\Omega)}\Delta_xv_1 (\nabla_y v_2)\omega_y^t\,dz\\
 &+\int_{\tilde \phi(\Omega)}|x|^{2s}\Delta_yv_1 (\nabla_x v_2)\omega_x^t\,dz,
\end{align*}
and
\begin{align*}
-\int_{\tilde \phi(\Omega)}\Delta_xv_2 (\nabla_x v_1)\omega_x^t\,dz &  -\int_{\tilde \phi(\Omega)}|x|^{2s}\Delta_yv_2(\nabla_y v_1)\omega_y^t\,dz\\
 =&\,\lambda[\tilde\phi] \int_{\tilde \phi(\Omega)}v_2 (\nabla v_1)\omega^t\,dz
 +  \int_{\tilde \phi(\Omega)}\Delta_xv_2 (\nabla_y v_1)\omega_y^t\,dz\\
 &+\int_{\tilde \phi(\Omega)}|x|^{2s}\Delta_yv_2 (\nabla_x v_1)\omega_x^t\,dz,
\end{align*}
and accordingly, 
\begin{align*}
-\int_{\tilde \phi(\Omega)}\Delta_xv_1 (\nabla_x v_2)\omega_x^t\,dz &  -\int_{\tilde \phi(\Omega)}|x|^{2s}\Delta_yv_1(\nabla_y v_2)\omega_y^t\,dz\\
-\int_{\tilde \phi(\Omega)}\Delta_xv_2 (\nabla_x v_1)\omega_x^t\,dz &  -\int_{\tilde \phi(\Omega)}|x|^{2s}\Delta_yv_2(\nabla_y v_1)\omega_y^t\,dz\\
 =&\,\lambda[\tilde\phi] \int_{\tilde \phi(\Omega)}\nabla (v_1 v_2)\omega^t\,dz
  +  \int_{\tilde \phi(\Omega)}\Delta_xv_1 (\nabla_y v_2)\omega_y^t\,dz\\
 &+\int_{\tilde \phi(\Omega)}|x|^{2s}\Delta_yv_1 (\nabla_x v_2)\omega_x^t\,dz
 +  \int_{\tilde \phi(\Omega)}\Delta_xv_2 (\nabla_y v_1)\omega_y^t\,dz\\
 &+\int_{\tilde \phi(\Omega)}|x|^{2s}\Delta_yv_2 (\nabla_x v_1)\omega_x^t\,dz\\
 =&\,-\lambda[\tilde\phi] \int_{\tilde \phi(\Omega)} v_1 v_2\mathrm{div}(\omega)\,dz
  +  \int_{\tilde \phi(\Omega)}\Delta_xv_1 (\nabla_y v_2)\omega_y^t\,dz\\
 &+\int_{\tilde \phi(\Omega)}|x|^{2s}\Delta_yv_1 (\nabla_x v_2)\omega_x^t\,dz
 +  \int_{\tilde \phi(\Omega)}\Delta_xv_2 (\nabla_y v_1)\omega_y^t\,dz\\
 &+\int_{\tilde \phi(\Omega)}|x|^{2s}\Delta_yv_2 (\nabla_x v_1)\omega_x^t\,dz.
\end{align*}
Thus,
 \begin{align}\label{hadform2}
 Q_{G,\tilde \phi}\left[  d_{|\phi = \tilde \phi} T_{G,\phi}[\psi][u_1],u_2\right] = &\,
\lambda^{-1}[\tilde \phi] \int_{\tilde \phi(\Omega)}\Delta_xv_1 (\nabla_y v_2)\omega_y^t\,dz\\ \nonumber
&+
\lambda^{-1}[\tilde \phi] \int_{\tilde \phi(\Omega)}\Delta_xv_2 (\nabla_y v_1)\omega_y^t\,dz\\ \nonumber
  &+ \lambda^{-1}[\tilde \phi]\int_{\tilde \phi(\Omega)} (\nabla_xv_1)(\nabla_xv_2)^t\mathrm{div}_x\omega_x\,dz\\\nonumber
  &- \lambda^{-1}[\tilde \phi]\int_{\partial\tilde \phi(\Omega)} (\omega_x \mathbf{n}_x^t)(\nabla_xv_1)(\nabla_xv_2)^t\,d\sigma_z\\\nonumber
  &+ \lambda^{-1}[\tilde \phi]\int_{\partial\tilde \phi(\Omega)}\left(
  (\omega_x\nabla_xv_1^t)(\mathbf{n}_x\nabla_xv_2^t)+
  (\omega_x\nabla_xv_2^t)(\mathbf{n}_x\nabla_xv_1^t)\right) \,d\sigma_z\\\nonumber
    &+ \lambda^{-1}[\tilde \phi] \int_{\tilde \phi(\Omega)}|x|^{2s}\Delta_yv_1 (\nabla_x v_2)\omega_x^t\,dz\\\nonumber
      &+ \lambda^{-1}[\tilde \phi] \int_{\tilde \phi(\Omega)}|x|^{2s}\Delta_yv_2 (\nabla_x v_1)\omega_x^t\,dz\\\nonumber
  &+ \lambda^{-1}[\tilde \phi]\int_{\tilde \phi(\Omega)}|x|^{2s} (\nabla_yv_1)(\nabla_yv_2)^t\mathrm{div}_y\omega_y\,dz\\\nonumber
  &- \lambda^{-1}[\tilde \phi]\int_{\partial\tilde \phi(\Omega)}|x|^{2s} (\omega_y \mathbf{n}_y^t)(\nabla_yv_1)(\nabla_y v_2)^t\,d\sigma_z\\\nonumber
  &+ \lambda^{-1}[\tilde \phi]\int_{\partial\tilde \phi(\Omega)}\left(|x|^{2s} 
  (\omega_y\nabla_yv_1^t)(\mathbf{n}_y\nabla_yv_2^t)+|x|^{2s} 
  (\omega_y\nabla_yv_2^t)(\mathbf{n}_y\nabla_yv_1^t)\right) \,d\sigma_z\\\nonumber
   &+\lambda^{-1}[\tilde\phi]\int_{\tilde \phi(\Omega)} \nabla_y v_1
  \Big(D_x\omega_y + |x|^{2s}(D_y\omega_x)^t \Big)
  \left( \nabla_x v_2 \right)^t \,dz\\\nonumber
  &+\lambda^{-1}[\tilde \phi]\int_{\tilde \phi(\Omega)} \nabla_y v_2
  \Big(D_x\omega_y + |x|^{2s}(D_y\omega_x)^t \Big)
  \left( \nabla_x v_1 \right)^t \,dz\\\nonumber
 & - \lambda^{-1}[\tilde \phi] \int_{\tilde \phi(\Omega)}(\nabla_Gv_1)(\nabla_Gv_2)^t   
     \mathrm{div}\, \omega\,dz \\\nonumber
  & - \lambda^{-1}[\tilde \phi] \int_{\tilde \phi(\Omega)}2s|x|^{2s-2}(\nabla_y v_1)(\nabla_y v_2)^t x \cdot\omega_x \,dz\\\nonumber
  =&\,
\lambda^{-1}[\tilde \phi] \int_{\tilde \phi(\Omega)}\Delta_xv_1 (\nabla_y v_2)\omega_y^t\,dz\\ \nonumber
&+
\lambda^{-1}[\tilde \phi] \int_{\tilde \phi(\Omega)}\Delta_xv_2 (\nabla_y v_1)\omega_y^t\,dz\\ \nonumber
  &- \lambda^{-1}[\tilde \phi]\int_{\tilde \phi(\Omega)} (\nabla_xv_1)(\nabla_xv_2)^t\mathrm{div}_y\omega_y\,dz\\\nonumber
  &- \lambda^{-1}[\tilde \phi]\int_{\partial\tilde \phi(\Omega)} (\omega_x \mathbf{n}_x^t)(\nabla_xv_1)(\nabla_xv_2)^t\,d\sigma_z\\\nonumber
  &+ \lambda^{-1}[\tilde \phi]\int_{\partial\tilde \phi(\Omega)}\left(
  (\omega_x\nabla_xv_1^t)(\mathbf{n}_x\nabla_xv_2^t)+
  (\omega_x\nabla_xv_2^t)(\mathbf{n}_x\nabla_xv_1^t)\right) \,d\sigma_z\\\nonumber
    &+ \lambda^{-1}[\tilde \phi] \int_{\tilde \phi(\Omega)}|x|^{2s}\Delta_yv_1 (\nabla_x v_2)\omega_x^t\,dz\\\nonumber
      &+ \lambda^{-1}[\tilde \phi] \int_{\tilde \phi(\Omega)}|x|^{2s}\Delta_yv_2 (\nabla_x v_1)\omega_x^t\,dz\\\nonumber
  &- \lambda^{-1}[\tilde \phi]\int_{\tilde \phi(\Omega)}|x|^{2s} (\nabla_yv_1)(\nabla_y v_2)^t\mathrm{div}_x\omega_x\,dz\\\nonumber
  &- \lambda^{-1}[\tilde \phi]\int_{\partial\tilde \phi(\Omega)}|x|^{2s} (\omega_y \mathbf{n}_y^t)(\nabla_yv_1)(\nabla_yv_2)^t\,d\sigma_z\\\nonumber
 &+ \lambda^{-1}[\tilde \phi]\int_{\partial\tilde \phi(\Omega)}\left(|x|^{2s} 
  (\omega_y\nabla_yv_1^t)(\mathbf{n}_y\nabla_yv_2^t)+|x|^{2s} 
  (\omega_y\nabla_yv_2^t)(\mathbf{n}_y\nabla_yv_1^t)\right) \,d\sigma_z\\\nonumber
   &+ \lambda^{-1}[\tilde \phi]\int_{\tilde \phi(\Omega)} \nabla_y v_1
  \Big(D_x\omega_y + |x|^{2s}(D_y\omega_x)^t \Big)
  \left( \nabla_x v_2 \right)^t \,dz\\\nonumber
  &+ \lambda^{-1}[\tilde \phi]\int_{\tilde \phi(\Omega)} \nabla_y v_2
  \Big(D_x\omega_y + |x|^{2s}(D_y\omega_x)^t \Big)
  \left( \nabla_x v_1 \right)^t \,dz\\
        & - \lambda^{-1}[\tilde \phi] \int_{\tilde \phi(\Omega)}2s|x|^{2s-2}(\nabla_y v_1)(\nabla_y v_2)^t x \cdot \omega_x \,dz.\nonumber
 \end{align}
We now set
 \begin{align*}
I_1&:= \lambda^{-1}[\tilde \phi] \int_{\tilde \phi(\Omega)}\Delta_xv_1 (\nabla_y v_2)\omega_y^t\,dz\\ \nonumber
&\quad+
\lambda^{-1}[\tilde \phi] \int_{\tilde \phi(\Omega)}\Delta_xv_2 (\nabla_y v_1)\omega_y^t\,dz\\ \nonumber
  &\quad- \lambda^{-1}[\tilde \phi]\int_{\tilde \phi(\Omega)} (\nabla_xv_1)(\nabla_xv_2)^t\mathrm{div}_y\omega_y\,dz\\
  &\quad+ \lambda^{-1}[\tilde \phi] \int_{\tilde \phi(\Omega)}|x|^{2s}\Delta_yv_1 (\nabla_x v_2)\omega_x^t\,dz\\\nonumber
      &\quad+ \lambda^{-1}[\tilde \phi] \int_{\tilde \phi(\Omega)}|x|^{2s}\Delta_yv_2 (\nabla_x v_1)\omega_x^t\,dz\\\nonumber
  &\quad- \lambda^{-1}[\tilde \phi]\int_{\tilde \phi(\Omega)}|x|^{2s} (\nabla_yv_1)(\nabla_yv_2)^t\mathrm{div}_x\omega_x\,dz\\
    &\quad+ \lambda^{-1}[\tilde \phi]\int_{\tilde \phi(\Omega)} \nabla_y v_1
  \Big(D_x\omega_y + |x|^{2s}(D_y\omega_x)^t \Big)
  \left( \nabla_x v_2 \right)^t \,dz\\\nonumber
  &\quad+ \lambda^{-1}[\tilde \phi]\int_{\tilde \phi(\Omega)} \nabla_y v_2
  \Big(D_x\omega_y + |x|^{2s}(D_y\omega_x)^t \Big)
  \left( \nabla_x v_1 \right)^t \,dz\\
      &\quad - \lambda^{-1}[\tilde \phi] \int_{\tilde \phi(\Omega)}2s|x|^{2s-2}(\nabla_y v_1)(\nabla_y v_2)^t x \omega_x^t \,dz\, ,\\
I_2&:= - \lambda^{-1}[\tilde \phi]\int_{\partial\tilde \phi(\Omega)} (\omega_x \mathbf{n}_x^t)(\nabla_x v_1)(\nabla_x v_2)^t\,d\sigma_z \\ & \quad- \lambda^{-1}[\tilde \phi]\int_{\partial\tilde \phi(\Omega)}|x|^{2s} (\omega_y \mathbf{n}_y^t)(\nabla_y v_1)(\nabla_y v_2)^t\,d\sigma_z\, ,\\
I_3&:= \lambda^{-1}[\tilde \phi]\int_{\partial\tilde \phi(\Omega)}\left(
  (\omega_x\nabla_xv_1^t)(\mathbf{n}_x\nabla_xv_2^t)+
  (\omega_x\nabla_xv_2^t)(\mathbf{n}_x\nabla_xv_1^t)\right) \,d\sigma_z\\\nonumber 
  &\quad+ \lambda^{-1}[\tilde \phi]\int_{\partial\tilde \phi(\Omega)}\left(|x|^{2s} 
  (\omega_y\nabla_yv_1^t)(\mathbf{n}_y\nabla_yv_2^t)+|x|^{2s} 
  (\omega_y\nabla_yv_2^t)(\mathbf{n}_y\nabla_yv_1^t)\right) \,d\sigma_z\, .
 \end{align*}
As a consequence, equality  \eqref{hadform2} reads as
 \begin{equation}\label{hadform2bis}
Q_{G,\tilde \phi}\left[  d_{|\phi = \tilde \phi} T_{G,\phi}[\psi][u_1],u_2\right] = I_1+I_2+I_3\, .
 \end{equation}
 Now, we rewrite the terms $I_2$ and $I_3$. We first consider $I_2$. 
\begin{align*}
I_2=&- \lambda^{-1}[\tilde \phi]\int_{\partial\tilde \phi(\Omega)} (\omega_x \mathbf{n}_x^t)(\nabla_x v_1)(\nabla_x v_2)^t\,d\sigma_z -  \lambda^{-1}[\tilde \phi]\int_{\partial\tilde \phi(\Omega)}|x|^{2s} (\omega_y \mathbf{n}_y^t)(\nabla_y v_1)(\nabla_y v_2)^t\,d\sigma_z\\
=&- \lambda^{-1}[\tilde \phi]\int_{\partial\tilde \phi(\Omega)} (\omega_x \mathbf{n}_x^t)(\nabla_G v_1)(\nabla_G v_2)^t\,d\sigma_z
+\lambda^{-1}[\tilde \phi]\int_{\partial\tilde \phi(\Omega)}|x|^{2s} (\omega_x \mathbf{n}_x^t)(\nabla_y v_1)(\nabla_y v_2)^t\,d\sigma_z\\
&- \lambda^{-1}[\tilde \phi]\int_{\partial\tilde \phi(\Omega)} (\omega_y \mathbf{n}_y^t)(\nabla_G v_1)(\nabla_G v_2)^t\,d\sigma_z
+\lambda^{-1}[\tilde \phi]\int_{\partial\tilde \phi(\Omega)} (\omega_y \mathbf{n}_y^t)(\nabla_x v_1)(\nabla_x v_2)^t\,d\sigma_z\\
=&- \lambda^{-1}[\tilde \phi]\int_{\partial\tilde \phi(\Omega)} (\omega \mathbf{n}^t)(\nabla_G v_1)(\nabla_G v_2)^t\,d\sigma_z
+\lambda^{-1}[\tilde \phi]\int_{\partial\tilde \phi(\Omega)}|x|^{2s} (\omega_x \mathbf{n}_x^t)(\nabla_y v_1)(\nabla_y v_2)^t\,d\sigma_z\\
&+\lambda^{-1}[\tilde \phi]\int_{\partial\tilde \phi(\Omega)} (\omega_y \mathbf{n}_y^t)(\nabla_x v_1)(\nabla_x v_2)^t\,d\sigma_z.
\end{align*}
We now consider the last two summands of the right hand side of the previous equality. We have: 
\begin{align*}
\lambda^{-1}[\tilde \phi]&\int_{\partial \tilde \phi(\Omega)}|x|^{2s} (\omega_x \mathbf{n}_x^t)(\nabla_y v_1)(\nabla_y v_2)^t\,d\sigma_z \\
=&\; \lambda^{-1}[\tilde \phi]\int_{\tilde \phi(\Omega)}\mathrm{div}_x \big(|x|^{2s}(\nabla_y v_1)(\nabla_y v_2)^t\omega_x\big)\,dz\\
=& \;\lambda^{-1}[\tilde \phi]\int_{\tilde \phi(\Omega)}|x|^{2s}(\nabla_y v_1)(\nabla_y v_2)^t\mathrm{div}_x \omega_x\,dz 
+ \lambda^{-1}[\tilde \phi]\int_{\tilde \phi(\Omega)}\nabla_x \Big(|x|^{2s}(\nabla_y v_1)(\nabla_y v_2)^t\Big)\omega_x^t\,dz \\
= & \;\lambda^{-1}[\tilde \phi]\int_{\tilde \phi(\Omega)}|x|^{2s}(\nabla_y v_1)(\nabla_y v_2)^t\mathrm{div}_x \omega_x\,dz
+ \lambda^{-1}[\tilde \phi]\int_{\tilde \phi(\Omega)}|x|^{2s}\nabla_y v_1\Big(D_x \nabla_y v_2\Big)\omega^t_x\,dz\\
&+ \lambda^{-1}[\tilde \phi]\int_{\tilde \phi(\Omega)}|x|^{2s}\nabla_y v_2\Big(D_x \nabla_y v_1\Big)\omega^t_x\,dz+ \lambda^{-1}[\tilde \phi]\int_{\tilde \phi(\Omega)}2s|x|^{2s-2} x\cdot\omega_x (\nabla_y v_1)(\nabla_y v_2)^t\,dz,
\end{align*}
and 
\begin{align*}
\lambda^{-1}[\tilde \phi]\int_{\partial \tilde \phi(\Omega)}& (\omega_y \mathbf{n}_y^t)(\nabla_x v_1)(\nabla_x v_2)^t\,d\sigma_z \\
=&\; \lambda^{-1}[\tilde \phi]\int_{\tilde \phi(\Omega)}\mathrm{div}_y \big((\nabla_x v_1)(\nabla_x v_2)^t\omega_y\big)\,dz\\
=& \;\lambda^{-1}[\tilde \phi]\int_{\tilde \phi(\Omega)}(\nabla_x v_1)(\nabla_x v_2)^t\mathrm{div}_y \omega_y\,dz 
+ \lambda^{-1}[\tilde \phi]\int_{\tilde \phi(\Omega)}\nabla_y \Big((\nabla_x v_1)(\nabla_x v_2)^t\Big)\omega_y^t\,dz \\
= & \;\lambda^{-1}[\tilde \phi]\int_{\tilde \phi(\Omega)}(\nabla_x v_1)(\nabla_x v_2)^t\mathrm{div}_y \omega_y\,dz
+ \lambda^{-1}[\tilde \phi]\int_{\tilde \phi(\Omega)}\nabla_x v_1\Big(D_y \nabla_x v_2\Big)\omega^t_y\,dz\\
&\,+ \lambda^{-1}[\tilde \phi]\int_{\tilde \phi(\Omega)}\nabla_x v_2\Big(D_y \nabla_x v_1\Big)\omega^t_y\,dz.
\end{align*}
Therefore, we deduce the following expression for $I_2$
\begin{equation}\label{eq:I2}
\begin{split}
I_2=&- \lambda^{-1}[\tilde \phi]\int_{\partial\tilde \phi(\Omega)} (\omega \mathbf{n}^t)(\nabla_G v_1)(\nabla_G v_2)^t\,d\sigma_z\\
& +\lambda^{-1}[\tilde \phi]\int_{\tilde \phi(\Omega)}|x|^{2s}(\nabla_y v_1)(\nabla_y v_2)^t\mathrm{div}_x \omega_x\,dz
+ \lambda^{-1}[\tilde \phi]\int_{\tilde \phi(\Omega)}|x|^{2s}\nabla_y v_1\Big(D_x \nabla_y v_2\Big)\omega^t_x\,dz\\
&+ \lambda^{-1}[\tilde \phi]\int_{\tilde \phi(\Omega)}|x|^{2s}\nabla_y v_2\Big(D_x \nabla_y v_1\Big)\omega^t_x\,dz+ \lambda^{-1}[\tilde \phi]\int_{\tilde \phi(\Omega)}2s|x|^{2s-2}x\omega_x^t(\nabla_y v_1)(\nabla_y v_2)^t\,dz\\
&+\lambda^{-1}[\tilde \phi]\int_{\tilde \phi(\Omega)}(\nabla_x v_1)(\nabla_x v_2)^t\mathrm{div}_y \omega_y\,dz
+ \lambda^{-1}[\tilde \phi]\int_{\tilde \phi(\Omega)}\nabla_x v_1\Big(D_y \nabla_x v_2\Big)\omega^t_y\,dz\\
&+ \lambda^{-1}[\tilde \phi]\int_{\tilde \phi(\Omega)}\nabla_x v_2\Big(D_y \nabla_x v_1\Big)\omega^t_y\,dz.
\end{split}
\end{equation}
Next, we turn to consider  $I_3$.  
We recall that  $\mathbf{n}_G$ is the outward field to $\partial\Omega$ defined in \eqref{grushinnormal}. Therefore
\begin{align}\label{hadform3}
I_3=&\,\lambda^{-1}[\tilde \phi]\int_{\partial\tilde \phi(\Omega)}\left(
  (\omega_x\nabla_xv_1^t)(\mathbf{n}_x\nabla_xv_2^t)+
  (\omega_x\nabla_xv_2^t)(\mathbf{n}_x\nabla_xv_1^t)\right) \,d\sigma_z\\\nonumber 
  &+ \lambda^{-1}[\tilde \phi]\int_{\partial\tilde \phi(\Omega)}\left(|x|^{2s} 
  (\omega_y\nabla_yv_1^t)(\mathbf{n}_y\nabla_yv_2^t)+|x|^{2s} 
  (\omega_y\nabla_yv_2^t)(\mathbf{n}_y\nabla_yv_1^t)\right) \,d\sigma_z\\ \nonumber
  =&\;\lambda^{-1}[\tilde \phi]\int_{\partial\tilde \phi(\Omega)}\left(
  (\omega_x\nabla_xv_1^t)(\mathbf{n}_G\nabla_Gv_2^t)+
  (\omega_x\nabla_xv_2^t)(\mathbf{n}_G\nabla_Gv_1^t)\right) \,d\sigma_z\\\nonumber
  &- \lambda^{-1}[\tilde \phi]\int_{\partial\tilde \phi(\Omega)} 
  |x|^{2s}(\omega_x\nabla_xv_1^t)(\mathbf{n}_y\nabla_yv_2^t)\,d\sigma_z 
  - \lambda^{-1}[\tilde \phi]\int_{\partial\tilde \phi(\Omega)} 
  |x|^{2s}(\omega_x\nabla_xv_2^t)(\mathbf{n}_y\nabla_yv_1^t)\,d\sigma_z\\\nonumber
  &+ \lambda^{-1}[\tilde \phi]\int_{\partial\tilde \phi(\Omega)}\left( 
  (\omega_y\nabla_yv_1^t)(\mathbf{n}_G\nabla_Gv_2^t)+ 
  (\omega_y\nabla_yv_2^t)(\mathbf{n}_G\nabla_Gv_1^t)\right) \,d\sigma_z\\ \nonumber
  &-  \lambda^{-1}[\tilde \phi]\int_{\partial\tilde \phi(\Omega)}
  (\omega_y\nabla_yv_1^t)(\mathbf{n}_x\nabla_xv_2^t)\,d\sigma_z
  -  \lambda^{-1}[\tilde \phi]\int_{\partial\tilde \phi(\Omega)}
  (\omega_y\nabla_yv_2^t)(\mathbf{n}_x\nabla_xv_1^t)\,d\sigma_z \\\nonumber
  =&\; \lambda^{-1}[\tilde \phi]\int_{\partial\tilde \phi(\Omega)} 
  (\omega \nabla v_1^t)(\mathbf{n}_G\nabla_Gv_2^t)+ (\omega \nabla v_2^t)(\mathbf{n}_G\nabla_Gv_1^t)\,d\sigma_z \\ \nonumber 
  & - \lambda^{-1}[\tilde \phi]\int_{\partial\tilde \phi(\Omega)} 
  |x|^{2s}(\omega_x\nabla_xv_1^t)(\mathbf{n}_y\nabla_yv_2^t)\,d\sigma_z 
  - \lambda^{-1}[\tilde \phi]\int_{\partial\tilde \phi(\Omega)} 
  |x|^{2s}(\omega_x\nabla_xv_2^t)(\mathbf{n}_y\nabla_yv_1^t)\,d\sigma_z\\\nonumber
   &-  \lambda^{-1}[\tilde \phi]\int_{\partial\tilde \phi(\Omega)}
  (\omega_y\nabla_yv_1^t)(\mathbf{n}_x\nabla_xv_2^t)\,d\sigma_z
  -  \lambda^{-1}[\tilde \phi]\int_{\partial\tilde \phi(\Omega)}
  (\omega_y\nabla_yv_2^t)(\mathbf{n}_x\nabla_xv_1^t)\,d\sigma_z.  
\end{align}
We consider the last four  terms in the right hand side of the previous equation:
\begin{align*}
 - \lambda^{-1}[\tilde \phi]\int_{\partial\tilde \phi(\Omega)} &
  |x|^{2s}(\omega_x\nabla_xv_1^t)(\mathbf{n}_y\nabla_yv_2^t)\,d\sigma_z 
  - \lambda^{-1}[\tilde \phi]\int_{\partial\tilde \phi(\Omega)} 
  |x|^{2s}(\omega_x\nabla_xv_2^t)(\mathbf{n}_y\nabla_yv_1^t)\,d\sigma_z\\\nonumber
  =& - \lambda^{-1}[\tilde \phi]\int_{\tilde \phi(\Omega)} \mathrm{div}_y\Big(|x|^{2s}(\omega_x\nabla_x v_1^t)\nabla_y v_2+|x|^{2s}(\omega_x\nabla_x v_2^t)\nabla_y v_1  \Big) \,dz\\
  =&-\lambda^{-1}[\tilde \phi]\int_{\tilde \phi(\Omega)} |x|^{2s}\Delta_y v_2 (\omega_x\nabla_x v_1^t) \,dz -\lambda^{-1}[\tilde \phi] \int_{\tilde \phi(\Omega)}|x|^{2s}\nabla_y(\omega_x \nabla_x v_1^t) \nabla_y v_2^t\,dz\\
  &-\lambda^{-1}[\tilde \phi]\int_{\tilde \phi(\Omega)} |x|^{2s}\Delta_y v_1 (\omega_x\nabla_x v_2^t) \,dz -\lambda^{-1}[\tilde \phi] \int_{\tilde \phi(\Omega)}|x|^{2s}\nabla_y(\omega_x \nabla_x v_2^t) \nabla_y v_1^t\,dz\\
  =&-\lambda^{-1}[\tilde \phi]\int_{\tilde \phi(\Omega)} |x|^{2s}\Delta_y v_2 (\omega_x\nabla_x v_1^t) \,dz -\lambda^{-1}[\tilde \phi]\int_{\tilde \phi(\Omega)} |x|^{2s}\Delta_y v_1 (\omega_x\nabla_x v_2^t) \,dz\\ 
    &-\lambda^{-1}[\tilde \phi] \int_{\tilde \phi(\Omega)}|x|^{2s}\omega_x (D_y\nabla_x v_1) \nabla_y v_2^t\,dz
    -\lambda^{-1}[\tilde \phi] \int_{\tilde \phi(\Omega)}|x|^{2s}\omega_x (D_y\nabla_x v_2) \nabla_y v_1^t\,dz\\
   &-\lambda^{-1}[\tilde \phi] \int_{\tilde \phi(\Omega)}|x|^{2s}\nabla_x v_1(D_y\omega_x ) \nabla_y v_2^t\,dz
   -\lambda^{-1}[\tilde \phi] \int_{\tilde \phi(\Omega)}|x|^{2s}\nabla_x v_2(D_y\omega_x ) \nabla_y v_1^t\,dz,
\end{align*} 
and similarly
\begin{align*}
-  \lambda^{-1}[\tilde \phi]\int_{\partial\tilde \phi(\Omega)}&
  (\omega_y\nabla_yv_1^t)(\mathbf{n}_x\nabla_xv_2^t)\,d\sigma_z
  -  \lambda^{-1}[\tilde \phi]\int_{\partial\tilde \phi(\Omega)}
  (\omega_y\nabla_yv_2^t)(\mathbf{n}_x\nabla_xv_1^t)\,d\sigma_z\\
   =&-\lambda^{-1}[\tilde \phi]\int_{\tilde \phi(\Omega)} \Delta_x v_2 (\omega_y\nabla_y v_1^t) \,dz -\lambda^{-1}[\tilde \phi]\int_{\tilde \phi(\Omega)} \Delta_x v_1 (\omega_y\nabla_y v_2^t) \,dz\\ 
    &-\lambda^{-1}[\tilde \phi] \int_{\tilde \phi(\Omega)}\omega_y (D_x\nabla_y v_1) \nabla_x v_2^t\,dz
    -\lambda^{-1}[\tilde \phi] \int_{\tilde \phi(\Omega)}\omega_y (D_x\nabla_y v_2) \nabla_x v_1^t\,dz\\
   &-\lambda^{-1}[\tilde \phi] \int_{\tilde \phi(\Omega)}\nabla_y v_1(D_x\omega_y ) \nabla_x v_2^t\,dz
   -\lambda^{-1}[\tilde \phi] \int_{\tilde \phi(\Omega)}\nabla_y v_2(D_x\omega_y ) \nabla_x v_1^t\,dz.
\end{align*}
We can now rewrite the right hand side of equation \eqref{hadform3} and deduce that
\begin{align}\label{eq:I3}
I_3=&\, \lambda^{-1}[\tilde \phi]\int_{\partial\tilde \phi(\Omega)} 
  (\omega \nabla v_1^t)(\mathbf{n}_G\nabla_Gv_2^t)+ (\omega \nabla v_2^t)(\mathbf{n}_G\nabla_Gv_1^t)\,d\sigma_z \\ \nonumber 
 &-\lambda^{-1}[\tilde \phi]\int_{\tilde \phi(\Omega)} |x|^{2s}\Delta_y v_2 (\omega_x\nabla_x v_1^t) \,dz -\lambda^{-1}[\tilde \phi]\int_{\tilde \phi(\Omega)} |x|^{2s}\Delta_y v_1 (\omega_x\nabla_x v_2^t) \,dz\\ \nonumber
    &-\lambda^{-1}[\tilde \phi] \int_{\tilde \phi(\Omega)}|x|^{2s}\omega_x (D_y\nabla_x v_1) \nabla_y v_2^t\,dz
    -\lambda^{-1}[\tilde \phi] \int_{\tilde \phi(\Omega)}|x|^{2s}\omega_x (D_y\nabla_x v_2) \nabla_y v_1^t\,dz\\\nonumber
   &-\lambda^{-1}[\tilde \phi] \int_{\tilde \phi(\Omega)}|x|^{2s}\nabla_x v_1(D_y\omega_x ) \nabla_y v_2^t\,dz
   -\lambda^{-1}[\tilde \phi] \int_{\tilde \phi(\Omega)}|x|^{2s}\nabla_x v_2(D_y\omega_x ) \nabla_y v_1^t\,dz\\\nonumber
   &-\lambda^{-1}[\tilde \phi]\int_{\tilde \phi(\Omega)} \Delta_x v_2 (\omega_y\nabla_y v_1^t) \,dz -\lambda^{-1}[\tilde \phi]\int_{\tilde \phi(\Omega)} \Delta_x v_1 (\omega_y\nabla_y v_2^t) \,dz\\ \nonumber
    &-\lambda^{-1}[\tilde \phi] \int_{\tilde \phi(\Omega)}\omega_y (D_x\nabla_y v_1) \nabla_x v_2^t\,dz
    -\lambda^{-1}[\tilde \phi] \int_{\tilde \phi(\Omega)}\omega_y (D_x\nabla_y v_2) \nabla_x v_1^t\,dz\\ \nonumber
   &-\lambda^{-1}[\tilde \phi] \int_{\tilde \phi(\Omega)}\nabla_y v_1(D_x\omega_y ) \nabla_x v_2^t\,dz
   -\lambda^{-1}[\tilde \phi] \int_{\tilde \phi(\Omega)}\nabla_y v_2(D_x\omega_y ) \nabla_x v_1^t\,dz.
\end{align}
By equalities \eqref{hadform2}, \eqref{hadform2bis}, \eqref{eq:I2}, and \eqref{eq:I3} and by noting that $(D_x\nabla_y v_l)^t =D_y\nabla_x v_l$, we deduce that
 \begin{align*}
 Q_{G,\tilde \phi}\left[  d_{|\phi = \tilde \phi} T_{G,\phi}[\psi][u_1],u_2\right] = &
   - \lambda^{-1}[\tilde \phi]\int_{\partial\tilde \phi(\Omega)} (\omega \mathbf{n}^t)(\nabla_G v_1)(\nabla_G v_2)^t\,d\sigma_z\\
&+ \lambda^{-1}[\tilde \phi]\int_{\partial\tilde \phi(\Omega)} 
  (\omega \nabla v_1^t)(\mathbf{n}_G\nabla_Gv_2^t)+ (\omega \nabla v_2^t)(\mathbf{n}_G\nabla_Gv_1^t)\,d\sigma_z.
 \end{align*}
We note that,  since $v_1,v_2 \in W^{1,2}_0(\tilde \phi(\Omega))\cap W^{2,2}(\tilde \phi(\Omega))$, the gradients $\nabla v_1, \nabla v_2 $ are parallel to $\mathbf{n}$ on $\partial \tilde\phi( \Omega)$ and $\nabla_Gv_1, \nabla_Gv_2$ are parallel to $\mathbf{n}_G$ on $\partial \tilde\phi( \Omega)$. Accordingly,
\begin{align*}
\nabla_G v_i = \nabla v_i\, I_G= (\nabla v_i \, \mathbf{n}^t)\mathbf{n} \,I_G = 
  \frac{\partial v_i }{\partial \mathbf{n}} \,\mathbf{n}_G, \qquad \mbox{ a.e. on  } \partial \tilde\phi( \Omega) \quad \mbox{ for } i \in \{1,2\}.
\end{align*}
 Then 
\begin{align*}
  \int_{\partial\tilde \phi(\Omega)} (\omega \mathbf{n}^t)(\nabla_G v_1)(\nabla_G v_2)^t\,d\sigma_z
     = \int_{\partial\tilde \phi(\Omega)} ( \omega \mathbf{n}^t) \frac{\partial v_1}{\partial \mathbf{n}}\frac{\partial v_2}{\partial \mathbf{n}}|\mathbf{n}_G|^2\,d\sigma_z\, ,
\end{align*}
and 
\begin{align*}
\int_{\partial\tilde \phi(\Omega)} 
  (\omega \nabla v_1^t)(\mathbf{n}_G\nabla_Gv_2^t)+ (\omega \nabla v_2^t)(\mathbf{n}_G\nabla_Gv_1^t)\,d\sigma_z
 =2\int_{\partial\tilde \phi(\Omega)} ( \omega \mathbf{n}^t) \frac{\partial v_1}{\partial \mathbf{n}}\frac{\partial v_2}{\partial \mathbf{n}}|\mathbf{n}_G|^2\,d\sigma_z.
\end{align*}
 We can finally conclude that
\begin{align*}
 Q_{G,\tilde \phi}\left[  d_{|\phi = \tilde \phi} T_{G,\phi}[\psi][u_1],u_2\right] =  \lambda^{-1}[\tilde \phi]\int_{\partial\tilde \phi(\Omega)} ( \omega \mathbf{n}^t) \frac{\partial v_1}{\partial \mathbf{n}}\frac{\partial v_2}{\partial \mathbf{n}}|\mathbf{n}_G|^2\,d\sigma_z.
  \end{align*}  
 \end{proof}
  Now, combining Theorem \ref{hadf}, formula \eqref{hadf2},  and Lemma \ref{Qformula} we are able to deduce our main result regarding the Hadamard-type formula for the shape differential of the symmetric functions of the eigenvalues.
 \begin{theorem}\label{Nhadf}
 Let  $\Omega$ and $O$ be as in \eqref{Omega_def}.   Let $F$ be a finite nonempty subset 
 of $\mathbb{N}$. Let $\tau \in \{1,\ldots,|F|\}$.  
Let $\tilde \phi \in \Theta^F_{\Omega,O}$ and  let
 $\lambda_F[\tilde \phi]$ be the common value of all the eigenvalues $\{\lambda_j[\tilde \phi]\}_{j \in F}$. 
 Let $\{v_l\}_{l \in F}$ be an orthonormal basis in 
 $(W^{1,2}_{G,0}(\tilde \phi(\Omega)),Q_{G})$
  of the eigenspace associated with $\lambda_F[\tilde \phi]$.
 Suppose that  $\tilde \phi(\Omega)$ is of class $C^1$ and $\{v_l\}_{l \in F} \subseteq W^{1,2}_0(\tilde \phi(\Omega)) \cap W^{2,2}(\tilde \phi(\Omega))$. Then  the Frech\'et differential of the map  $\Lambda_{F,\tau}$ at the
 point $\tilde \phi$ is delivered by the formula
 \begin{align}\label{Nhadf1}
 d_{|\phi = \tilde \phi}(\Lambda_{F,\tau})[\psi]= \, & -\lambda_F^{\tau}[\tilde \phi]\binom{|F|-1}{\tau-1}  \sum_{l \in F} 
     \int_{\partial\tilde \phi(\Omega)} (\psi \circ \tilde \phi^{(-1)} \mathbf{n}^t) \left(\frac{\partial v_l}{\partial \mathbf{n}}\right)^2|\mathbf{n}_G|^2\,d\sigma  \quad \forall \psi \in L_{\Omega,O}.
  \end{align}
 \end{theorem}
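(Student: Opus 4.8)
The plan is to combine the abstract differentiation formula obtained as an intermediate step in the proof of Theorem \ref{hadf} with the boundary-integral representation of Lemma \ref{Qformula}, specialized to the diagonal; no new computation is really needed, since the hard work has already been done in Lemma \ref{Qformula}.

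First I would recall formula \eqref{hadf2} from the proof of Theorem \ref{hadf}: writing $u_l := v_l \circ \tilde\phi$, so that $\{u_l\}_{l \in F}$ is a $Q_{G,\tilde\phi}$-orthonormal basis of the eigenspace of $T_{G,\tilde\phi}$ corresponding to the eigenvalue $\lambda_F^{-1}[\tilde\phi]$, one has
\[
d_{|\phi = \tilde \phi}(\Lambda_{F,\tau})[\psi] = -\lambda_F^{1+\tau}[\tilde \phi]\binom{|F|-1}{\tau-1} \sum_{l \in F}Q_{G,\tilde \phi}\left[ d_{|\phi = \tilde \phi} T_{G,\phi}[\psi][u_l],u_l\right] \qquad \forall \psi \in L_{\Omega,O}.
\]
This step rests only on \cite[Theorem 2.30]{LaLa04} and requires no regularity of $\partial\tilde\phi(\Omega)$ or of the eigenfunctions.

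Next I would apply Lemma \ref{Qformula} with the choice $v_1 = v_2 = v_l$ and $\lambda[\tilde\phi] = \lambda_F[\tilde\phi]$. The extra hypotheses of the present theorem, namely that $\tilde\phi(\Omega)$ be of class $C^1$ and that each $v_l \in W^{1,2}_0(\tilde\phi(\Omega)) \cap W^{2,2}(\tilde\phi(\Omega))$, are exactly those under which that lemma is stated, so we get
\[
Q_{G,\tilde \phi}\left[ d_{|\phi = \tilde \phi} T_{G,\phi}[\psi][u_l],u_l\right] = \lambda_F^{-1}[\tilde \phi] \int_{\partial\tilde \phi(\Omega)} (\psi \circ \tilde\phi^{(-1)} \mathbf{n}^t) \left(\frac{\partial v_l}{\partial \mathbf{n}}\right)^2 |\mathbf{n}_G|^2 \, d\sigma \qquad \forall \psi \in L_{\Omega,O}.
\]
Substituting this into the previous display and using $\lambda_F^{1+\tau}[\tilde\phi]\,\lambda_F^{-1}[\tilde\phi] = \lambda_F^{\tau}[\tilde\phi]$ yields precisely \eqref{Nhadf1}, which completes the argument.

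Since all the substantive analysis — the iterated integrations by parts that convert the volume integrals of \eqref{hadf1} into a single boundary integral, together with the observation that $\nabla v_l$ is parallel to $\mathbf{n}$ (hence $\nabla_G v_l$ is parallel to $\mathbf{n}_G$) on $\partial\tilde\phi(\Omega)$ — has already been carried out in Lemma \ref{Qformula}, this proof amounts to bookkeeping. I expect the only points needing care to be the legitimacy of the diagonal specialization $v_1 = v_2$ of Lemma \ref{Qformula} (which is immediate, as the lemma is proved for an arbitrary pair of eigenfunctions sharing the same eigenvalue) and the correct tracking of the powers of $\lambda_F[\tilde\phi]$, so that the factor $\lambda_F^{-1}[\tilde\phi]$ produced by the lemma cancels against $\lambda_F^{1+\tau}[\tilde\phi]$ coming from \eqref{hadf2}.
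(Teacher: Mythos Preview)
Your proposal is correct and follows exactly the approach indicated by the paper, which simply states that the result follows by combining Theorem \ref{hadf}, formula \eqref{hadf2}, and Lemma \ref{Qformula}. You have spelled out precisely this combination, including the correct tracking of the powers of $\lambda_F[\tilde\phi]$.
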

  \begin{remark}\label{rem:had}
 In order to prove   formula \eqref{Qformula1} and subsequently the Hadamard formula \eqref{Nhadf1} we had to assume some extra regularity for the eigenfunctions to avoid regularity problems near 
$\partial\tilde \phi(\Omega) \cap \{x=0\}$. However, if $\psi \in  L_{\Omega,O}$ is such that 
$\psi_{|\Omega \cap O}=0$,
then, since any problem around the degenerate set is canceled by $\psi$, formulas \eqref{Qformula1} and 
 \eqref{Nhadf1}   hold without requiring that the eigenfunctions are of class $W^{1,2}_0 \cap W^{2,2}$. 
 \end{remark}
Next, we consider the case of a family of domain perturbations depending real analytically upon one scalar parameter. In this case it is possible to prove a Rellich-Nagy-type theorem and describe all 
 the branches splitting from a multiple eigenvalue of multiplicity $m$ by means of $m$ real analytic functions of the scalar parameter. Namely, we have the following.
 \begin{theorem}\label{RN}
Let  $\Omega$ and $O$ be as in \eqref{Omega_def}.
Let $\tilde \phi \in \mathcal{A}_{\Omega,O}$ and $\{\phi_\varepsilon\}_{\varepsilon \in \mathbb{R}}\subseteq \mathcal{A}_{\Omega,O}$ 
be a family depending real analytically on $\varepsilon$ and such that 
$\phi_0 = \tilde \phi$.   Let $\lambda$ be a Dirichlet Grushin eigenvalue on $\tilde \phi (\Omega)$ of multiplicity $m$. Let $\lambda = 
\lambda_n[\tilde\phi] = \cdots = \lambda_{n+m-1}[\tilde \phi]$ for some $n \in \mathbb{N}$.
 Let $v_1, \ldots, v_m$ be an orthonormal basis  in 
 $(W^{1,2}_{G,0}(\tilde \phi(\Omega)),Q_{G})$   of the eigenspace 
 associated with $\lambda$. Suppose that $\tilde \phi(\Omega)$ is of class $C^1$ and that 
  $v_1, \ldots, v_m \in W^{1,2}_{0}(\tilde \phi(\Omega)) \cap W^{2,2}(\tilde \phi(\Omega))$.
 
Then there exist an open interval $I \subseteq \mathbb{R}$ containing zero  and $m$ real analytic functions 
$g_1, \ldots , g_m$ from $I$ to $\mathbb{R}$ such that 
$\{\lambda_n[\phi_\varepsilon], \ldots , \lambda_{n+m-1}[ \phi_\varepsilon]\} = \{g_1(\varepsilon), \ldots , g_m(\varepsilon)\}$ for all $\varepsilon \in I$. Moreover, the derivatives $g_1'(0),\ldots,g_m'(0)$ of the functions
$g_1, \ldots , g_m$ at zero coincides with the eigenvalues of the matrix
\begin{equation}\label{RN1}
\left( -\lambda\int_{\partial\tilde \phi(\Omega)}  (\dot\phi_0 \circ  {\tilde\phi}^{(-1)} \mathbf{n}^t)  \frac{\partial v_i}{\partial \mathbf{n}}\frac{\partial v_j}{\partial \mathbf{n}}|\mathbf{n}_G|^2\,d\sigma\right)_{i,j= 1,\ldots, m},
\end{equation}
 where $\dot \phi_0$ denotes the derivative at $\varepsilon = 0$ of the map 
$\varepsilon \mapsto \phi_\varepsilon$. The same conclusion holds dropping the regularity assumption on the eigenfunctions and requiring that ${\phi_\varepsilon}_{|\Omega \cap O}$ is the identity map for all $\varepsilon \in \mathbb{R}$.
\end{theorem}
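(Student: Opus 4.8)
The plan is to transplant the spectral problem to the fixed domain $\Omega$ and then to invoke the abstract Rellich--Nagy theory for real analytic families of compact self-adjoint operators developed by Lamberti and Lanza de Cristoforis \cite[\S 2]{LaLa04}. First I would put $F:=\{n,n+1,\ldots,n+m-1\}$: since $\lambda$ has multiplicity exactly $m$ one has $\lambda_{n-1}[\tilde\phi]<\lambda<\lambda_{n+m}[\tilde\phi]$ (with the obvious modification if $n=1$), so $\tilde\phi\in\Theta^F_{\Omega,O}\subseteq\mathcal{A}^F_{\Omega,O}$, and by Theorem~\ref{thm:realanalsym} the set $\mathcal{A}^F_{\Omega,O}$ is open in $L_{\Omega,O}$. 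As $\varepsilon\mapsto\phi_\varepsilon$ is (real analytic, hence) continuous into $L_{\Omega,O}$, we get $\phi_\varepsilon\in\mathcal{A}^F_{\Omega,O}$ for all $\varepsilon$ in a neighbourhood of $0$, which means that the eigenvalues with index in $F$ form an isolated cluster in the spectrum for small $\varepsilon$. Working with the compact operators $T_{G,\phi_\varepsilon}$ of \eqref{def:Tphi} acting on $(W^{1,2}_{G,0}(\Omega),Q_{G,\phi_\varepsilon})$, and recalling that $\mu_j[\phi_\varepsilon]=\lambda_j[\phi_\varepsilon]^{-1}$ and that \eqref{phiwsp} is equivalent to $T_{G,\phi_\varepsilon}[u]=\mu u$, the branches splitting from $\lambda$ correspond, via $\mu=\lambda^{-1}$, to the cluster $\{\mu_n[\phi_\varepsilon],\ldots,\mu_{n+m-1}[\phi_\varepsilon]\}$.

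Next I would use that, by Proposition~\ref{propTp}(ii), the map $\phi\mapsto(T_{G,\phi},Q_{G,\phi})$ is real analytic on $\mathcal{A}_{\Omega,O}$; composing it with the real analytic curve $\varepsilon\mapsto\phi_\varepsilon$ yields a real analytic family $\varepsilon\mapsto(T_{G,\phi_\varepsilon},Q_{G,\phi_\varepsilon})$ of compact operators, self-adjoint with respect to the analytically varying scalar products $Q_{G,\phi_\varepsilon}$. The abstract results of \cite[Theorem~2.30]{LaLa04} then furnish an open interval $I\ni 0$ and real analytic functions $\tilde g_1,\ldots,\tilde g_m\colon I\to\mathbb{R}$ such that $\{\mu_n[\phi_\varepsilon],\ldots,\mu_{n+m-1}[\phi_\varepsilon]\}=\{\tilde g_1(\varepsilon),\ldots,\tilde g_m(\varepsilon)\}$ for all $\varepsilon\in I$ and such that $\tilde g_1'(0),\ldots,\tilde g_m'(0)$ are the eigenvalues of the matrix $\bigl(Q_{G,\tilde\phi}[\dot T_0[u_i],u_j]\bigr)_{i,j=1,\ldots,m}$, where $\dot T_0:=\frac{d}{d\varepsilon}\big|_{\varepsilon=0}T_{G,\phi_\varepsilon}$ and, by Lemma~\ref{comphomeo}, $u_i:=v_i\circ\tilde\phi$ is an orthonormal basis in $(W^{1,2}_{G,0}(\Omega),Q_{G,\tilde\phi})$ of the eigenspace of $T_{G,\tilde\phi}$ corresponding to $\mu=\lambda^{-1}$.

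Since $\tilde g_l(0)=\lambda^{-1}\neq 0$, I would then set $g_l:=1/\tilde g_l$, which is real analytic on a (possibly smaller, still denoted) interval $I$; the $g_l$'s parametrize $\{\lambda_n[\phi_\varepsilon],\ldots,\lambda_{n+m-1}[\phi_\varepsilon]\}$ and satisfy $g_l'(0)=-\lambda^2\,\tilde g_l'(0)$. By the chain rule in Banach spaces one has $\dot T_0=d_{|\phi=\tilde\phi}T_{G,\phi}[\dot\phi_0]$, with $\dot\phi_0\in L_{\Omega,O}$ because $L_{\Omega,O}$ is a closed linear subspace of $\mathrm{Lip}(\Omega)^N$. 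Under the standing hypotheses $\tilde\phi(\Omega)$ is of class $C^1$ and $v_1,\ldots,v_m\in W^{1,2}_0(\tilde\phi(\Omega))\cap W^{2,2}(\tilde\phi(\Omega))$, so Lemma~\ref{Qformula} applies with $\psi=\dot\phi_0$ and gives
\[
Q_{G,\tilde\phi}\bigl[\dot T_0[u_i],u_j\bigr]=\lambda^{-1}\int_{\partial\tilde\phi(\Omega)}(\dot\phi_0\circ{\tilde\phi}^{(-1)}\,\mathbf{n}^t)\,\frac{\partial v_i}{\partial\mathbf{n}}\,\frac{\partial v_j}{\partial\mathbf{n}}\,|\mathbf{n}_G|^2\,d\sigma .
\]
Therefore $g_l'(0)$ are the eigenvalues of $-\lambda^2$ times the matrix $\bigl(Q_{G,\tilde\phi}[\dot T_0[u_i],u_j]\bigr)_{i,j}$, that is of $\bigl(-\lambda\int_{\partial\tilde\phi(\Omega)}(\dot\phi_0\circ{\tilde\phi}^{(-1)}\mathbf{n}^t)\frac{\partial v_i}{\partial\mathbf{n}}\frac{\partial v_j}{\partial\mathbf{n}}|\mathbf{n}_G|^2\,d\sigma\bigr)_{i,j}$, which is precisely \eqref{RN1}.

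The step I expect to be the main obstacle is the correct invocation of the abstract Rellich--Nagy statement in the form adapted to a self-adjoint operator whose underlying scalar product $Q_{G,\phi_\varepsilon}$ itself varies analytically with $\varepsilon$, together with the bookkeeping needed to descend from the cluster of reciprocals $\{\mu_j[\phi_\varepsilon]\}_{j\in F}$ to $\{\lambda_j[\phi_\varepsilon]\}_{j\in F}$; the remaining ingredients are just the chain rule and Lemma~\ref{Qformula}. Finally, for the last sentence of the statement, if ${\phi_\varepsilon}_{|\Omega\cap O}$ is the identity for every $\varepsilon$, then $\dot\phi_0$ vanishes on $\Omega\cap O$, hence in a neighbourhood of the degenerate set; by Remark~\ref{rem:had} the displayed identity for $Q_{G,\tilde\phi}[\dot T_0[u_i],u_j]$ then holds even without any regularity assumption on the eigenfunctions, and the argument above goes through unchanged.
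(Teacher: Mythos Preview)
Your proposal is correct and follows essentially the same route as the paper: apply the abstract Rellich--Nagy theorem of Lamberti and Lanza de Cristoforis to the real analytic family $(T_{G,\phi_\varepsilon},Q_{G,\phi_\varepsilon})$, take reciprocals to pass from the $\mu$-branches to the $\lambda$-branches, and then use Lemma~\ref{Qformula} (together with Remark~\ref{rem:had} for the final sentence) to identify the derivative matrix as \eqref{RN1}. The only discrepancy is bibliographic: the paper invokes \cite[Theorem~2.27, Corollary~2.28]{LaLa04} for the one-parameter Rellich--Nagy statement, whereas Theorem~2.30 there is the result on symmetric functions used in Theorems~\ref{thm:realanalsym} and~\ref{hadf}.
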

\begin{proof}
The proof follows by the abstract Rellich-Nagy-type theorem of 
Lamberti and Lanza di Cristoforis \cite[Theorem 2.27, Corollary 2.28]{LaLa04} applied
to the family of operators  $(T_{G,\phi_\varepsilon})_{\varepsilon \in \mathbb{R}}$ defined in \eqref{def:Tphi}, which guarantees that 
there exist an open interval $I$ containing zero and $m$ real analytic functions $f_1, \ldots, f_m$ 
from $I$ to $\mathbb{R}$  such that $\{\lambda_n[\tilde\phi]^{-1} , \ldots , \lambda_{n+m-1}[\tilde \phi]^{-1}\} = 
\{f_1(\varepsilon), \ldots, f_m(\varepsilon)\}$  for all $\varepsilon \in I$ and that, if we set 
$u_i = v_i \circ \tilde \phi$ for all $i =1,\ldots,m$,  the set $\{f_1'(0), \ldots, f_m'(0)\}$ coincides with the set of eigenvalues of the matrix
\[
 \left(Q_{G,\tilde \phi}\Big[d_{|\phi = \tilde \phi} T_{G,\phi}[\dot\phi_0][u_i],u_j\Big] \right)_{i,j=1,\ldots,m}.
\]
Then we can conclude by setting $g_i = f_i^{-1}$ for all $i=1\ldots,m$ and exploiting Lemma \ref{Qformula}.
The last part of the statement follows by the same arguments together with Remark \ref{rem:had}.
\end{proof}
We conclude this section with the following remark on the scalar product used.
\begin{remark}\label{rem:normaliz}
The above formulas are obtained assuming that 
 the orthogonality of the eigenfunctions
 is taken in $(W^{1,2}_{G,0}(\tilde \phi(\Omega)),Q_{G})$. If instead one prefers to consider $\{v_l\}_{l \in F}$ to be an orthonormal basis in 
 $L^2(\tilde \phi(\Omega))$ endowed with its standard scalar product, then formula \eqref{Nhadf1} of Theorem \ref{Nhadf} can be rewritten as 
  \begin{align*}
 d_{|\phi = \tilde \phi}(\Lambda_{F,\tau})[\psi]= \, & -\lambda_F^{\tau-1}[\tilde \phi]\binom{|F|-1}{\tau-1}  \sum_{l \in F} 
     \int_{\partial\tilde \phi(\Omega)} (\psi \circ \tilde \phi^{(-1)} \mathbf{n}^t) \left(\frac{\partial v_l}{\partial \mathbf{n}}\right)^2|\mathbf{n}_G|^2\,d\sigma  \quad \forall \psi \in L_{\Omega,O}.
  \end{align*}
  Similarly, in Theorem \ref{RN} we can choose $v_1, \ldots, v_m$ to be an orthonormal basis  in 
  $L^2(\tilde \phi(\Omega))$ and in this case the matrix \eqref{RN1} becomes 
\begin{equation}\label{RN1B}
\left( -\int_{\partial\tilde \phi(\Omega)}  (\dot\phi_0 \circ  {\tilde\phi}^{(-1)} \mathbf{n}^t)  \frac{\partial v_i}{\partial \mathbf{n}}\frac{\partial v_j}{\partial \mathbf{n}}|\mathbf{n}_G|^2\,d\sigma\right)_{i,j= 1,\ldots, m}.
\end{equation}
\end{remark}

\section{Critical shapes and overdetermined problems}\label{sec:critical}
In this section we consider the problem of studying the critical shapes for the symmetric functions of the eigenvalues 
under isovolumetric  and isoperimetric perturbations. Let $\Omega$ be a bounded open subset of $\mathbb{R}^N$. 
Let $F$ be a finite nonempty subset 
 of $\mathbb{N}$.  We set 
 \begin{align*}
&\mathcal{A}^F_\Omega := \big\{ \phi \in \mathrm{Lip}(\Omega)^N : \exists\, O \mbox{ as in } \eqref{Omega_def}
 \mbox{ such that } \phi \in \mathcal{A}^F_{\Omega,O}\big\},\\
&\Theta^F_\Omega := \big\{\phi \in  \mathcal{A}^F_{\Omega} :  \lambda_n[\phi] = \lambda_m[\phi], \,
 \forall n,m \in F\big\}.
\end{align*}
 If $\phi \in \mathcal{A}^F_\Omega$, we denote by $\mathcal{V}[\phi]$ and  $\mathcal{P}[\phi]$  the volume and the perimeter of the set $\phi(\Omega)$, respectively.
Let $\tau \in \{1,\ldots,|F|\}$. 
 Our interest 
in critical shapes mainly comes from  the study of optimization problems of the following type:
\begin{equation}\label{shapeoptv}
\max_{\mathcal{V}[\phi]=\mathrm{const.}} \Lambda_{F,\tau}[\phi] \quad \mbox{ or } \quad 
\min_{\mathcal{V}[\phi]=\mathrm{const.}} \Lambda_{F,\tau}[\phi],
\end{equation}
as well as 
\begin{equation}\label{shapeoptp}
\max_{\mathcal{P}[\phi]=\mathrm{const.}} \Lambda_{F,\tau}[\phi] \quad \mbox{ or } \quad 
\min_{\mathcal{P}[\phi]=\mathrm{const.}} \Lambda_{F,\tau}[\phi].
\end{equation}
Indeed, a first step towards the understanding of problems \eqref{shapeoptv} and \eqref{shapeoptp} is to find the critical shapes 
under volume and perimeter constraints, respectively.
\subsection{The isovolumetric problem}
First, we consider the problem of finding critical shapes under isovolumetric perturbations. 
Let $\Omega$ and $O$   be as in \eqref{Omega_def}.
The volume functional $\mathcal{V}[\cdot]$ is the map from $\mathcal{A}^F_{\Omega}$ to $\mathbb{R}$ 
defined by
\[
\mathcal{V}[\phi] := \int_{\phi(\Omega)} 1\,dz= \int_{\Omega} |\det D\phi|\,dz \qquad \forall \phi \in \mathcal{A}_{\Omega}^F.
\]
It is easily seen that $\mathcal{V}_{|\mathcal{A}^F_{\Omega,O}}$ is real analytic and that, by standard rules of calculus in Banach spaces, 
its differential at the point $\tilde \phi \in \mathcal{A}^F_{\Omega,O}$ is delivered by 
\begin{equation*}
d_{|\phi = \tilde \phi}\mathcal{V}_{|\mathcal{A}^F_{\Omega,O}}[\psi] = \int_{\tilde \phi(\Omega)}\mathrm{div}\left(\psi \circ \tilde\phi^{(-1)}\right) \,dz \qquad 
\forall \psi \in L_{\Omega,O}.
\end{equation*}
Under the assumption that $ \tilde \phi(\Omega)$ is of class $C^1$,  the above differential 
can be rewritten as 
\begin{equation}\label{diffV1}
d_{|\phi = \tilde \phi}\mathcal{V}_{|\mathcal{A}^F_{\Omega,O}}[\psi] 
= \int_{\partial \tilde \phi(\Omega)} \left(\psi \circ \tilde\phi^{(-1)}\right) \cdot \mathbf{n} \,d\sigma \qquad 
\forall \psi \in L_{\Omega,O}.
\end{equation}
For $M \in \mathopen ]0,+\infty[$  we set 
\begin{align*}
V[M] := \left\{\phi \in\mathcal{A}^F_{\Omega} : \mathcal{V}[\phi] = M  \right\}.
\end{align*}
Suppose now that a shape $\tilde \phi \in \mathcal{A}^F_{\Omega,O}$ is a maximizer  (or a minimizer) in the shape optimization problems \eqref{shapeoptv}  under the volume constraint 
$\phi \in V[M]$ among all the shapes $\phi$ in $\mathcal{A}^F_{\Omega}$. 
 Then,  for all open sets $O' \subseteq \mathbb{R}^N$ such that
 \begin{equation}\label{osubset}
  O' \subseteq O  \,\mbox{  and  }\overline{\Omega} \cap \{x=0\} \subseteq O',
 \end{equation}
$\tilde \phi$  is a maximizer  (or a minimizer)  under the volume constraint 
$\phi \in V[M]$ for all the shapes $\phi$ in $\mathcal{A}^F_{\Omega,O'}$.
 Accordingly,  for all $O' \subseteq O$ such that $\overline{\Omega} \cap \{x=0\}   \subseteq O'$, $\tilde \phi$ is a critical point for 
 ${\Lambda_{F,\tau}}_{|\mathcal{A}^F_{\Omega,O'}}$ under the volume constraint $\phi \in V[M]$, in other words:
\[
\mathrm{ker} \,d_{\phi = \tilde \phi} \mathcal{V}_{|\mathcal{A}^F_{\Omega,O'}} \subseteq \mathrm{ker}\, 
d_{\phi = \tilde \phi}{\Lambda_{F,\tau}}_{|\mathcal{A}^F_{\Omega,O'}} \qquad \forall 
 O'  \subseteq \mathbb{R}^N \mbox{ as in } \eqref{osubset}.
\]
By the Lagrange multipliers theorem, the above condition is equivalent to the fact that for all open sets
 $O'  \subseteq \mathbb{R}^N$ as in  \eqref{osubset}, there exists a constant 
$c_{O'} \in \mathbb{R}$ (a Lagrange multiplier) such that  
\begin{equation}\label{defcriticalv}
 d_{\phi = \tilde \phi} {\Lambda_{F,\tau}}_{|\mathcal{A}^F_{\Omega,O'}}[\psi] 
 + c_{O'}d_{\phi = \tilde \phi} \mathcal{V}_{|\mathcal{A}^F_{\Omega,O'}}[\psi]  = 0
\qquad \forall \psi \in L_{\Omega, O'}\, .
\end{equation}
Inspired by the above discussion, we introduce the following definition.
\begin{definition}\label{defcritv}
Let $\Omega$ be a bounded open subset of $\mathbb{R}^N$.  Let $M \in \mathopen]0, +\infty[$. 
Let $F$ be a finite nonempty subset of $\mathbb{N}$.  Let $\tau \in \{1,\ldots,|F|\}$.  Let $\tilde \phi \in \mathcal{A}^F_\Omega \cap V[M]$. We say that $\tilde \phi$ is critical for $\Lambda_{F,\tau}$ under the
volume constraint $\phi \in V[M]$ if there exists
a bounded open subset $O$ of $\mathbb{R}^N$ with $\overline{\Omega} \cap \{x=0\} \subseteq O$ such that $\tilde \phi \in \mathcal{A}^F_{\Omega,O}$ and such that for all open sets $O'  \subseteq \mathbb{R}^N$ as in \eqref{osubset} there exists $c_{O'} \in \mathbb{R}$  such that \eqref{defcriticalv} holds.
\end{definition}
In the  following proposition we prove a necessary condition for the criticality of 
shapes under isovolumetric perturbations.
\begin{theorem}\label{criticcond}
Let $\Omega$ be a bounded open subset of $\mathbb{R}^N$.
 Let $F$ be a finite nonempty subset 
 of $\mathbb{N}$ and $\tau \in \{1,\ldots,|F|\}$. 
 Let $M \in \mathopen ]0,+\infty[$.   Let $\tilde \phi \in \Theta^F_{\Omega} \cap V[M]$ and  let
 $\lambda_F[\tilde \phi]$ be the common value of all the eigenvalues $\{\lambda_j[\tilde \phi]\}_{j \in F}$. 
 Assume that  $\tilde \phi(\Omega)$ is of class $C^1$.
 Let $\{v_l\}_{l \in F}$ be an orthonormal basis in 
$(W^{1,2}_{G,0}(\tilde \phi(\Omega)),Q_{G})$
  of the eigenspace associated with $\lambda_F[\tilde \phi]$.
 If  $\tilde \phi$ is a critical shape for $\Lambda_{F,\tau}$  under the volume constraint  $\phi \in V[M]$,
then there exists a constant $c_1 \in \mathbb{R}$ such that 
\begin{equation}\label{criticcond1}
\sum_{l \in F} \left(\frac{\partial v_l}{\partial \mathbf{n}}\right)^2|\mathbf{n}_G|^2 = c_1 \qquad \mbox{ a.e. on } \partial\tilde \phi(\Omega) \setminus \{x=0\}.
\end{equation}
\end{theorem}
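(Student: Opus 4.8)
The plan is to feed the criticality condition \eqref{defcriticalv} with the Hadamard-type formula \eqref{Nhadf1} and with the shape derivative \eqref{diffV1} of the volume, and then to let the test field range over a rich enough class of perturbations supported away from the degenerate set. More precisely, since $\tilde\phi$ is critical, Definition \ref{defcritv} provides a bounded open set $O$ with $\overline\Omega\cap\{x=0\}\subseteq O$, $\tilde\phi\in\mathcal{A}^F_{\Omega,O}$, and, for every open $O'$ as in \eqref{osubset}, a Lagrange multiplier $c_{O'}\in\mathbb{R}$ satisfying \eqref{defcriticalv}. I would fix one such $O'$ and restrict \eqref{defcriticalv} to those $\psi\in L_{\Omega,O'}$ with $\psi_{|\Omega\cap O'}=0$; for such $\psi$ the push-forward $\omega:=\psi\circ\tilde\phi^{(-1)}$ (recall $\tilde\phi$ is a bi-Lipschitz homeomorphism, cf.\ Lemma \ref{comphomeo}) vanishes on an open subset of $\tilde\phi(\Omega)$ that contains $\overline{\tilde\phi(\Omega)}\cap\{x=0\}$, so by Remark \ref{rem:had} formula \eqref{Nhadf1} of Theorem \ref{Nhadf} holds \emph{without} the $W^{2,2}$ regularity assumption. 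Combining \eqref{Nhadf1}, \eqref{diffV1} and \eqref{defcriticalv} then turns the criticality condition, for these $\psi$, into
\[
\int_{\partial\tilde\phi(\Omega)}(\omega\cdot\mathbf{n})\,\Bigl[\,c_{O'}-\lambda_F^{\tau}[\tilde\phi]\tbinom{|F|-1}{\tau-1}\sum_{l\in F}\Bigl(\tfrac{\partial v_l}{\partial\mathbf{n}}\Bigr)^{2}|\mathbf{n}_G|^{2}\,\Bigr]\,d\sigma=0 .
\]

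Next I would argue that, as $\psi$ runs over $L_{\Omega,O'}$ with $\psi_{|\Omega\cap O'}=0$, the normal trace $\omega\cdot\mathbf{n}$ runs over all of $C_c^\infty\bigl(\partial\tilde\phi(\Omega)\setminus\overline{\tilde\phi(\Omega\cap O')}\bigr)$: given such a $g$, a standard collar/extension construction yields a Lipschitz vector field $\omega$ on $\overline{\tilde\phi(\Omega)}$ with $\omega\cdot\mathbf{n}=g$ on $\partial\tilde\phi(\Omega)$ and $\omega\equiv 0$ near $\overline{\tilde\phi(\Omega\cap O')}$, and then $\psi:=\omega\circ\tilde\phi$ lies in $L_{\Omega,O'}$ because it vanishes near $\Omega\cap O'$, so the split-structure requirement in the definition of $L_{\Omega,O'}$ is vacuous there. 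By the fundamental lemma of the calculus of variations the bracket in the displayed identity vanishes $\sigma$-a.e.\ on $\partial\tilde\phi(\Omega)\setminus\overline{\tilde\phi(\Omega\cap O')}$, i.e.
\[
\sum_{l\in F}\Bigl(\tfrac{\partial v_l}{\partial\mathbf{n}}\Bigr)^{2}|\mathbf{n}_G|^{2}=\frac{c_{O'}}{\lambda_F^{\tau}[\tilde\phi]\binom{|F|-1}{\tau-1}}\qquad\text{a.e.\ on }\partial\tilde\phi(\Omega)\setminus\overline{\tilde\phi(\Omega\cap O')},
\]
the denominator being nonzero since $\lambda_F[\tilde\phi]>0$ and $1\le\tau\le|F|$.

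Finally I would let $O'$ shrink: taking a decreasing sequence $O'_n$ as in \eqref{osubset} with $\bigcap_n\overline{O'_n}\subseteq\{x=0\}$ (e.g.\ $O'_n$ the intersection of $O$ with a $1/n$-neighbourhood of $\overline\Omega\cap\{x=0\}$), the open sets $\partial\tilde\phi(\Omega)\setminus\overline{\tilde\phi(\Omega\cap O'_n)}$ increase and exhaust $\partial\tilde\phi(\Omega)\setminus\{x=0\}$ up to a $\sigma$-negligible set (and if this set is itself $\sigma$-null there is nothing to prove); on the nonempty overlaps the right-hand constants above must coincide, so they are all equal to a single $c_1\in\mathbb{R}$, which gives \eqref{criticcond1}. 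The only genuinely delicate point I expect is the surjectivity of the normal-trace map onto a sufficiently large class while keeping the perturbation admissible (in $L_{\Omega,O'}$) \emph{and} supported away from the characteristic set, so that Remark \ref{rem:had} remains applicable; this is precisely where the freedom to shrink $O'$ built into Definition \ref{defcritv} is essential, since it is what allows one to localise near an arbitrary point of $\partial\tilde\phi(\Omega)\setminus\{x=0\}$.
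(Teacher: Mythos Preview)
Your proposal is correct and follows essentially the same route as the paper: restrict the Lagrange condition \eqref{defcriticalv} to perturbations $\psi$ vanishing on $\Omega\cap O'$ so that Remark \ref{rem:had} makes the boundary Hadamard formula \eqref{Nhadf1} available without extra regularity, combine it with \eqref{diffV1}, and apply the fundamental lemma. The only organisational difference is in how the constants $c_{O'}$ are identified: the paper first observes the inclusion $L^0_{\Omega,O}\subseteq L^0_{\Omega,O'}$ (since $\psi\equiv 0$ on $\Omega\cap O\supseteq\Omega\cap O'$ trivially satisfies the split-structure constraint) to conclude directly that all $c_{O'}$ equal $c_O$, and \emph{then} invokes the fundamental lemma on the union $\bigcup_{O'}L^0_{\Omega,O'}$; you instead derive the pointwise identity for each $O'$ separately and match the constants a posteriori on overlaps. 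Both arguments are valid and yield the same conclusion.
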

\begin{proof}
Let $\tilde \phi$ be a critical shape for $\Lambda_{F,\tau}$ under the volume constraint 
$\tilde \phi \in V[M]$ and let  $O \subseteq \mathbb{R}^N$ be as in Definition \ref{defcritv}. For  $O' \subseteq \mathbb{R}^N$  as in \eqref{osubset} we set 
\[
 L_{\Omega,O'}^0 := \{ \psi \in L_{\Omega,O'}: \psi_{|\Omega \cap O'}=0\}.
\]
Then, by Theorem \ref{Nhadf} and Remark \ref{rem:had}, we have that
\begin{align}\label{criticcond2}
 d_{|\phi = \tilde \phi}(\Lambda_{F,\tau})[\psi]= \, & -\lambda_F^{\tau}[\tilde \phi]\binom{|F|-1}{\tau-1}  \sum_{l \in F} 
     \int_{\partial\tilde \phi(\Omega)} (\psi \circ \tilde \phi^{(-1)} \mathbf{n}^t) \left(\frac{\partial v_l}{\partial \mathbf{n}}\right)^2|\mathbf{n}_G|^2\,d\sigma \quad \forall \psi \in  L_{\Omega,O'}^0.
  \end{align}
Thus, formula \eqref{criticcond2}, formula \eqref{diffV1} for the differential of the volume functional, and Definition \ref{defcritv} imply that for  all  open sets
 $O' \subseteq \mathbb{R}^N$  as in \eqref{osubset} there exists a constant $c_{O'}\in \mathbb{R}$ such that 
 \begin{align*}
  \sum_{l \in F} 
     \int_{\partial\tilde \phi(\Omega)} \left(\psi \circ \tilde \phi^{(-1)}\right) \cdot \mathbf{n} \left(\frac{\partial v_l}{\partial \mathbf{n}}\right)^2|\mathbf{n}_G|^2\,d\sigma  + 
     c_{O'}  \int_{\partial \tilde \phi(\Omega)} &\left(\psi \circ \tilde \phi^{(-1)}\right) \cdot \mathbf{n} \,d\sigma=0
    \qquad \forall \psi \in  L_{\Omega,O'}^0\, .
     \end{align*}
  On the other hand, if  $O'  \subseteq \mathbb{R}^N$ is as in \eqref{osubset}, then 
  $L_{\Omega,O}^0 \subseteq L_{\Omega,O'}^0$.
  Hence,  $c_{O'}=c_{O}$. That is 
  \begin{align}\label{criticcond3}
  \sum_{l \in F} 
     \int_{\partial\tilde \phi(\Omega)} \left(\psi \circ \tilde \phi^{(-1)}\right) \cdot \mathbf{n} \left(\frac{\partial v_l}{\partial \mathbf{n}}\right)^2|\mathbf{n}_G|^2\,d\sigma  + 
     c_{O}  \int_{\partial \tilde \phi(\Omega)} &\left(\psi \circ \tilde \phi^{(-1)}\right) \cdot \mathbf{n} \,d\sigma=0
    \qquad \forall \psi \in L_{\Omega,O'}^0\, .
     \end{align}
 By the Fundamental Lemma of Calculus of Variations one can realize that \eqref{criticcond3} implies that there exists 
a constant $c_1 \in \mathbb{R}$ such that \eqref{criticcond1} holds.
\end{proof}
\begin{remark}\label{rem:criticcond}
If one assume that the eigenfunctions are of class $W^{1,2}_{0}(\tilde \phi(\Omega)) \cap W^{2,2}(\tilde \phi(\Omega))$, then it is easily seen that condition  \eqref{criticcond1} becomes also sufficient for the criticality of shapes under isovolumetric perturbations. Moreover, if the $(N-1)$-dimensional measure of $\partial\tilde \phi(\Omega) \cap \{x=0\}$ is zero, {\it i.e.} 
$\left|\partial\tilde \phi(\Omega) \cap \{x=0\}\right|_{N-1}=0$,
then \eqref{criticcond1} can be rewritten  as 
\begin{equation*}
\sum_{l \in F} \left(\frac{\partial v_l}{\partial \mathbf{n}}\right)^2|\mathbf{n}_G|^2 = c_1 \qquad \mbox{ a.e. on } \partial\tilde \phi(\Omega).
\end{equation*}
We note that $\left|\partial\tilde \phi(\Omega) \cap \{x=0\}\right|_{N-1}=0$ is always verified when 
$\mathrm{dim}\{x=0\} = k < N-1$.
\end{remark}
The previous theorem suggests considering the overdetermined system
\begin{equation}\label{overdetv}
\begin{cases}
-\Delta_G u_l = \lambda_j u_l \quad &\mbox{ in } \Omega, \,\forall l \in \{1,\ldots,m\},\\
u_l = 0 \quad &\mbox{ on } \partial \Omega,  \,\forall l \in \{1,\ldots,m\}, \\
\sum_{l =1}^m \left(\frac{\partial u_l}{\partial \mathbf{n}}\right)^2|\mathbf{n}_G|^2 =   \mbox{const.} \quad &\mbox{ on } \partial \Omega.
\end{cases}
\end{equation}
Here, $\lambda_j$ is the $j$-th eigenvalue which has multiplicity 
$m \in \mathbb{N}$ and  $u_1,\ldots, u_m$ 
is a corresponding orthonormal basis of eigenvalues in $W^{1,2}_{G,0}(\Omega)$ such that the last condition 
of system \eqref{overdetv} makes sense (for example $\{u_l\}_{l =1,\ldots,m} \subseteq W^{1,2}_0(\Omega) \cap 
W^{2,2}(\Omega)$ when $\Omega$ is of class $C^1$).
System \eqref{overdetv} is the Grushin analog 
of the well-known overdetermined system for the Laplace operator:
\begin{equation}\label{overdetLap}
\begin{cases}
-\Delta u_l = \lambda_j u_l \quad &\mbox{ in } \Omega, \,\forall l \in \{1,\ldots,m\},\\
u_l = 0 \quad &\mbox{ on } \partial \Omega,  \,\forall l \in \{1,\ldots,m\}, \\
\sum_{l =1}^m \left(\frac{\partial u_l}{\partial \mathbf{n}}\right)^2 = \mbox{const.} \quad &\mbox{ on } \partial \Omega.
\end{cases}
\end{equation}
It is known that  system \eqref{overdetLap} is satisfied when $\Omega$ is a ball (see Lamberti and Lanza 
de Cristoforis \cite{LaLa06}). Moreover, if $\Omega$ is connected and $j=1$ (and then $m=1$),    problem \eqref{overdetLap} is satisfied 
if and only if $\Omega$ is a ball (see Henry \cite{He82}).

  It would be of great interest characterizing those 
bounded domains such that system \eqref{overdetv} is
 satisfied or, at least, find some shapes for which  
it is satisfied. These problems, to the best of the authors' knowledge, are open.
\subsection{The isoperimetric problem}
Next, we switch to consider the isoperimetric problem. In this section we assume that $\Omega$ is a bounded open subset of $\mathbb{R}^N$ of class $C^{2}$. Let $O$  be as in \eqref{Omega_def}.  We set 
 \begin{align*}
&L^*_{\Omega,O} := L_{\Omega,O}  \cap C^{2}\left(\overline{\Omega}, \mathbb{R}^N\right)  
&&{\mathcal{A}^*}^F_{\Omega,O} := \mathcal{A}^F_{\Omega,O} \cap C^{2}\left(\overline{\Omega}, \mathbb{R}^N\right)\\
 &{\mathcal{A}^*}^F_\Omega:= \mathcal{A}^F_\Omega \cap C^{2}\left(\overline{\Omega}, \mathbb{R}^N\right) &&{\Theta^*}^F_\Omega :=  \Theta^F_\Omega \cap C^{2}\left(\overline{\Omega}, \mathbb{R}^N\right).
\end{align*}
The set $L^*_{\Omega,O}$ is a Banach subspace of
$C^{2}\left(\overline{\Omega}, \mathbb{R}^N\right)$ and  ${\mathcal{A}^*}^F_{\Omega,O}$
is open in $L^*_{\Omega,O}$.  The perimeter functional $\mathcal{P}[\cdot]$ is the map from ${\mathcal{A}^*}^F_{\Omega}$ to $\mathbb{R}$ 
defined by
\[
\mathcal{P}[\phi] := \int_{\partial\phi(\Omega)} 1\,d\sigma =
  \int_{\partial\Omega} \left|\mathbf{n} (D\phi)^{-1} \right| |\det D\phi|\,d\sigma  \qquad \forall \phi \in {\mathcal{A}^*}_{\Omega}^F.
\]
The map $\mathcal{P}_{|{\mathcal{A}^*}^F_{\Omega,O}}$ is real analytic and its differential at a point 
$\tilde \phi \in {\mathcal{A}^*}^F_{\Omega,O}$  is delivered by 
\begin{equation*}
d_{|\phi = \tilde \phi}\mathcal{P}_{|{\mathcal{A}^*}^F_{\Omega,O}}[\psi] = \int_{\partial \tilde \phi(\Omega)} 
\left(\psi \circ \tilde\phi^{(-1)}\right) \cdot \mathbf{n} \,\mathcal{H}\,d\sigma \qquad 
\forall \psi \in L^*_{\Omega,O},
\end{equation*}
where $\mathcal{H} = \mathrm{div}\, \mathbf{n}$ denotes the mean curvature of $\partial \tilde \phi(\Omega)$
(see \cite{La14}). For $M \in \mathopen ]0,+\infty[$  we set 
\begin{align*}
P[M] := \left\{\phi \in{\mathcal{A}^*}^F_{\Omega} : \mathcal{P}[\phi] = M  \right\}.
\end{align*}
Motivated by the isoperimetric optimization problems \eqref{shapeoptp}, we introduce the following definition. 
\begin{definition}
Let $\Omega$ be a bounded open subset of $\mathbb{R}^N$ of class $C^{2}$. 
Let $M \in \mathopen]0, +\infty[$. 
Let $F$ be a finite nonempty subset of $\mathbb{N}$.  Let $\tau \in \{1,\ldots,|F|\}$. 
Let $\tilde \phi \in {\mathcal{A}^*}^F_\Omega \cap P[M]$. We say that $\phi$ is critical for $\Lambda_{F,\tau}$ under the
 perimeter constraint $\phi \in P[M]$ if there exists
a bounded open subset $O$ of $\mathbb{R}^N$ with $\overline{\Omega} \cap \{x=0\} \subseteq O$ such that $\tilde \phi \in {\mathcal{A}^*}^F_{\Omega,O}$ and such that for all open sets $O'  \subseteq \mathbb{R}^N$ as in \eqref{osubset} there exists $c_{O'} \in \mathbb{R}$  such that 
\begin{equation*}
 d_{\phi = \tilde \phi} {\Lambda_{F,\tau}}_{|{\mathcal{A}^*}^F_{\Omega,O'}}[\psi] 
 + c_{O'}d_{\phi = \tilde \phi} \mathcal{P}_{|{\mathcal{A}^*}^F_{\Omega,O'}}[\psi]  = 0
\qquad \forall \psi \in L_{\Omega, O'}^*\,.
\end{equation*}
\end{definition}
Following the lines of the previous section, we are able to prove the following necessary condition for critical 
shapes under isoperimetric perturbations.
\begin{theorem}\label{criticcondp}
Let $\Omega$ be a bounded open subset of $\mathbb{R}^N$ of class $C^{2}$.
 Let $F$ be a finite nonempty subset 
 of $\mathbb{N}$ and $\tau \in \{1,\ldots,|F|\}$. 
 Let $M \in \mathopen ]0,+\infty[$.   Let $\tilde \phi \in {\Theta^*}^F_{\Omega} \cap P[M]$ and  let
 $\lambda_F[\tilde \phi]$ be the common value of all the eigenvalues $\{\lambda_j[\tilde \phi]\}_{j \in F}$. 
 Let $\{v_l\}_{l \in F}$ be an orthonormal basis in 
 $(W^{1,2}_{G,0}(\tilde \phi(\Omega)),Q_{G})$
  of the eigenspace associated with $\lambda_F[\tilde \phi]$. 
 If $\tilde \phi$ is a critical shape for $\Lambda_{F,\tau}$  under the perimeter constraint $\phi \in P[M]$, then
 there exists a constant $c_2 \in \mathbb{R}$ such that
\begin{equation}\label{criticcondp1}
\sum_{l \in F} \left(\frac{\partial v_l}{\partial \mathbf{n}}\right)^2|\mathbf{n}_G|^2= c_2\mathcal{H} \qquad \mbox{ a.e. on } \partial\tilde \phi(\Omega) \setminus \{x=0\}.
\end{equation}
\end{theorem}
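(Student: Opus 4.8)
The strategy mirrors exactly the proof of Theorem \ref{criticcond}, replacing the differential of the volume functional by that of the perimeter functional. First I would fix a bounded open set $O$ as in Definition \ref{defcritv} (the perimeter version) such that $\tilde\phi \in {\mathcal{A}^*}^F_{\Omega,O}$, and for every admissible $O'$ as in \eqref{osubset} set $L^{*,0}_{\Omega,O'} := \{\psi \in L^*_{\Omega,O'} : \psi_{|\Omega\cap O'} = 0\}$. On this subspace any $\psi$ vanishes near the degenerate set $\{x=0\}$, so Remark \ref{rem:had} applies and the Hadamard formula \eqref{Nhadf1} of Theorem \ref{Nhadf} holds for such $\psi$ without the $W^{2,2}$ hypothesis on the eigenfunctions. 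Thus for all $\psi \in L^{*,0}_{\Omega,O'}$,
\[
d_{|\phi=\tilde\phi}(\Lambda_{F,\tau})[\psi] = -\lambda_F^{\tau}[\tilde\phi]\binom{|F|-1}{\tau-1}\sum_{l\in F}\int_{\partial\tilde\phi(\Omega)}(\psi\circ\tilde\phi^{(-1)}\,\mathbf{n}^t)\left(\frac{\partial v_l}{\partial\mathbf{n}}\right)^2|\mathbf{n}_G|^2\,d\sigma.
\]

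Next I would combine this with the formula for $d_{|\phi=\tilde\phi}\mathcal{P}_{|{\mathcal{A}^*}^F_{\Omega,O'}}[\psi] = \int_{\partial\tilde\phi(\Omega)}(\psi\circ\tilde\phi^{(-1)})\cdot\mathbf{n}\,\mathcal{H}\,d\sigma$ recalled in the subsection, and with the criticality condition from the definition: for each $O'$ there is $c_{O'}\in\mathbb{R}$ with $d_{|\phi=\tilde\phi}{\Lambda_{F,\tau}}_{|{\mathcal{A}^*}^F_{\Omega,O'}}[\psi] + c_{O'}d_{|\phi=\tilde\phi}\mathcal{P}_{|{\mathcal{A}^*}^F_{\Omega,O'}}[\psi] = 0$ for all $\psi \in L^*_{\Omega,O'}$, hence in particular for all $\psi\in L^{*,0}_{\Omega,O'}$. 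This yields
\[
\sum_{l\in F}\int_{\partial\tilde\phi(\Omega)}(\psi\circ\tilde\phi^{(-1)})\cdot\mathbf{n}\left(\frac{\partial v_l}{\partial\mathbf{n}}\right)^2|\mathbf{n}_G|^2\,d\sigma + c_{O'}\int_{\partial\tilde\phi(\Omega)}(\psi\circ\tilde\phi^{(-1)})\cdot\mathbf{n}\,\mathcal{H}\,d\sigma = 0
\]
for all $\psi\in L^{*,0}_{\Omega,O'}$, where I have absorbed the nonzero constant $-\lambda_F^{\tau}[\tilde\phi]\binom{|F|-1}{\tau-1}$ into $c_{O'}$. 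As in Theorem \ref{criticcond}, since $O'\subseteq O$ forces $L^{*,0}_{\Omega,O}\subseteq L^{*,0}_{\Omega,O'}$, the constant $c_{O'}$ must be independent of $O'$, say $c_{O'}=c_O$. Then, letting the support of $\psi\circ\tilde\phi^{(-1)}$ sweep out an arbitrary compactly supported vector field on $\partial\tilde\phi(\Omega)\setminus\{x=0\}$ (which is possible precisely because $\psi$ is free away from $\Omega\cap O$, and $\overline\Omega\cap\{x=0\}\subseteq O$), the Fundamental Lemma of the Calculus of Variations gives a constant $c_2\in\mathbb{R}$ with $\sum_{l\in F}(\partial v_l/\partial\mathbf{n})^2|\mathbf{n}_G|^2 = c_2\mathcal{H}$ a.e.\ on $\partial\tilde\phi(\Omega)\setminus\{x=0\}$.

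The only genuinely delicate point — and the one I would treat with care — is the interplay between the $C^2$ regularity needed to make sense of the perimeter differential and the normal derivatives of $v_l$, and the fact that the Hadamard formula only holds on the degeneracy-avoiding subspace: one must check that the test fields $\psi\in L^{*,0}_{\Omega,O'}$ still generate enough normal traces on $\partial\tilde\phi(\Omega)\setminus\{x=0\}$ to conclude pointwise a.e., and that the boundary integrands $(\partial v_l/\partial\mathbf{n})^2|\mathbf{n}_G|^2$ and $\mathcal{H}$ are in $L^1_{\mathrm{loc}}$ of that set so that the variational lemma applies. Since $\tilde\phi(\Omega)$ is of class $C^2$ here and $\{x=0\}$ has empty interior in $\partial\tilde\phi(\Omega)$ away from characteristic points, this is a routine localization argument, entirely parallel to the isovolumetric case; the rest of the proof is a mechanical transcription of Theorem \ref{criticcond} with $1$ replaced by $\mathcal{H}$ in the constraint differential.
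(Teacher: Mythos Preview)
Your proposal is correct and follows exactly the approach indicated by the paper, which simply states that Theorem \ref{criticcondp} is proved ``following the lines of the previous section'' (i.e., the proof of Theorem \ref{criticcond}) with the perimeter differential $\int_{\partial\tilde\phi(\Omega)}(\psi\circ\tilde\phi^{(-1)})\cdot\mathbf{n}\,\mathcal{H}\,d\sigma$ in place of the volume one. Your use of the subspace $L^{*,0}_{\Omega,O'}$, the appeal to Remark \ref{rem:had} for the Hadamard formula, the reduction $c_{O'}=c_O$ via the inclusion $L^{*,0}_{\Omega,O}\subseteq L^{*,0}_{\Omega,O'}$, and the final application of the Fundamental Lemma of the Calculus of Variations all mirror the paper's argument precisely.
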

\begin{remark}
As in Remark \ref{rem:criticcond}, if one assume that the eigenfunctions are of class $W^{1,2}_{0}(\tilde \phi(\Omega)) \cap W^{2,2}(\tilde \phi(\Omega))$, then condition  \eqref{criticcondp1} becomes also sufficient for the criticality of shapes under isoperimetric perturbations. 
Also, when $\left|\partial\tilde \phi(\Omega) \cap \{x=0\}\right|_{N-1}=0$, the same considerations as in Remark \ref{rem:criticcond} can be done.
\end{remark}
As before, Theorem \ref{criticcondp} suggests that it would be of interest to study the overdetermined system
\begin{equation*}
\begin{cases}
-\Delta_G u_l = \lambda_j u_l \quad &\mbox{ in } \Omega, \,\forall l \in \{1,\ldots,m\},\\
u_l = 0 \quad &\mbox{ on } \partial \Omega,  \,\forall l \in \{1,\ldots,m\}, \\
\sum_{l =1}^m \left(\frac{\partial u_l}{\partial \mathbf{n}}\right)^2|\mathbf{n}_G|^2 =  c_2 \mathcal{H}
 \quad &\mbox{ on } \partial \Omega,
\end{cases}
\end{equation*}
for some constant $c_2 \in \mathbb{R}$.

\section{The Rellich-Pohozaev identity for the Grushin Laplacian}\label{sec:rellich}

The aim of this section is to collect a new proof of the Rellich-Pohozaev identity for the Grushin Laplacian. 
 Let $\Omega$ be a bounded open subset of $\mathbb{R}^N$ of class 
$C^1$. Let 
$\lambda$ be an eigenvalue of the Dirichlet Grushin Laplacian, {\it i.e.} an eigenvalue of \eqref{wsp}. Let 
$u$ be an eigenfunction in $W^{1,2}_{G,0}(\Omega)$ corresponding to $\lambda$ normalized with $\|u\|_{L^2(\Omega)}=1$. Suppose that $u \in W^{1,2}_0(\Omega) \cap W^{2,2}(\Omega)$. Then 
the Rellich-Pohozaev identity reads 
\begin{equation}\label{eq:rp}
\lambda = \frac{1}{2}\int_{\partial\Omega}
\left(\frac{\partial u}{\partial \mathbf{n}}\right)^2|\mathbf{n}_G|^2((x,(1+s)y) \cdot \mathbf{n})\,d\sigma_z.
\end{equation}
This identity is a consequence of a more general class of Pohozev-type identities (see, {\it e.g.},  Tri \cite{Tr98A, Tr98B} for $N=2$, Kogoj 
and Lanconelli \cite[Section 2]{KoLa12} for arbitrary $N$). We also mention Garofalo and Lanconelli \cite{GaLa92} for a Pohozaev-type identity for the Heisenberg Laplacian.
 A proof of \eqref{eq:rp}  can be done following the classical 
argument that Rellich  used for the standard Laplacian in \cite{Re40}.  This argument  is rather elementary being  based only on  integration by parts, but requires some lengthy computations.
Instead, our proof is a straightforward application of the Hadamard-type formula that we have proved. More precisely, our strategy is  first to differentiate the eigenvalue with respect to the natural dilation in the Grushin setting, and then to match this derivative with the one computed by \eqref{Nhadf1}.
The same  strategy was exploited by di Blasio and Lamberti \cite[Theorem 5.1]{LaDi19} for the 
Finsler $p$-Laplacian.

 The natural dilation in the Grushin setting is:
\[
\delta_{t}(z) : = (tx,t^{1+s}y) \qquad \forall z=(x,y) \in \mathbb{R}^N, \forall t >0.
\]
We fix  $\Omega$ to be a bounded open subset of $\mathbb{R}^N$ of class $C^1$. We set 
\[
\Omega_t := \delta_{t}(\Omega) \qquad \forall  t>0.
\]
 Let $\lambda = \lambda_n[\Omega] = \cdots = \lambda_{n+m-1}[\Omega]$ be a Dirichlet Grushin eigenvalue on $\Omega$ of multiplicity $m$.
It is easily seen that 
\begin{equation}\label{eigenrescale}
t^2\lambda_{n+l}[\Omega_t] =\lambda \qquad \forall t>0,	\,\forall l = 0,\ldots,m-1\, .
\end{equation}
This can be deduced from the fact that if $l = 0,\ldots,m-1$ and  $u$ is an eigenfunction   in  $W^{1,2}_{G,0}(\Omega_t)$ corresponding to $\lambda_{n+l}[\Omega_t]$, then we have
\begin{align*}
\int_{\Omega}\nabla_G( u(\delta_t))& \cdot\nabla_G(\varphi(\delta_t)) \,dz \\
=&  \int_{\Omega}(\nabla( u(\delta_t))I_G(z)) \cdot (\nabla(\varphi(\delta_t))I_G(z)) \,dz\\
=&\int_{\Omega}\nabla_x( u(\delta_t))\cdot \nabla_x(\varphi(\delta_t))  \,dz +
\int_{\Omega}|x|^{2s}\nabla_y( u(\delta_t))\cdot \nabla_y(\varphi(\delta_t)) \,dz \\
=&\int_{\Omega}t^2\nabla_xu(\delta_t)\cdot \nabla_x\varphi(\delta_t) \,dz +
\int_{\Omega}|x|^{2s}t^{2+2s}\nabla_y u(\delta_t)\cdot \nabla_y\varphi(\delta_t) \,dz\\
=&\int_{\Omega_t}t^2\nabla_xu\cdot \nabla_x\varphi t^{-h-(1+s)k} \,dz +
\int_{\Omega_t}|x|^{2s}t^{2}\nabla_y u\cdot \nabla_y\varphi t^{-h-(1+s)k}\,dz\\
=&\int_{\Omega_t}t^2\nabla_Gu\cdot \nabla_G\varphi t^{-h-(1+s)k}\,dz\\
=&t^2\lambda_{n+l}[\Omega_t]\int_{\Omega_t} u\varphi t^{-h-(1+s)k}\,dz\\
=&t^2\lambda_{n+l}[\Omega_t]\int_{\Omega} u(\delta_t)\varphi(\delta_t)\,dz \qquad \forall 
\varphi \in W_{G,0}^{1,2}(\Omega_t).
\end{align*}
Accordingly, by \eqref{eigenrescale} with $t=1+\varepsilon$, we have
\[
\lambda_{n+l}[\Omega_{1+\varepsilon}] =(1+\varepsilon)^{-2}\lambda \qquad \forall \varepsilon>-1,\,	\forall l = 0,\ldots,m-1\,  .
\]
Therefore, $\varepsilon \mapsto \lambda_{n}[\Omega_{1+\varepsilon}]$ is differentiable in $\mathopen]-1,+\infty[$ and we have
\[
\frac{d}{d\varepsilon}\lambda_{n}[\Omega_{1+\varepsilon}]\bigg|_{\varepsilon=0} = -2\lambda.
\] 
On the other side, we can also compute the derivative  $\frac{d}{d\varepsilon}\lambda_n[\Omega_{1+\varepsilon}]\Big|_{\varepsilon=0}$ exploiting  our results. Let $u$ be an 
eigenfunction corresponding to $\lambda$ normalized such that $\|u\|_{L^2(\Omega)}=1$, and assume that
$u \in W^{1,2}_0(\Omega) \cap W^{2,2}(\Omega)$.  We note that  if $O$ is any bounded open subset of 
$\mathbb{R}^N$ containing $\overline \Omega \cap \{x=0\}$, then
$\delta_{1+\varepsilon} \in \mathcal{A}_{\Omega, O}$ for all  $\varepsilon > -1$. We can apply Theorem \ref{RN} 
and Remark \ref{rem:normaliz} to the family $\{\delta_{1+\varepsilon}\}$ and 
 obtain that the eigenvalues of the matrix $\eqref{RN1B}$  are the derivatives at $\varepsilon = 0$ of the branches 
splitting from $\lambda$. As we have already seen above, the domain perturbation 
$\delta_{1+\varepsilon}$ preserves the multiplicity of the eigenvalue $\lambda$ and accordingly the matrix 
$\eqref{RN1B}$ is actually a scalar matrix.
 Thus
\begin{align*}
\frac{d}{d\varepsilon}\lambda_n[\Omega_{1+\varepsilon}]\bigg|_{\varepsilon=0} =&  - \int_{\partial\Omega}
  \left(\frac{\partial u}{\partial \mathbf{n}}\right)^2|\mathbf{n}_G|^2((x,(1+s)y) \cdot \mathbf{n})\,d\sigma_z.
\end{align*}
By the above two expressions of the derivative of the the eigenvalue we get the Rellich-Pohozaev identity:
\[
\lambda = \frac{1}{2}\int_{\partial\Omega}
  \left(\frac{\partial u}{\partial \mathbf{n}}\right)^2|\mathbf{n}_G|^2((x,(1+s)y) \cdot \mathbf{n})\,d\sigma_z.
\]

\section*{Acknowledgments} The authors are members of the `Gruppo Nazionale per l'Analisi Matematica, la Probabilit\`a e le loro Applicazioni' (GNAMPA) of the `Istituto Nazionale di Alta Matematica' (INdAM) and acknowledge the support of the Project BIRD191739/19 `Sensitivity analysis of partial differential equations in
the mathematical theory of electromagnetism' of the University of Padova.  
P.~Luzzini and P.~Musolino acknowledge the support of the  `INdAM GNAMPA Project 2020 - Analisi e ottimizzazione asintotica per autovalori in domini con piccoli buchi'. P.~Musolino also acknowledges the support of  the grant `Challenges in Asymptotic and Shape Analysis - CASA'  of the Ca' Foscari University of Venice.  The authors are very thankful to Prof. Enrique Zuazua for bringing to their attention the method which allows to deduce Rellich-type identities from Hadamard-type formulas.

\end{document}